\newcommand{\tun}{\begin{picture}(5,0)(-2,-1)
\put(0,0){\circle*{2}}
\end{picture}}
\newcommand{\tdeux}{\begin{picture}(7,7)(0,-1)
\put(3,0){\circle*{2}}
\put(3,0){\line(0,1){5}}
\put(3,5){\circle*{2}}
\end{picture}}
\newcommand{\ttroisun}{\begin{picture}(15,8)(-5,-1)
\put(3,0){\circle*{2}}
\put(-0.65,0){$\vee$}
\put(6,7){\circle*{2}}
\put(0,7){\circle*{2}}
\end{picture}}
\newcommand{\ttroisdeux}{\begin{picture}(5,12)(-2,-1)
\put(0,0){\circle*{2}}
\put(0,0){\line(0,1){5}}
\put(0,5){\circle*{2}}
\put(0,5){\line(0,1){5}}
\put(0,10){\circle*{2}}
\end{picture}}
\newcommand{\tquatreun}{\begin{picture}(15,12)(-5,-1)
\put(3,0){\circle*{2}}
\put(-0.65,0){$\vee$}
\put(6,7){\circle*{2}}
\put(0,7){\circle*{2}}
\put(3,7){\circle*{2}}
\put(3,0){\line(0,1){7}}
\end{picture}}
\newcommand{\tquatredeux}{\begin{picture}(15,18)(-5,-1)
\put(3,0){\circle*{2}}
\put(-0.65,0){$\vee$}
\put(6,7){\circle*{2}}
\put(0,7){\circle*{2}}
\put(0,14){\circle*{2}}
\put(0,7){\line(0,1){7}}
\end{picture}}
\newcommand{\tquatretrois}{\begin{picture}(15,18)(-5,-1)
\put(3,0){\circle*{2}}
\put(-0.65,0){$\vee$}
\put(6,7){\circle*{2}}
\put(0,7){\circle*{2}}
\put(6,14){\circle*{2}}
\put(6,7){\line(0,1){7}}
\end{picture}}
\newcommand{\tquatrequatre}{\begin{picture}(15,18)(-5,-1)
\put(3,5){\circle*{2}}
\put(-0.65,5){$\vee$}
\put(6,12){\circle*{2}}
\put(0,12){\circle*{2}}
\put(3,0){\circle*{2}}
\put(3,0){\line(0,1){5}}
\end{picture}}
\newcommand{\tquatrecinq}{\begin{picture}(9,19)(-2,-1)
\put(0,0){\circle*{2}}
\put(0,0){\line(0,1){5}}
\put(0,5){\circle*{2}}
\put(0,5){\line(0,1){5}}
\put(0,10){\circle*{2}}
\put(0,10){\line(0,1){5}}
\put(0,15){\circle*{2}}
\end{picture}}
\newcommand{\tdun}[1]{\begin{picture}(10,5)(-2,-1)
\put(0,0){\circle*{2}}
\put(3,-2){\tiny #1}
\end{picture}}
\newcommand{\tddeux}[2]{\begin{picture}(12,5)(0,-1)
\put(3,0){\circle*{2}}
\put(3,0){\line(0,1){5}}
\put(3,5){\circle*{2}}
\put(6,-2){\tiny #1}
\put(6,3){\tiny #2}
\end{picture}}
\newcommand{\tdtroisun}[3]{\begin{picture}(20,12)(-5,-1)
\put(3,0){\circle*{2}}
\put(-0.65,0){$\vee$}
\put(6,7){\circle*{2}}
\put(0,7){\circle*{2}}
\put(5,-2){\tiny #1}
\put(9,5){\tiny #2}
\put(-5,5){\tiny #3}
\end{picture}}
\newcommand{\tdtroisdeux}[3]{\begin{picture}(12,14)(-2,-1)
\put(0,0){\circle*{2}}
\put(0,0){\line(0,1){5}}
\put(0,5){\circle*{2}}
\put(0,5){\line(0,1){5}}
\put(0,10){\circle*{2}}
\put(3,-2){\tiny #1}
\put(3,3){\tiny #2}
\put(3,9){\tiny #3}
\end{picture}}
\newcommand{\tdquatreun}[4]{\begin{picture}(20,12)(-5,-1)
\put(3,0){\circle*{2}}
\put(-0.6,0){$\vee$}
\put(6,7){\circle*{2}}
\put(0,7){\circle*{2}}
\put(3,7){\circle*{2}}
\put(3,0){\line(0,1){7}}
\put(5,-2){\tiny #1}
\put(8.5,5){\tiny #2}
\put(1,10){\tiny #3}
\put(-5,5){\tiny #4}
\end{picture}}
\newcommand{\tdquatredeux}[4]{\begin{picture}(20,20)(-5,-1)
\put(3,0){\circle*{2}}
\put(-.65,0){$\vee$}
\put(6,7){\circle*{2}}
\put(0,7){\circle*{2}}
\put(0,14){\circle*{2}}
\put(0,7){\line(0,1){7}}
\put(5,-2){\tiny #1}
\put(9,5){\tiny #2}
\put(-5,5){\tiny #3}
\put(-5,12){\tiny #4}
\end{picture}}
\newcommand{\tdquatretrois}[4]{\begin{picture}(20,20)(-5,-1)
\put(3,0){\circle*{2}}
\put(-.65,0){$\vee$}
\put(6,7){\circle*{2}}
\put(0,7){\circle*{2}}
\put(6,14){\circle*{2}}
\put(6,7){\line(0,1){7}}
\put(5,-2){\tiny #1}
\put(9,5){\tiny #2}
\put(-5,5){\tiny #4}
\put(9,12){\tiny #3}
\end{picture}}
\newcommand{\tdquatrequatre}[4]{\begin{picture}(20,14)(-5,-1)
\put(3,5){\circle*{2}}
\put(-.65,5){$\vee$}
\put(6,12){\circle*{2}}
\put(0,12){\circle*{2}}
\put(3,0){\circle*{2}}
\put(3,0){\line(0,1){5}}
\put(6,-3){\tiny #1}
\put(6,4){\tiny #2}
\put(9,12){\tiny #3}
\put(-5,12){\tiny #4}
\end{picture}}
\newcommand{\tdquatrecinq}[4]{\begin{picture}(12,19)(-2,-1)
\put(0,0){\circle*{2}}
\put(0,0){\line(0,1){5}}
\put(0,5){\circle*{2}}
\put(0,5){\line(0,1){5}}
\put(0,10){\circle*{2}}
\put(0,10){\line(0,1){5}}
\put(0,15){\circle*{2}}
\put(3,-2){\tiny #1}
\put(3,3){\tiny #2}
\put(3,9){\tiny #3}
\put(3,14){\tiny #4}
\end{picture}}
\newcommand{\tdcinqquatre}[5]{\begin{picture}(15,14)(-5,-1)
\put(3,0){\circle*{2}}
\put(-0.65,0){$\vee$}
\put(6,7){\circle*{2}}
\put(0,7){\circle*{2}}
\put(3,7){\circle*{2}}
\put(3,0){\line(0,1){7}}
\put(6,7){\line(0,1){7}}
\put(6,14){\circle*{2}}
\put(5,-2){\tiny #1}
\put(9,5){\tiny #2}
\put(1,10){\tiny #4}
\put(-5,5){\tiny #5}
\put(9,12){\tiny #3}
\end{picture}}
\newcommand{\tdcinqsept}[5]{\begin{picture}(15,8)(-5,-1)
\put(3,0){\circle*{2}}
\put(-0.65,0){$\vee$}
\put(6,7){\circle*{2}}
\put(0,7){\circle*{2}}
\put(2.35,7){$\vee$}
\put(3,14){\circle*{2}}
\put(9,14){\circle*{2}}
\put(5,-2){\tiny #1}
\put(9,5){\tiny #2}
\put(-1,12){\tiny #4}
\put(-5,5){\tiny #5}
\put(11,12){\tiny #3}
\end{picture}}
\newcommand{\tdcinqneuf}[5]{\begin{picture}(15,26)(-5,-1)
\put(3,0){\circle*{2}}
\put(-0.65,0){$\vee$}
\put(6,7){\circle*{2}}
\put(0,7){\circle*{2}}
\put(6,14){\circle*{2}}
\put(6,7){\line(0,1){7}}
\put(6,21){\circle*{2}}
\put(6,14){\line(0,1){7}}
\put(5,-2){\tiny #1}
\put(9,5){\tiny #2}
\put(-5,5){\tiny #5}
\put(9,12){\tiny #3}
\put(9,19){\tiny #4}
\end{picture}}
\newcommand{\tdelta}{\tilde{\Delta}}
\newcommand{\mmodels}{\mid \hspace{-.2mm} \models}
\newcommand{\D}{\mathcal{D}}
\title{Algèbres de greffes}
\date{}
\author{Anthony Mansuy \\ \\
{\small{\it Laboratoire de Mathématiques, Université de Reims}}\\
\small{{\it Moulin de la Housse - BP 1039 - 51687 REIMS Cedex 2, France}}\\
\small{e-mail : anthony.mansuy@univ-reims.fr}}
\newtheorem{defi}{\indent Définition}
\newtheorem{lemma}[defi]{\indent Lemme}
\newtheorem{cor}[defi]{\indent Corollaire}
\newtheorem{theo}[defi]{\indent Théorème}
\newtheorem{prop}[defi]{\indent Proposition}
\newenvironment{proof}{{\bf Preuve.}}{\hfill $\Box$}
\begin{document}
\maketitle

\textbf{Résumé.} Pour étudier certains ensembles de probabilités, appelés moyennes induites par J. Ecalle, F. Menous introduit deux opérateurs de greffes $ B^{+} $ et $ B^{-} $. \`A partir de ces deux opérateurs, nous construisons des algèbres de Hopf d'arbres enracinés et ordonnés $ \mathcal{B}^{i} $, $ i \in \mathbb{N}^{\ast} $, $ \mathcal{B}^{\infty} $ et $ \mathcal{B} $ vérifiant les relations d'inclusions $ \mathcal{B}^{1} \subseteq \hdots \mathcal{B}^{i} \subseteq \mathcal{B}^{i+1} \subseteq \hdots \subseteq \mathcal{B}^{\infty} \subseteq \mathcal{B} $. On munit $ \mathcal{B} $ d'une structure de bialgèbre dupliciale dendriforme et on en déduit que $ \mathcal{B} $ est colibre et auto-duale. Nous introduisons enfin la notion d'algèbre bigreffe et démontrons que $ \mathcal{B} $ est engendrée comme algèbre bigreffe par l'élément $ \tdun{1} $.\\

\textbf{Abstract.} In order to study some sets of probabilities, called induced averages by J. Ecalle, F. Menous introduces two grafting operators $ B^{+} $ and $ B^{-} $. With these two operators, we construct Hopf algebras of rooted and ordered trees $ \mathcal{B}^{i} $, $ i \in \mathbb{N}^{\ast} $, $ \mathcal{B}^{\infty} $ and $ \mathcal{B} $ satisfying the inclusion relations $ \mathcal{B}^{1} \subseteq \hdots \mathcal{B}^{i} \subseteq \mathcal{B}^{i+1} \subseteq \hdots \subseteq \mathcal{B}^{\infty} \subseteq \mathcal{B} $. We endow $ \mathcal{B} $ with a structure of duplicial dendriform bialgebra and we deduce that $ \mathcal{B} $ is cofree and self-dual. Finally, we introduce the notion of bigraft algebra and we prove that $ \mathcal{B} $ is generated as bigraft algebra by the element $ \tdun{1} $.\\

\textbf{Keywords.} Planar rooted trees, Hopf algebra of ordered forests, Duplicial dendriform bialgebra.\\

\textbf{AMS Classification.} 05C05, 16W30.

\tableofcontents

\section*{Introduction}

L'algèbre de Hopf des arbres enracinés de A. Connes et D. Kreimer est décrite dans \cite{Connes} dans le contexte des théories de champs quantiques: elle est utilisée lors de la procédure de Renormalisation. Cette algèbre de Hopf est engendrée par l'ensemble des arbres enracinés, et le coproduit est donné par les coupes admissibles. D'autres algèbres de Hopf d'arbres sont obtenues à partir de celle-ci en ajoutant une structure supplémentaire aux arbres enracinés. Par exemple, avec des données planaires en plus, on obtient l'algèbre de Hopf des arbres plans $ \mathcal{H}_{PR} $ ainsi que sa version décorée $ \mathcal{H}_{PR}^{\mathcal{D}} $ \cite{Foissy1,Foissy2,Holtkamp}; en munissant les sommets d'un ordre total, on obtient l'algèbre de Hopf des arbres ordonnés $ \mathcal{H}_{o} $. Dans cet article, nous nous intéressons à différentes sous-algèbres de Hopf de $ \mathcal{H}_{o} $, dont la construction a été inspirée par les travaux de F. Menous.

Dans \cite{Menous}, F. Menous étudie certains ensembles de probabilités, appelés moyennes induites par J. Ecalle, associés à une variable aléatoire sur $ \mathbb{R} $. Une moyenne est un ensemble de "poids" indexés par des mots sur l'alphabet à deux éléments $ \{+,-\} $. Pour un mot $ (\varepsilon_{1}, \hdots , \varepsilon_{n}) $ donné ($ \varepsilon_{i} = \pm $), le poids est simplement la probabilité d'appartenir à $ \mathbb{R}^{\varepsilon_{1}} $ au temps 1, à $ \mathbb{R}^{\varepsilon_{2}} $ au temps 2, ..., à $ \mathbb{R}^{\varepsilon_{n}} $ au temps $ n $. F. Menous prouve dans \cite{Menous} qu'un tel coefficient peut être décomposé en une somme de coefficients élémentaires qui sont indexés par des arbres et des forêts ordonnés. Pour cela, il construit par récurence, avec un formalisme proche de celui du calcul moulien, un ensemble de forêts ordonnées. C'est cet ensemble, noté $ \mathcal{G} $, qu'on se propose d'étudier ici.

Pour cela, nous définissons deux opérateurs de greffes $ B^{+} $ et $ B^{-} $, à partir desquels nous proposons une construction de l'ensemble $ \mathcal{G} $, ce qui permet de définir l'algèbre de Hopf $ \mathcal{B}^{\infty} = \mathbb{K} [\mathcal{G}] $. Une étude combinatoire de $ \mathcal{G} $ permet de construire un "dévissage" de l'algèbre de Hopf $ \mathcal{B}^{\infty} $ de sorte qu'on a les inclusions $ \mathcal{B}^{1} \subseteq \hdots \subseteq \mathcal{B}^{i} \subseteq \mathcal{B}^{i+1} \subseteq \hdots \subseteq \mathcal{B}^{\infty} $, où $ \mathcal{B}^{i} $ est une algèbre de Hopf pour tout $ i \in \mathbb{N}^{\ast} $.

En généralisant la construction de F. Menous, nous définissons une algèbre de Hopf $ \mathcal{B} = \mathbb{K} [\mathcal{T}] $ à partir d'un ensemble d'arbres ordonnés $ \mathcal{T} $ construit avec les opérateurs $ B^{+} $ et $ B^{-} $. Par des raisonnements proches de ceux utilisés dans \cite{Foissy4}, on démontre la coliberté et l'auto-dualité de $ \mathcal{B} $. On s'attache ensuite à munir $ \mathcal{B} $ d'une structure d'algèbre de greffes à gauche, d'algèbre de greffes à droite qui sont des quotients d'algèbres diptères (voir \cite{Loday}). Cela nous amène naturellement à la notion d'algèbre bigreffe, et on démontre que $ \mathcal{B} $ est engendrée comme algèbre bigreffe par l'unique arbre de degré 1.\\

Le texte est organisé comme suit: dans la première partie, nous construisons, avec des opérateurs de greffes $ B^{+} $ et $ B^{-} $, un ensemble $ \mathcal{G} $ d'arbres ordonnés et nous exhibons différentes propriétés combinatoires de $ \mathcal{G} $. \`A partir de cet ensemble, nous définissons l'algèbre $ \mathcal{B}^{\infty} $, nous démontrons que c'est une algèbre de Hopf et nous calculons sa série formelle. On construit ensuite des sous-algèbres $ \mathcal{B}^{i} $ de $ \mathcal{B}^{\infty} $, pour $ i \in \mathbb{N}^{\ast} $, et on démontre que ce sont des algèbres de Hopf. Dans la seconde partie, nous définissons l'algèbre $ \mathcal{B} $ à partir d'un ensemble $ \mathcal{T} $ lui aussi construit avec les opérateurs de greffes $ B^{+} $ et $ B^{-} $. On munit $ \mathcal{B} $ d'une structure de bialgèbre dupliciale dendriforme (voir \cite{Foissy3,Foissy4,Loday2}) et cela nous permet de montrer la coliberté et l'auto-dualité de $ \mathcal{B} $. Nous définissons enfin les algèbres de greffes à gauche, les algèbres de greffes à droite et les algèbres bigreffes et on démontre que $ \mathcal{B} $ est engendrée comme algèbre bigreffe par l'élément $ \tdun{1} $.\\

{\bf Remerciements.} {J'aimerais remercier mon directeur de thèse, Loïc Foissy, qui m'a proposé d'étudier ces opérateurs de greffes et soutenu dans mes recherches.}

\begin{center}
\textsc{Notations et rappels}
\end{center}

Tout les espaces vectoriels et les algèbres sont définis sur un corps $ \mathbb{K} $. \'Etant donné un ensemble $ X $, nous noterons par $ \mathbb{K} [X] $ l'espace vectoriel engendré par $ X $. Pour tout espaces vectoriels $ V $ et $ W $, $ V \otimes W $ désignera le produit tensoriel de $ V $ par $ W $ sur $ \mathbb{K} $.\\

Nous rappelons brièvement la construction de l'algèbre de Hopf des arbres enracinés plans \cite{Foissy1,Holtkamp}, qui généralise celle de Connes-Kreimer des arbres enracinés \cite{Connes}. Un \textit{arbre enraciné} est un graphe fini sans boucle avec un sommet particulier appelé la \textit{racine} \cite{Stanley}. Une \textit{forêt enracinée} est un graphe fini $ F $ dont toutes les composantes connexes sont des arbres enracinés. L'ensemble des sommets d'une forêt enracinée $ F $ est noté $ V(F) $. Le \textit{degré} d'une forêt $ F $ est le nombre, noté $ \left| F \right| $, de sommets de $ F $. L'unique forêt de degré $ 0 $ est l'arbre vide noté $ 1 $. La \textit{longueur} d'une forêt $ F $ est le nombre de composantes connexes de $ F $. Voici par exemple les forêts enracinées de degré $ \leq 4 $:
$$ 1, \tun, \tun\tun, \tdeux, \tun\tun\tun, \tdeux\tun, \ttroisun, \ttroisdeux, \tun\tun\tun\tun, \tdeux\tun\tun, \tdeux\tdeux, \ttroisun\tun, \ttroisdeux\tun, \tquatreun, \tquatredeux, \tquatrequatre, \tquatrecinq .$$

Soit $ F $ une forêt enracinée. Les arêtes de $ F $ sont orientées vers le bas (des feuilles vers les racines). Si $ v,w \in V(F) $, on notera $ v \rightarrow w $ s'il y a une arête de $ F $ de $ v $ vers $ w $ et $ v \twoheadrightarrow w $ si il y a un chemin orienté de $ v $ vers $ w $ dans $ F $. Par convention, $ v \twoheadrightarrow v $ pour tout $ v \in V(F) $. Si $ v , w \in V(F) $, on dira que $ v $ est un \textit{descendant} de $ w $ si $ v \twoheadrightarrow w $ et que $ v $ est un \textit{ancêtre} de $ w $ si $ w \twoheadrightarrow v $. Si $ v \in V(F) $, nous noterons $ h(v) $ la \textit{hauteur} de $ v $, c'est-à-dire le nombre d'arêtes sur le chemin orienté entre $ v $ et la racine de l'arbre qui a $ v $ pour sommet, et $ f_{v} $ la \textit{fertilité} de $ v $ égale au cardinal de $ \{w \in V(F)\:\mid \:w \rightarrow v\} $. On dira qu'un arbre est une \textit{échelle} si tous ses sommets qui ne sont pas des feuilles sont de fertilité égale à 1.\\

Une \textit{forêt plane} est une forêt enracinée $ F $ tel que l'ensemble des racines de $ F $ est totalement ordonné et, pour tout sommet $ v \in V(F) $, l'ensemble $\{w \in V(F)\:\mid \:w \rightarrow v\}$ est totalement ordonné. Par exemple, voici les forêts planes de degré $ \leq 4 $:
$$ 1,\tun, \tun\tun,\tdeux,\tun\tun\tun,\tdeux\tun,\tun\tdeux,\ttroisun,\ttroisdeux,\tun\tun\tun\tun,\tdeux\tun\tun,\tun\tdeux\tun,\tun\tun\tdeux,\tdeux\tdeux,
\ttroisun\tun,\ttroisdeux\tun,\tun\ttroisun,\tun\ttroisdeux,\tquatreun,\tquatredeux,\tquatretrois,\tquatrequatre,\tquatrecinq .$$
On peut définir un ordre total sur les sommets de $ F $ de la manière suivante: si $ v,w \in V(F) $, $ v \neq w $, alors $ v < w $ si l'une des conditions suivantes est satisfaite:
\begin{enumerate}
\item $ v $ et $ w $ ne sont pas les sommets d'un même arbre de la forêt $ F $, et l'arbre ayant pour sommet $ v $ est plus à gauche que l'arbre ayant pour sommet $ w $
\item $ v $ et $ w $ sont les sommets d'un même arbre de la forêt $ F $, et il existe un chemin orienté de $ v $ vers $ w $, c'est-à-dire $ v \twoheadrightarrow w $,
\item $ v $ et $ w $ sont les sommets d'un même arbre de la forêt $ F $, il n'existe pas de chemin orienté de $ v $ vers $ w $ ni de $ w $ vers $ v $ et, en notant $ x $ l'ancêtre commun de $ v $ et $ w $ de hauteur maximale, l'arête terminant le chemin de $ v $ vers $ x $ est plus à gauche que l'arête terminant le chemin de $ w $ vers $ x $.
\end{enumerate}
De plus, $ v \leq w $ si $ v = w $ ou $ v < w $.\\

Soit $ \boldsymbol{v} $ un sous-ensemble de $ V(F) $. On dira que $ \boldsymbol{v} $ est une \textit{coupe admissible} de $ F $, et on écrira $ \boldsymbol{v} \models V(F) $, si $ \boldsymbol{v} $ est totalement déconnecté, c'est-à-dire que $ v \twoheadrightarrow w \hspace{-.7cm} / \hspace{.7cm} $ pour tout couple $ (v,w) $ d'éléments distincts de $ \boldsymbol{v} $. Si $ \boldsymbol{v} \models V(F) $, $ Lea_{\boldsymbol{v}} (F) $ est la sous-forêt enracinée plane de $ F $ obtenu en gardant seulement les sommets "au-dessus" de $ \boldsymbol{v} $, c'est-à-dire $\{ w \in V(F) \:\mid \: \exists v \in \boldsymbol{v}, \:w \twoheadrightarrow v \}$. Remarquons que $ \boldsymbol{v} \subseteq Lea_{\boldsymbol{v}}(F) $. $ Roo_{\boldsymbol{v}}(F) $ est la sous-forêt enracinée plane obtenue en gardant les autres sommets.

En particulier, si $ \boldsymbol{v} = \emptyset $, alors $ Lea_{\boldsymbol{v}}(F) = 1 $ et $ Roo_{\boldsymbol{v}}(F) = F $: c'est la \textit{coupe vide} de $ F $. Si $ \boldsymbol{v} $ contient les racines de $ F $, alors il contient uniquement les racines de $ F $, et $ Lea_{\boldsymbol{v}}(F) = F $, $ Roo_{\boldsymbol{v}}(F) = 1 $: c'est la \textit{coupe totale} de $ F $. On écrira $ \boldsymbol{v} \mmodels V(F) $ si $ \boldsymbol{v} $ est une coupe admissible non vide, non totale de $ F $. Une coupe admissible $ \boldsymbol{v} $ est une \textit{coupe simple} si $ {\rm card} (\boldsymbol{v}) = 1 $.

Il est prouvé dans \cite{Foissy1} que l'espace $ \mathcal{H}_{PR} $ généré par les forêts planes est une bigèbre. Le produit est donné par la concaténation des forêts planes et le coproduit est définit pour toute forêt enracinée plane $ F $ par:
\begin{eqnarray*}
\Delta(F) &= & \sum_{\boldsymbol{v} \models V(F)} Lea_{\boldsymbol{v}}(F) \otimes Roo_{\boldsymbol{v}}(F),\\
& = & F \otimes 1+1\otimes F+\sum_{\boldsymbol{v} \mmodels V(F)} Lea_{\boldsymbol{v}}(F) \otimes Roo_{\boldsymbol{v}}(F), {\rm ~ si ~} F \neq 1.
\end{eqnarray*}
Si $ F $ est non vide, on pose $ \tdelta(F) = \Delta(F) - (F \otimes 1+1\otimes F) $.\\

Nous aurons besoin d'une version décorée de cette algèbre de Hopf. Si $ \mathcal{D} $ est un ensemble non vide, une \textit{forêt plane décorée} est un couple $ (F,d) $, où $ F $ est une forêt enracinée plane et $ d : V(F) \rightarrow \mathcal{D} $ une application. L'algèbre des forêts planes décorées $ \mathcal{H}_{PR}^{\mathcal{D}} $ est encore une algèbre de Hopf. Nous donnons ci-dessous les arbres plans décorés de degré $ \leq 4 $:
$$ \tdun{a}, ~ a \in \mathcal{D}, \hspace{0.5cm} \tddeux{a}{b}, ~ (a,b) \in \mathcal{D}^{2}, \hspace{0.5cm} \tdtroisun{a}{b}{c}, \tdtroisdeux{a}{b}{c}, ~ (a,b,c) \in \mathcal{D}^{3}, $$
$$ \tdquatreun{a}{b}{c}{d}, \tdquatredeux{a}{b}{c}{d}, \tdquatretrois{a}{b}{c}{d}, \tdquatrequatre{a}{b}{c}{d}, \tdquatrecinq{a}{b}{c}{d}, ~ (a,b,c,d) \in \mathcal{D}^{4} .$$

Rappelons aussi la notion de forêt ordonnée. Une \textit{forêt ordonnée} est une forêt enracinée avec un ordre total sur l'ensemble de ces sommets. Notons $ \mathcal{H}_{o} $ le $ \mathbb{K} $-espace vectoriel engendré par les forêts ordonnées. Si $ F $ et $ G $ sont deux forêts ordonnées, alors la forêt ordonnée $ F G $ est aussi une forêt ordonnée avec, pour tout $ v \in F $, $ w \in G $, $ v < w $. Cela définit un produit non commutatif sur l'ensemble des forêts ordonnées, qui s'étend linéairement à $ \mathcal{H}_{o} $. Par exemple, $ (\tddeux{1}{2}) \times (\tdun{1} \tdtroisun{4}{3}{2}) = \tddeux{1}{2} \tdun{3} \tdtroisun{6}{5}{4} $. Voici les forêts ordonnées de degré $ \leq 3 $:
$$ 1,\tdun{1},\tdun{1}\tdun{2},\tddeux{1}{2},\tddeux{2}{1},\tdun{1}\tdun{2}\tdun{3},\tdun{1}\tddeux{2}{3},\tdun{1}\tddeux{3}{2},\tddeux{1}{3}\tdun{2},\tdun{2}\tddeux{3}{1},\tddeux{1}{2}\tdun{3},\tddeux{2}{1}\tdun{3},
\tdtroisun{1}{3}{2},\tdtroisun{2}{3}{1},\tdtroisun{3}{2}{1},\tdtroisdeux{1}{2}{3},\tdtroisdeux{1}{3}{2},\tdtroisdeux{2}{1}{3},\tdtroisdeux{2}{3}{1},\tdtroisdeux{3}{1}{2},\tdtroisdeux{3}{2}{1} .$$

Si $ F $ est une forêt ordonnée, alors toute sous-forêt de $ F $ est aussi ordonnée. On peut donc définir un coproduit sur $ \mathcal{H}_{o} $ comme suit: pour toute forêt ordonnée $ F $,
\begin{eqnarray} \label{coproduitordonné}
\Delta(F)=\sum_{\boldsymbol{v} \models V(F)} Lea_{\boldsymbol{v}}(F) \otimes Roo_{\boldsymbol{v}}(F).
\end{eqnarray}
Par exemple,
$$\Delta\left(\tdquatredeux{2}{3}{4}{1}\right)=\tdquatredeux{2}{3}{4}{1} \otimes 1+1\otimes \tdquatredeux{2}{3}{4}{1}
+\tdun{1} \otimes \tdtroisun{1}{2}{3}+\tddeux{2}{1} \otimes \tddeux{1}{2}+\tdun{1} \otimes \tdtroisdeux{2}{3}{1}
+\tdun{1}\tdun{2} \otimes \tddeux{1}{2}+\tddeux{3}{1}\tdun{2} \otimes \tdun{1}.$$
Les forêts planes sont ordonnées, en numérotant les sommets suivant l'ordre défini plus haut. Réciproquement, les forêts ordonnées sont planes. Il suffit de supprimer l'indexation des sommets.

\section{Les algèbres de Hopf $ \mathcal{B}^{i} $}

\subsection{Construction et étude de $ \mathcal{G} $} \label{partie1}

Commençons par introduire deux opérateurs de greffes $ B^{+} $ et $ B^{-} $ qui sont utilisés dans tout ce qui suit. Pour cela, considèrons une suite de $ m $ arbres ordonnés non vides $ T_{1}, \hdots ,T_{m} $ dont la somme des degrés est notée $ n $. On pose:
\begin{enumerate}
\item $ B^{-}(T_{1}, \hdots ,T_{m}) $ l'arbre ordonné de degré $ n+1 $ obtenu comme suit: on considère $ T_{1}, \hdots ,T_{m} $ comme la suite des sous-arbres d'un arbre enraciné ayant pour racine le sommet indexé par $ n+1 $. De plus, on conviendra que $ B^{-}(1) $ est égale à l'arbre $ \tdun{1} $, où $ 1 $ désigne l'arbre vide.
\item $ B^{+}(T_{1}, \hdots ,T_{m}) $ l'arbre ordonné de degré $ n+1 $ construit en greffant le sommet indexé par $ n+1 $ comme le fils le plus à droite de la racine de $ T_{1} $ et en considérant alors $ T_{2},\hdots,T_{m} $ comme la suite des sous-arbres issus du sommet indexé par $ n+1 $. En particulier, on notera $ B^{+}(T_{1}) = B^{+}(T_{1},1) $ l'arbre obtenu en greffant le sommet indexé par $ \left| T_{1} \right| + 1 $ comme le fils le plus à droite de la racine de $ T_{1} $. De plus, on conviendra que $ B^{+}(1) $ est égale à l'arbre $ \tdun{1} $.
\end{enumerate}

\vspace{0.3cm}

{\bf Note.} {Les opérateurs $ B^{+} $ et $ B^{-} $ sont différents de ceux introduits dans \cite{Connes} par A. Connes et D. Kreimer}

\vspace{0.3cm}

{\bf Exemples.} {$ B^{-}(\tddeux{1}{2},\tdun{1}) = \tdquatredeux{4}{3}{1}{2} $, $ B^{+}(\tdtroisun{3}{2}{1}) = B^{+}(\tdtroisun{3}{2}{1},1) = \tdquatreun{3}{4}{2}{1} $, $ B^{+}(\tddeux{1}{2},\tdun{1}) = \tdquatretrois{1}{4}{3}{2} $.}

\vspace{0.5cm}

Pour toute suite $ \underline{\varepsilon} = \varepsilon_{1}, \ldots ,\varepsilon_{n} \in \lbrace +,- \rbrace ^{n} $, avec $ n \geq 1 $, on définit par récurrence un ensemble $ \mathcal{G}^{(\underline{\varepsilon})} $ qui correspond à un ensemble de forêts ordonnées de degré $ n $. \\

Si $ \underline{n=1} $, $ \mathcal{G}^{(\varepsilon_{1})} $, pour $ \varepsilon_{1} $ quelconque, est l'ensemble réduit à un seul élément, la forêt de degré 1, l'unique sommet étant évidemment indexé par 1.\\

Si $ \underline{n \geq 2} $, considérons l'ensemble $ \mathcal{G}^{(\varepsilon_{1}, \hdots,\varepsilon_{n-1})} $ déjà construit.

\begin{enumerate}
\item Si $ \varepsilon_{n}=- $, les éléments $ F $ de $ \mathcal{G}^{(\varepsilon_{1}, \hdots,\varepsilon_{n})} $ sont obtenus par la transformation suivante. On prend un élément $ F' = T_{1} \hdots T_{m} $ de $ \mathcal{G}^{(\varepsilon_{1}, \hdots,\varepsilon_{n-1})} $, avec $ m \geq 1 $, et on considère $ T_{1}, \hdots ,T_{m} $ comme la suite des sous-arbres d'un arbre enraciné ayant pour racine le sommet indexé par $ n $. Cela donne ainsi naissance à une nouvelle forêt ordonnée de degré $ n $, avec un seul arbre, égale à $ B^{-}(T_{1}, \hdots,T_{m}) $.

\item Si $ \varepsilon_{n}=+ $, alors comme précédement on considère un élément $ F' $ de $ \mathcal{G}^{(\varepsilon_{1}, \hdots,\varepsilon_{n-1})} $. Nous avons alors plusieurs possibilités pour ajouter un nouveau sommet indexé par $ n $. Notons encore $ F'=T_{1} \hdots T_{m} $, où $ T_{1},\hdots,T_{m} $ est la suite des arbres qui composent la forêt $ F' $ et $ m \geq 1 $. On peut alors faire l'une des transformations suivantes pour obtenir un élément de $ \mathcal{G}^{(\varepsilon_{1}, \hdots,\varepsilon_{n})} $.
\begin{enumerate}
\item Concaténer l'unique arbre de degré 1 indexé par $ n $ à la forêt $ T_{1} \hdots T_{m} $ sur la droite. Cela donne la forêt ordonnée de degré $ n $ égale à $ T_{1} \hdots T_{m} \tdun{$n$} $.
\item Pour $ 1 \leq i \leq m $, greffer le sommet indexé par $ n $ comme le fils le plus à droite de la racine de $ T_{i} $ et considérer alors $ T_{i+1},\hdots,T_{m} $ comme la suite des sous-arbres issus du sommet indexé par $ n $. On obtient alors la forêt ordonnée de degré $ n $ égale à $ T_{1} \hdots T_{i-1} B^{+}(T_{i}, \hdots,T_{m}) $.
\end{enumerate}
\end{enumerate}

Voici une illustration de cette construction pour $ n = 1,2,3,4 $:
\vspace{-0.3cm}

\begin{eqnarray*}
\mathcal{G}^{(+)}=\mathcal{G}^{(-)}&=&\{\tdun{1}\}\\
\mathcal{G}^{(+,+)}=\mathcal{G}^{(-,+)}&=&\{\tdun{1}\tdun{2},\tddeux{1}{2}\}\\
\mathcal{G}^{(+,-)}=\mathcal{G}^{(-,-)}&=&\{\tddeux{2}{1}\}\\
\mathcal{G}^{(+,+,+)}=\mathcal{G}^{(-,+,+)}&=&\{\tdun{1}\tdun{2}\tdun{3},\tdun{1}\tddeux{2}{3},\tdtroisdeux{1}{3}{2},
\tddeux{1}{2}\tdun{3},\tdtroisun{1}{3}{2}\}\\
\mathcal{G}^{(+,-,+)}=\mathcal{G}^{(-,-,+)}&=&\{\tddeux{2}{1}\tdun{3},\tdtroisun{2}{3}{1}\}\\
\mathcal{G}^{(+,+,-)}=\mathcal{G}^{(-,+,-)}&=&\{\tdtroisun{3}{2}{1},\tdtroisdeux{3}{1}{2}\}\\
\mathcal{G}^{(+,-,-)}=\mathcal{G}^{(-,-,-)}&=&\{\tdtroisdeux{3}{2}{1}\}\\
\mathcal{G}^{(+,+,+,+)}=\mathcal{G}^{(-,+,+,+)}&=&\{\tdun{1}\tdun{2}\tdun{3}\tdun{4},\tdun{1}\tdun{2}\tddeux{3}{4},
\tdun{1}\tdtroisdeux{2}{4}{3},\tdquatrequatre{1}{4}{3}{2},\tdun{1}\tddeux{2}{3}\tdun{4},\tdun{1}\tdtroisun{2}{4}{3},
\tdquatrecinq{1}{4}{2}{3},\\
&&\tdtroisdeux{1}{3}{2}\tdun{4},\tdquatredeux{1}{4}{3}{2},\tddeux{1}{2}\tdun{3}\tdun{4},\tddeux{1}{2}\tddeux{3}{4},
\tdquatretrois{1}{4}{3}{2},\tdtroisun{1}{3}{2}\tdun{4},\tdquatreun{1}{4}{3}{2}\}\\
\mathcal{G}^{(+,-,+,+)}=\mathcal{G}^{(-,-,+,+)}&=&\{\tddeux{2}{1}\tdun{3}\tdun{4}, \tddeux{2}{1}\tddeux{3}{4},
\tdquatretrois{2}{4}{3}{1},\tdtroisun{2}{3}{1}\tdun{4},\tdquatreun{2}{4}{3}{1}\}\\
\mathcal{G}^{(+,+,-,+)}=\mathcal{G}^{(-,+,-,+)}&=&\{\tdtroisun{3}{2}{1}\tdun{4},\tdquatreun{3}{4}{2}{1},
\tdtroisdeux{3}{1}{2}\tdun{4},\tdquatredeux{3}{4}{1}{2}\}\\
\mathcal{G}^{(+,-,-,+)}=\mathcal{G}^{(-,-,-,+)}&=&\{\tdtroisdeux{3}{2}{1}\tdun{4},\tdquatredeux{3}{4}{2}{1}\}\\
\mathcal{G}^{(+,+,+,-)}=\mathcal{G}^{(-,+,+,-)}&=&\{\tdquatreun{4}{3}{2}{1},\tdquatretrois{4}{2}{3}{1},
\tdquatrecinq{4}{1}{3}{2},\tdquatredeux{4}{3}{1}{2},\tdquatrequatre{4}{1}{3}{2}\}
\end{eqnarray*}

\begin{eqnarray*}
\mathcal{G}^{(+,-,+,-)}=\mathcal{G}^{(-,-,+,-)}&=&\{\tdquatredeux{4}{3}{2}{1},\tdquatrequatre{4}{2}{3}{1}\} \hspace{5cm} \\
\mathcal{G}^{(+,+,-,-)}=\mathcal{G}^{(-,+,-,-)}&=&\{\tdquatrequatre{4}{3}{2}{1},\tdquatrecinq{4}{3}{1}{2}\}\\
\mathcal{G}^{(+,-,-,-)}=\mathcal{G}^{(-,-,-,-)}&=&\{\tdquatrecinq{4}{3}{2}{1}\}
\end{eqnarray*}

Dans la suite, étant donné $ \underline{\varepsilon} \in \lbrace +,- \rbrace ^{n} $ avec $ n \geq 1 $, on identifiera toujours les deux ensembles $ \mathcal{G}^{(+,\underline{\varepsilon})} $ et $ \mathcal{G}^{(-,\underline{\varepsilon})} $ (et les deux ensembles $ \mathcal{G}^{(+)} $ et $ \mathcal{G}^{(-)} $). On préferera, suivant les cas, dire qu'une forêt appartient à $ \mathcal{G}^{(+,\underline{\varepsilon})} $ ou à $ \mathcal{G}^{(-,\underline{\varepsilon})} $.\\

Pour $ n \geq 2 $, considérons un élément $ F $ de $ \mathcal{G}^{(\varepsilon_{1}, \hdots,\varepsilon_{n-1})} $. Notons $ S_{\varepsilon_{n}}(F) $ l'ensemble des forêts construites à partir de $ F $ par les méthodes de construction ci-dessus, suivant la valeur de $ \varepsilon_{n} $. Alors,
\begin{eqnarray} \label{union}
\mathcal{G}^{(\varepsilon_{1}, \hdots,\varepsilon_{n})} = \bigcup_{F \in \mathcal{G}^{(\varepsilon_{1}, \hdots,\varepsilon_{n-1})}} S_{\varepsilon_{n}}(F) .
\end{eqnarray}

Le lemme suivant sera utile dans la suite:

\begin{lemma} \label{prelim} Soient $ \underline{\varepsilon}, \underline{\varepsilon}' \in \displaystyle\bigcup_{n \geq 1} \left\lbrace +,- \right\rbrace ^{n} $, avec $ \underline{\varepsilon} \neq \underline{\varepsilon}' $. Alors $ \mathcal{G}^{(+,\underline{\varepsilon})} \cap \mathcal{G}^{(+,\underline{\varepsilon}')} = \emptyset $.
\end{lemma}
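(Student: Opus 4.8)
The plan is to argue by induction on the common length $n$ of the sequences $\underline{\varepsilon}$ and $\underline{\varepsilon}'$, distinguishing according to the first place where the two words differ. More precisely, since $\underline{\varepsilon} \neq \underline{\varepsilon}'$ means (up to the identification $\mathcal{G}^{(+,\cdot)} = \mathcal{G}^{(-,\cdot)}$) that there is a first index $k$ with $\varepsilon_k \neq \varepsilon'_k$, and since the prefixes before $k$ agree, one reduces to the situation where the two words start to diverge at a controlled position. The key structural fact I would extract and use is that, for a forest $F \in \mathcal{G}^{(\underline{\varepsilon})}$ of degree $n$, the letter $\varepsilon_n$ and some simple combinatorial invariant of the vertex labelled $n$ are mutually determined: if $\varepsilon_n = -$, the vertex $n$ is the root of the (unique) tree of $F$ containing it and $F$ consists of a single tree, whereas if $\varepsilon_n = +$, the vertex $n$ is either an isolated extra root on the right (case (a)) or grafted as the rightmost child of some root (case (b)) — in every $+$-case $n$ is \emph{not} the minimal vertex of its tree unless that tree is the trivial tree $\tdun{n}$ sitting at the far right, and in no $+$-case is $F$ forced to be a single tree of root $n$. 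So reading off whether $F$ is a single tree whose root carries the maximal label tells us $\varepsilon_n$.

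\textbf{First} I would set up the induction: the base case $n = 1$ is vacuous since there is only one word of length $1$ after identification. \textbf{For the inductive step}, suppose $\mathcal{G}^{(+,\underline{\delta})} \cap \mathcal{G}^{(+,\underline{\delta}')} = \emptyset$ for all distinct $\underline{\delta},\underline{\delta}'$ of length $< n$, and take $\underline{\varepsilon} = (\varepsilon_1,\dots,\varepsilon_n) \neq \underline{\varepsilon}' = (\varepsilon'_1,\dots,\varepsilon'_n)$ (I use $\mathcal{G}^{(+,\underline{\varepsilon})}$ to mean $\mathcal{G}^{(\varepsilon_0,\varepsilon_1,\dots,\varepsilon_n)}$ with $\varepsilon_0 = +$, i.e. the sequences actually under consideration have length $n+1$; the bookkeeping is routine). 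Suppose for contradiction that $F$ lies in both intersections. If the last letters differ, say $\varepsilon_n = -$ and $\varepsilon'_n = +$, then membership of $F$ in $\mathcal{G}^{(\dots,-)}$ forces $F$ to be a single tree with root labelled by the maximal index $n$, while membership of $F$ in $\mathcal{G}^{(\dots,+)}$ is built by one of the transformations (a), (b) applied to some $F' \in \mathcal{G}^{(\dots,+)}$ of degree $n-1$; a short case check on (a) and (b) shows the vertex $n$ is never the root of a tree that constitutes all of $F$, giving the contradiction. If instead $\varepsilon_n = \varepsilon'_n$, then $F$ is obtained from some $F'' \in \mathcal{G}^{(\varepsilon_1,\dots,\varepsilon_{n-1})}$ and some $F''' \in \mathcal{G}^{(\varepsilon'_1,\dots,\varepsilon'_{n-1})}$ by the \emph{same} construction rule; one checks this rule is invertible — removing the vertex with the maximal label $n$ recovers $F''$ and $F'''$ uniquely — so $F'' = F'''$, whence $F'' \in \mathcal{G}^{(\varepsilon_1,\dots,\varepsilon_{n-1})} \cap \mathcal{G}^{(\varepsilon'_1,\dots,\varepsilon'_{n-1})}$, which is empty by the induction hypothesis since the truncated words still differ.

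\textbf{The main obstacle} I anticipate is the invertibility claim in the $+$-case: when $\varepsilon_n = +$ one must check that erasing the top-labelled vertex $n$ from $F$ unambiguously tells us which of the transformations (a) or (b$_i$) was used and reconstructs $F'$ — in particular that two different decompositions of the same $F$ cannot arise, one via (a) and one via (b$_i$), or via (b$_i$) and (b$_j$) with $i \neq j$. This requires looking at the position of $n$ among the roots and at the subtrees hanging from $n$ versus those to its right at the root level, and arguing that these data pin down $i$ and the original forest; it is elementary but is the one spot where care is needed. The $-$-case invertibility is immediate since $B^-$ simply removes a root. Once invertibility is in hand, the induction closes cleanly and the identification of $\mathcal{G}^{(+,\cdot)}$ with $\mathcal{G}^{(-,\cdot)}$ causes no trouble because it only ever identifies the very first letter, which plays no role in the arguments above.
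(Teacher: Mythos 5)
Your proof is correct and follows essentially the same route as the paper: induction on the common length, with the case $\varepsilon_n \neq \varepsilon_n'$ settled by reading the last letter off the forest (single tree with maximal index at the root versus not), and the case $\varepsilon_n = \varepsilon_n'$ settled by the injectivity of the constructions $S_\pm$ — the paper verifies this by comparing the explicit lists of forests in $S_+(T_1\cdots T_m)$ and $S_+(T_1'\cdots T_l')$, which amounts precisely to your ``erase the top-labelled vertex'' inverse. Two small points to tidy up: the lemma also allows $\underline{\varepsilon}$ and $\underline{\varepsilon}'$ of different lengths (disposed of trivially by comparing degrees), and the base case $n=1$ is not vacuous (there are two words, $+$ and $-$) but is already covered by your ``last letters differ'' argument, which uses no inductive hypothesis.
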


\begin{proof}
Soient $ \underline{\varepsilon}, \underline{\varepsilon}' \in \cup_{n \geq 1} \left\lbrace +,- \right\rbrace ^{n} $, avec $ \underline{\varepsilon} \neq \underline{\varepsilon}' $. Tout d'abord, supposons que la suite $ \underline{\varepsilon} $ est de longueur $ n $ et que la suite $ \underline{\varepsilon}' $ est de longueur $ n' $, avec $ n \neq n' $. Comme les éléments de $ \mathcal{G}^{(+,\underline{\varepsilon})} $ sont de degré $ n+1 $ et ceux de $ \mathcal{G}^{(+,\underline{\varepsilon}')} $ sont de degré $ n'+1 $, $ \mathcal{G}^{(+,\underline{\varepsilon})} \cap \mathcal{G}^{(+,\underline{\varepsilon}')} = \emptyset $.\\

Supposons maintenant que les suites $ \underline{\varepsilon} $ et $ \underline{\varepsilon}' $ sont de même longueur $ n \geq 1 $ et raisonnons par récurrence sur $ n $. Le résultat est trivial pour $ n = 1 $. Supposons $ n \geq 2 $. On distingue alors deux cas:
\begin{enumerate}
\item Si $ \varepsilon_{n} \neq \varepsilon_{n}' $, par exemple $ \varepsilon_{n} = - $ et $ \varepsilon_{n}' = + $. Les éléments de $ \mathcal{G}^{(+,\underline{\varepsilon})} $ sont tous des arbres dont la racine est indexée par $ n+1 $. L'ensemble $ \mathcal{G}^{(+,\underline{\varepsilon}')} $ est constitué d'arbres et de forêts (de longueur $ \geq 2 $). Par construction, les arbres de $ \mathcal{G}^{(+,\underline{\varepsilon}')} $ sont de la forme $ B^{+}(T_{1}, \hdots , T_{m}) $, avec $ m \geq 1 $ et $ T_{1} \hdots T_{m} \in \mathcal{G}^{(+,\varepsilon_{1}', \ldots ,\varepsilon_{n-1}')} $. En particulier, la racine de ces arbres est indexée par un entier $ < n+1 $. Donc $ \mathcal{G}^{(+,\underline{\varepsilon})} \cap \mathcal{G}^{(+,\underline{\varepsilon}')} = \emptyset $.
\item Si $ \varepsilon_{n} = \varepsilon_{n}' $, comme $ \underline{\varepsilon} \neq \underline{\varepsilon}' $, $ \varepsilon_{1}, \ldots ,\varepsilon_{n-1} \neq \varepsilon_{1}', \ldots ,\varepsilon_{n-1}' $. Par l'absurde, supposons que $ \mathcal{G}^{(+,\underline{\varepsilon})} \cap \mathcal{G}^{(+,\underline{\varepsilon}')} \neq \emptyset $. Alors il existe une forêt $ T_{1} \hdots T_{m} \in \mathcal{G}^{(+,\varepsilon_{1}, \ldots ,\varepsilon_{n-1})} $ et une forêt $ T_{1}' \hdots T_{l}' \in \mathcal{G}^{(+,\varepsilon_{1}', \ldots ,\varepsilon_{n-1}')} $ telles que $ S_{\varepsilon_{n}}(T_{1} \hdots T_{m}) \cap S_{\varepsilon_{n}'}(T_{1}' \hdots T_{l}') \neq \emptyset $. Par hypothèse de récurrence, $ \mathcal{G}^{(+,\varepsilon_{1}, \ldots ,\varepsilon_{n-1})} \cap \mathcal{G}^{(+,\varepsilon_{1}', \ldots ,\varepsilon_{n-1}')} = \emptyset $, donc $ T_{1} \hdots T_{m} \neq T_{1}' \hdots T_{l}' $. Or
\begin{enumerate}
\item si $ \varepsilon_{n} = \varepsilon_{n}' = - $, $ S_{-}(T_{1} \hdots T_{m}) = \left\lbrace B^{-}(T_{1}, \hdots ,T_{m}) \right\rbrace $, $ S_{-}(T_{1}' \hdots T_{l}') = \left\lbrace B^{-}(T_{1}', \hdots ,T_{l}') \right\rbrace $. Donc $ B^{-}(T_{1}, \hdots ,T_{m}) = B^{-}(T_{1}', \hdots ,T_{l}') $ et, nécessairement, $ m = l $ et $ T_{1} \hdots T_{m} = T_{1}' \hdots T_{l}' $. On aboutit donc à une contradiction.
\item si $ \varepsilon_{n} = \varepsilon_{n}' = + $, alors
\begin{eqnarray*}
S_{+}(T_{1} \hdots T_{m}) & = & \{ B^{+}(T_{1}, \hdots ,T_{m}) , T_{1} B^{+}(T_{2}, \hdots ,T_{m}), \hdots , T_{1} \hdots T_{m-1} B^{+}(T_{m}),\\
&& T_{1} \hdots T_{m} \tun_{n+1} \} \\
S_{+}(T_{1}' \hdots T_{l}') & = & \{ B^{+}(T_{1}', \hdots ,T_{l}') , T_{1}' B^{+}(T_{2}', \hdots ,T_{l}'), \hdots , T_{1}' \hdots T_{l-1}' B^{+}(T_{l}'),\\
&& T_{1}' \hdots T_{l}' \tun_{n+1} \} .
\end{eqnarray*}
Alors il existe $ i \in \left\lbrace 1, \hdots m \right\rbrace $ et $ j \in \left\lbrace 1, \hdots l \right\rbrace $ tels que $ T_{1} \hdots T_{i-1} B^{+}(T_{i}, \hdots ,T_{m}) = T_{1}' \hdots T_{j-1}' B^{+}(T_{j}', \hdots, T_{l}') $. Nécessairement, on doit avoir $ i=j $, $ T_{1} = T_{1}', \hdots, T_{i-1}=T_{i-1}' $ et $ B^{+}(T_{i}, \hdots ,T_{m}) = B^{+}(T_{i}', \hdots, T_{l}') $. Or cette dernière égalité implique que $ m = l $ et $ T_{i} = T_{i}' , \hdots , T_{m} = T_{m}' $. Ici encore, cela contredit $ T_{1} \hdots T_{m} \neq T_{1}' \hdots T_{l}' $.
\end{enumerate}
\end{enumerate}
Par récurrence, le résultat est ainsi démontré.
\end{proof}

\vspace{0.5cm}

En reprenant la preuve précédente, remarquons que dans l'égalité (\ref{union}) l'union est disjointe. De plus, dans chaque ensemble $ S_{\varepsilon_{n}}(F) $, toutes les forêts sont distinctes car elles ne sont pas de même longueur. Ainsi, pour tout $ \underline{\varepsilon} \in \lbrace +,- \rbrace ^{n} $, avec $ n \geq 1 $, les éléments de $ \mathcal{G}^{(\underline{\varepsilon})} $ sont tous distincts.\\

Considérons les ensembles suivants :
$$ \mathcal{G}=\bigcup_{\underline{\varepsilon} \in \lbrace +,- \rbrace ^{n} , n \geq 1 } \mathcal{G}^{(\underline{\varepsilon})} {\rm ~ et ~} \mathcal{G}^{0}=\bigcup_{n \geq 1} \mathcal{G}^{(\overbrace{+, \hdots ,+}^{n {\rm ~ fois}})} .$$

D'après le lemme \ref{prelim}, les deux unions précédentes sont disjointes (à l'identification près des ensembles $ \mathcal{G}^{(+,\underline{\varepsilon})} $ et $ \mathcal{G}^{(-,\underline{\varepsilon})} $) et il n'y a donc pas de redondances dans la construction des forêts appartenant à $ \mathcal{G} $.\\

Remarquons que $ \mathcal{G} $ n'est pas stable pour l'opération de concaténation. Par exemple, les arbres $ \tddeux{1}{2} $ et $ \tddeux{2}{1} $ appartiennent à $ \mathcal{G} $ mais la forêt $ \tddeux{1}{2} \tddeux{2}{1} \notin \mathcal{G} $. Cela est dû au fait que l'arbre de droite qui compose la forêt $ \tddeux{1}{2} \tddeux{2}{1} $ a été construit avec un $ B^{-} $ ( $ \tddeux{2}{1} \in \mathcal{G}^{(+,-)} $). Plus précisément, on a le résultat suivant:

\begin{lemma} \label{1}
Les assertions suivantes sont équivalentes:
\begin{enumerate}
\item La forêt $ T_{1} \hdots T_{m} $ appartient à $ \mathcal{G} $.
\item $ T_{1} \in \mathcal{G} $ et $ T_{2}, \hdots , T_{m} \in \mathcal{G}^{0} $.
\end{enumerate}
En particulier, pour toute forêt $ T_{1} \hdots T_{m} \in \mathcal{G} $, $ T_{1}, \hdots , T_{m} $ appartiennent à $ \mathcal{G} $.
\end{lemma}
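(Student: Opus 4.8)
The plan is to establish the two implications separately, handling $(2)\Rightarrow(1)$ first because it yields a stability property of $\mathcal{G}^{0}$ that the converse needs.

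The core ingredient is the auxiliary claim: \emph{if $F\in\mathcal{G}^{(\underline{\varepsilon})}$ for some $\underline{\varepsilon}\in\{+,-\}^{n}$ and $G\in\mathcal{G}^{(+,\ldots,+)}$ with $k$ signs $+$, then $FG\in\mathcal{G}^{(\underline{\varepsilon},+,\ldots,+)}$ with $n+k$ signs}. I would prove it by induction on $k=|G|$. For $k=1$ we have $G=\tdun{1}$ and $FG$ is precisely the forest obtained from $F$ by construction $(2)(a)$, so $FG\in\mathcal{G}^{(\underline{\varepsilon},+)}$. For $k\geq2$, write $G\in S_{+}(G')$ with $G'=U_{1}\cdots U_{p}\in\mathcal{G}^{(+^{k-1})}$; by induction $FG'\in\mathcal{G}^{(\underline{\varepsilon},+^{k-1})}$, and since the list of trees of $FG'$ ends with $U_{1},\ldots,U_{p}$ while constructions $(2)(a)$ and $(2)(b)$ only act on the rightmost trees and the new maximal vertex, the same construction that turns $G'$ into $G$ turns $FG'$ into $FG$; hence $FG\in S_{+}(FG')\subseteq\mathcal{G}^{(\underline{\varepsilon},+^{k})}$ by (\ref{union}). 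Applied repeatedly with all sign-strings equal to blocks of $+$'s, this shows that $\mathcal{G}^{0}$ is stable under concatenation; then, for $T_{1}\in\mathcal{G}^{(\underline{\varepsilon})}$ and $T_{2},\ldots,T_{m}\in\mathcal{G}^{0}$ with $m\geq2$, one concatenates $T_{2}\cdots T_{m}\in\mathcal{G}^{0}$ to $T_{1}$ and obtains $T_{1}\cdots T_{m}\in\mathcal{G}^{(\underline{\varepsilon},+^{|T_{2}|+\cdots+|T_{m}|})}\subseteq\mathcal{G}$ (the case $m=1$ being trivial), which is $(1)$.

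For $(1)\Rightarrow(2)$ I would induct on $n$, with $F=T_{1}\cdots T_{m}\in\mathcal{G}^{(\varepsilon_{1},\ldots,\varepsilon_{n})}$. If $n=1$ or $\varepsilon_{n}=-$, then $F$ is a single tree, so $m=1$ and $T_{1}=F\in\mathcal{G}$, with nothing further to check. If $\varepsilon_{n}=+$, write $F\in S_{+}(F')$ with $F'=T_{1}'\cdots T_{m'}'\in\mathcal{G}^{(\varepsilon_{1},\ldots,\varepsilon_{n-1})}$, so by induction $T_{1}'\in\mathcal{G}$ and $T_{2}',\ldots,T_{m'}'\in\mathcal{G}^{0}$. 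If $F$ arises from construction $(2)(a)$, then $F=T_{1}'\cdots T_{m'}'\,\tdun{1}$, so $T_{1}=T_{1}'\in\mathcal{G}$, the intermediate trees are the $T_{j}'\in\mathcal{G}^{0}$, and the last tree is $\tdun{1}\in\mathcal{G}^{(+)}\subseteq\mathcal{G}^{0}$. If $F$ arises from construction $(2)(b)$ with index $i$, then $F=T_{1}'\cdots T_{i-1}'\,B^{+}(T_{i}',\ldots,T_{m'}')$: when $i=1$ we have $m=1$ and are done; when $i\geq2$, $T_{1}=T_{1}'\in\mathcal{G}$ and $T_{2},\ldots,T_{i-1}=T_{2}',\ldots,T_{i-1}'\in\mathcal{G}^{0}$, and it remains to see that $T_{i}=B^{+}(T_{i}',\ldots,T_{m'}')\in\mathcal{G}^{0}$; but $T_{i}',\ldots,T_{m'}'\in\mathcal{G}^{0}$, so by the stability of $\mathcal{G}^{0}$ under concatenation $T_{i}'\cdots T_{m'}'\in\mathcal{G}^{(+^{k})}$ for some $k$, and $B^{+}(T_{i}',\ldots,T_{m'}')$ is obtained from this forest by construction $(2)(b)$ with index $1$, hence lies in $\mathcal{G}^{(+^{k+1})}\subseteq\mathcal{G}^{0}$. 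The final assertion "$T_{1},\ldots,T_{m}\in\mathcal{G}$" then follows from $\mathcal{G}^{0}\subseteq\mathcal{G}$.

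The only delicate aspect, and the one I expect to demand care rather than ideas, is the bookkeeping of vertex indices: one must check that standardizing a subtree and then applying $B^{+}$ reproduces the label pattern occurring inside elements of $\mathcal{G}$, and that the construction step applied to $FG'$ genuinely yields $FG$. This is harmless because prepending $F$ merely shifts all labels of $G$ up by $|F|$ and appends $F$'s trees on the left, leaving untouched the rightmost trees and the position of the new maximal vertex — exactly the data that constructions $(2)(a)$ and $(2)(b)$ act on — so no real obstacle arises beyond this routine verification.
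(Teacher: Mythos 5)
Your proof is correct, and both directions ultimately rest on the same mechanism as the paper's -- unwinding the recursive construction of $\mathcal{G}^{(\underline{\varepsilon})}$ via (\ref{union}) -- but the two implications are organized differently. For $(2)\Rightarrow(1)$ the paper inducts on the degree of $T_{2}\cdots T_{m}$ by peeling off the last tree $T_{m}$ (appending a leaf if $\left|T_{m}\right|=1$, otherwise writing $T_{m}=B^{+}(G_{1},\ldots,G_{k})$ and reattaching the $B^{+}$ after the inductive step); your auxiliary claim that left-concatenation by $F$ commutes with the construction step $S_{+}$ is a slightly more general packaging of the same idea, and it has the advantage of also delivering the closure of $\mathcal{G}^{0}$ under concatenation (the content of Lemma \ref{magma}) in passing. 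For $(1)\Rightarrow(2)$ the paper argues structurally: it locates the last index $i$ with $\varepsilon_{i}=-$, notes that the tree built at step $i$ must sit inside $T_{1}$ (so $\left|T_{1}\right|\geq i$), and invokes the splitting property that $F_{1}F_{2}\in\mathcal{G}^{(\underline{\varepsilon})}$ forces $F_{1}$ and $F_{2}$ into the corresponding initial and final segments of $\underline{\varepsilon}$; your induction on $n$ with a case analysis on the last construction step effectively proves that splitting property on the fly rather than asserting it, which is arguably more self-contained. The label bookkeeping you flag is indeed the only point needing care, and it goes through because the vertex added by $S_{+}$ always carries the maximal index, a feature unaffected by prepending $F$.
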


\begin{proof}
Démontrons tout d'abord le sens direct. Soit une forêt $ T_{1} \hdots T_{m} $ appartenant à $ \mathcal{G} $ et notons $ n = \left| T_{1} \right| + \hdots + \left| T_{m} \right| $ son degré. D'après le lemme \ref{prelim}, il existe un unique $ \underline{\varepsilon} \in \lbrace +,- \rbrace ^{n} $ tel que $ T_{1} \hdots T_{m} \in \mathcal{G}^{(\underline{\varepsilon})} $. Soit $ i $ le plus grand indice tel que $ \varepsilon_{i} = - $. Par construction, les éléments appartenant à $ \mathcal{G}^{(\varepsilon_{1}, \hdots , \varepsilon_{i})} $ sont des arbres de la forme $ B^{-} $ de l'arbre vide si $ i = 1 $ ou $ B^{-} $ d'une suite d'arbres $ G_{1}, \hdots ,G_{k} $ telle que $ G_{1} \hdots G_{k} \in \mathcal{G}^{(\varepsilon_{1}, \hdots , \varepsilon_{i-1})} $ si $ i \geq 2 $. $ T_{1} $ contient donc un sous-arbre $ B^{-}(G_{1}, \hdots ,G_{k}) $ ($ B^{-}(1) $ si $ i =1 $) appartenant à $ \mathcal{G}^{(\varepsilon_{1}, \hdots , \varepsilon_{i})} $. Autrement dit, $ \exists ~ l \geq 0 $ tel que
$$ T_{1} = \overbrace{B^{+}(\hdots B^{+}}^{l ~ {\rm fois}}(B^{-}(G_{1}, \hdots ,G_{k}), \hdots), \hdots ) .$$
Ainsi, $ \left| T_{1} \right| \geq i $. Par ailleurs, remarquons que si $ F_{1} F_{2} \in \mathcal{G}^{(\underline{\varepsilon})} $, alors $ F_{1} \in \mathcal{G}^{(\varepsilon_{1}, \hdots, \varepsilon_{\left|F_{1}\right|})} $ et $ F_{2} \in \mathcal{G}^{(\varepsilon_{\left|F_{1}\right|+1}, \hdots, \varepsilon_{\left|F_{1}\right| + \left|F_{2}\right|})} $. Ainsi, $ T_{1} \in \mathcal{G} $ et $ T_{2}, \hdots , T_{m} \in \mathcal{G}^{0} $.\\

Réciproquement, montrons que si $ T_{1} \in \mathcal{G} $ et $ T_{2}, \hdots , T_{m} \in \mathcal{G}^{0} $, alors $ T_{1} T_{2} \hdots T_{m} \in \mathcal{G} $ par récurrence sur $ \left| F \right| $, où $ F = T_{2} \hdots T_{m} $. Si $ \left| F \right| = 1 $, c'est-à-dire $ F = \tdun{1} $, alors $ T_{1} F = T_{1} {\begin{picture}(5,0)(-2,-1) \put(0,0){\circle*{2}} \end{picture}}_{\left|T_{1}\right|+1} $, et par construction de $ \mathcal{G} $, $ T_{1} F \in \mathcal{G} $. Soit $ n \geq 1 $ et supposons le résultat vérifié pour tout $ F $ tel que $ \left| F \right| \leq n $. Considérons $ F = T_{2} \hdots T_{m} $ de degré $ n+1 $, avec $ T_{2}, \hdots , T_{m} \in \mathcal{G}^{0} $. Si $ \left| T_{m} \right| = 1 $, par hypothèse de récurence $ T_{1} T_{2} \hdots T_{m-1} \in \mathcal{G} $ et comme pour l'initialisation $ T_{1} T_{2} \hdots T_{m-1} {\begin{picture}(5,0)(-2,-1) \put(0,0){\circle*{2}} \end{picture}}_{\left|T_{1}\right|+n+1} \in \mathcal{G} $. Sinon, comme $ T_{m} \in \mathcal{G}^{0} $, il existe $ G_{1}, \hdots , G_{k} \in \mathcal{G}^{0} $ tel que $ T_{m} = B^{+}(G_{1}, \hdots , G_{k}) $, avec $ G_{1} \hdots G_{k} $ une foret de degré $ \left| T_{m} \right| - 1 $. Par hypothèse de récurrence, $ T_{1} \hdots T_{m-1} G_{1} \hdots G_{k} \in \mathcal{G} $, donc $ T_{1} F = T_{1} T_{2} \hdots T_{m-1} B^{+}(G_{1}, \hdots , G_{k}) $ appartient bien à $ \mathcal{G} $.
\end{proof}
\\

D'après le lemme précédent, $ \mathcal{G} $ est stable par concaténation à gauche par des éléments de $ \mathcal{G}^{0} $. Pour $ \mathcal{G}^{0} $, on a le

\begin{lemma} \label{magma}
$ \mathcal{G}^{0} \cup \left\lbrace 1 \right\rbrace $ est un monoïde libre pour l'opération de concaténation.
\end{lemma}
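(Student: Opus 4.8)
L'id\'ee est la suivante. Notons $\mathcal{T}^{0}$ l'ensemble des arbres appartenant \`a $\mathcal{G}^{0}$, et $M$ l'ensemble des for\^ets ordonn\'ees dont toutes les composantes connexes appartiennent \`a $\mathcal{T}^{0}$, auquel on adjoint la for\^et vide $1$. Comme les composantes connexes de la concat\'enation de deux for\^ets ordonn\'ees s'obtiennent en juxtaposant leurs composantes respectives, $M$ est un sous-mono\"ide de l'ensemble des for\^ets ordonn\'ees muni de la concat\'enation, et c'est m\^eme le mono\"ide libre de base $\mathcal{T}^{0}$ : tout \'el\'ement de $M$ s'\'ecrit de mani\`ere unique comme produit d'\'el\'ements de $\mathcal{T}^{0}$, \`a savoir la suite de ses composantes connexes. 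Il reste donc \`a \'etablir l'\'egalit\'e $\mathcal{G}^{0}\cup\{1\}=M$, c'est-\`a-dire : \emph{la for\^et $T_{1}\cdots T_{m}$ appartient \`a $\mathcal{G}^{0}$ si et seulement si chacune de ses composantes connexes $T_{1},\ldots,T_{m}$ appartient \`a $\mathcal{G}^{0}$} (ce qui revient \`a dire que chaque $T_{i}$ appartient \`a $\mathcal{T}^{0}$).

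Pour le sens direct, soit $F=T_{1}\cdots T_{m}\in\mathcal{G}^{0}$ ; par d\'efinition $F\in\mathcal{G}^{(+,\ldots,+)}$ pour la suite de $|F|$ signes $+$. En appliquant it\'erativement la remarque faite dans la preuve du lemme~\ref{1} (si $F_{1}F_{2}\in\mathcal{G}^{(\underline{\varepsilon})}$, alors $F_{1}\in\mathcal{G}^{(\varepsilon_{1},\ldots,\varepsilon_{|F_{1}|})}$), on obtient que chaque $T_{i}$ appartient \`a $\mathcal{G}^{(+,\ldots,+)}$ pour la suite de $|T_{i}|$ signes $+$, donc \`a $\mathcal{T}^{0}$.

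Pour la r\'eciproque, je proc\'ederais par r\'ecurrence sur $n=|T_{1}|+\cdots+|T_{m}|$, en supposant $T_{1},\ldots,T_{m}\in\mathcal{T}^{0}$. Le cas $m=1$ est trivial. Si $|T_{m}|=1$, alors $F$ s'obtient \`a partir de $T_{1}\cdots T_{m-1}$ --- qui est dans $\mathcal{G}^{0}$ par hypoth\`ese de r\'ecurrence --- par la transformation (2)(a) de la construction relative \`a $S_{+}$, d'o\`u $F\in\mathcal{G}^{0}$. Si $|T_{m}|\geq 2$, alors $T_{m}$, \'etant un arbre de $\mathcal{G}^{(+,\ldots,+)}$, a n\'ecessairement \'et\'e produit par la transformation (2)(b) avec $i=1$ (les transformations (2)(a) et (2)(b) avec $i\geq 2$ donnent des for\^ets de longueur $\geq 2$) : on a $T_{m}=B^{+}(G_{1},\ldots,G_{k})$ avec $G_{1}\cdots G_{k}\in\mathcal{G}^{(+,\ldots,+)}$ de degr\'e $|T_{m}|-1$, et par le sens direct $G_{1},\ldots,G_{k}\in\mathcal{T}^{0}$. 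La for\^et $T_{1}\cdots T_{m-1}G_{1}\cdots G_{k}$ est de degr\'e $n-1$ et toutes ses composantes connexes sont dans $\mathcal{T}^{0}$, donc elle est dans $\mathcal{G}^{0}$ par hypoth\`ese de r\'ecurrence ; en lui appliquant la transformation (2)(b) d'indice $m$ --- greffer le nouveau sommet comme fils le plus \`a droite de la racine de la $m$-i\`eme composante (issue de $G_{1}$) et consid\'erer les composantes suivantes (issues de $G_{2},\ldots,G_{k}$) comme ses sous-arbres --- on obtient exactement $T_{1}\cdots T_{m-1}B^{+}(G_{1},\ldots,G_{k})=F$, donc $F\in\mathcal{G}^{0}$.

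Le seul point demandant un peu de soin est le suivi des \'etiquettes des sommets dans cette derni\`ere \'etape : il faut v\'erifier que la for\^et produite par (2)(b) porte le m\^eme \'etiquetage que $T_{1}\cdots T_{m}$, dans lequel, au sein de $\mathcal{H}_{o}$, les sommets de $T_{m}$ sont d\'ecal\'es de $|T_{1}|+\cdots+|T_{m-1}|$. C'est imm\'ediat d\`es qu'on remarque que le sommet greff\'e re\c{c}oit l'\'etiquette maximale $n$ dans les deux descriptions et que les pi\`eces $G_{1},\ldots,G_{k}$ y re\c{c}oivent les m\^emes blocs d'\'etiquettes cons\'ecutives ; c'est le m\^eme type de v\'erification que celle d\'ej\`a men\'ee dans la preuve du lemme~\ref{1}. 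Je m'attends \`a ce que ce contr\^ole d'\'etiquetage, joint \`a la r\'edaction soigneuse de la r\'ecurrence, constitue l'essentiel du travail, le reste \'etant formel.
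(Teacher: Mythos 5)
Votre preuve est correcte et suit essentiellement la m\^eme d\'emarche que celle du papier : le sens direct repose sur la m\^eme remarque de scindage ($F_{1}F_{2}\in\mathcal{G}^{(\underline{\varepsilon})}$ entra\^ine $F_{1}\in\mathcal{G}^{(\varepsilon_{1},\ldots,\varepsilon_{|F_{1}|})}$) issue de la preuve du lemme~\ref{1}, et la r\'eciproque proc\`ede par la m\^eme r\'ecurrence via l'\'ecriture $T_{m}=B^{+}(G_{1},\ldots,G_{k})$ et le passage par la for\^et $T_{1}\cdots T_{m-1}G_{1}\cdots G_{k}$. Votre r\'edaction est m\^eme un peu plus compl\`ete que celle du papier sur le point de la libert\'e (unicit\'e de la d\'ecomposition en composantes connexes) et sur le contr\^ole des \'etiquettes.
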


\begin{proof} En effet, si $ T_{1} T_{2} \in \mathcal{G}^{0} $, $ T_{1} T_{2} \in \mathcal{G}^{\overbrace{(+,\hdots,+)}^{\left|T_{1}\right| + \left|T_{2}\right| ~ {\rm fois}}} $, donc $ T_{1} \in \mathcal{G}^{\overbrace{(+,\hdots,+)}^{\left|T_{1}\right| ~ {\rm fois}}} \subseteq \mathcal{G}^{0} $ et $ T_{2} \in \mathcal{G}^{\overbrace{(+,\hdots,+)}^{\left|T_{2}\right| ~ {\rm fois}}} \subseteq \mathcal{G}^{0} $.\\

Réciproquement, supposons que $ T_{1}, T_{2} \in \mathcal{G}^{0} $ et raisonnons par récurrence sur le degré de $ T_{2} $. Si $ \left|T_{2}\right| = 1 $, $ T_{2} = \tdun{1} $ et alors $ T_{1} T_{2} \in \mathcal{G}^{0} $ par construction de $ \mathcal{G}^{0} $. Supposons $ \left|T_{2}\right| \geq 2 $. Comme $ T_{2} \in \mathcal{G}^{0} $, il existe $ G_{1}, \hdots , G_{k} \in \mathcal{G}^{0} $ tels que $ T_{2} = B^{+}(G_{1}, \hdots ,G_{k}) $. Alors la forêt $ T_{1} G_{1} \hdots G_{k} \in \mathcal{G}^{0} $ par hypothèse de récurrence, et donc $ T_{1} T_{2} = T_{1} B^{+}(G_{1}, \hdots ,G_{k}) \in \mathcal{G}^{0} $, par construction de $ \mathcal{G}^{0} $.
\end{proof}
\\

{\bf Remarque.} {Posons $ \Delta_{\boldsymbol{l}} (F) = \displaystyle\sum_{\boldsymbol{v} \models V(F) {\rm ~ et ~} roo_{\boldsymbol{l}}(F) \in Roo_{\boldsymbol{v}} (F)} Lea_{\boldsymbol{v}} (F) \otimes Roo_{\boldsymbol{v}} (F) $ pour toute forêt non vide $ F $, où $ roo_{\boldsymbol{l}}(F) $ est la racine de l'arbre le plus à gauche de la forêt $ F $. Soient $ T_{1}, \hdots, T_{m} $ $ m $ arbres non vides, $ m \geq 1 $. Alors, en étudiant les coupes admissibles:
\begin{eqnarray*}
\Delta (B^{-}(T_{1}, \hdots ,T_{m})) & = & (Id \otimes B^{-}) \circ \Delta (T_{1} \hdots T_{m}) + B^{-}(T_{1} \hdots T_{m}) \otimes 1 ,\\
\Delta (B^{+}(T_{1}, \hdots ,T_{m})) & = & (Id \otimes B^{+}) \circ \Delta_{\boldsymbol{l}} (T_{1} \hdots T_{m}) + B^{+} (T_{1}, \hdots ,T_{m}) \otimes 1\\
& \hspace{0.5 cm} & + \Delta_{\boldsymbol{l}} (T_{1}) \cdot (B^{-}(T_{2}, \hdots, T_{m}) \otimes 1).
\end{eqnarray*}
}

\vspace{-0.3cm}

\begin{lemma} \label{comod}
Soit $ T $ un arbre appartenant à $ \mathcal{G}^{0} \cup \left\lbrace 1 \right\rbrace $. Alors, pour toute coupe admissible $ \boldsymbol{v} \models V(T) $, $ Roo_{\boldsymbol{v}} (T) \in \mathcal{G}^{0} \cup \left\lbrace 1 \right\rbrace $.
\end{lemma}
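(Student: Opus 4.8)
The plan is to induct on $|T|$. For the base case $|T|\le 1$ the only elements of $\mathcal{G}^{0}\cup\{1\}$ are $1$ and $\tdun{1}$, whose only admissible cuts are the empty cut (giving $Roo_{\boldsymbol{v}}(T)=T$) and, in the second case, the total cut (giving $Roo_{\boldsymbol{v}}(T)=1$), so the claim holds there. Assume now $|T|\ge 2$ and $T\in\mathcal{G}^{0}$, say $T\in\mathcal{G}^{(+,\ldots,+)}$ with $|T|$ signs $+$. The first step is to pin down the shape of $T$: inspecting step $2$ of the construction of $\mathcal{G}^{(+,\ldots,+)}$ and noting that, among the forests it produces, only the choice $i=1$ in $2$(b) gives a single tree, one obtains $T=B^{+}(T_{1},\ldots,T_{m})$ for some forest $T_{1}\ldots T_{m}\in\mathcal{G}^{0}$ of degree $|T|-1$; by Lemma~\ref{magma} each $T_{i}$ lies in $\mathcal{G}^{0}$ and $|T_{i}|<|T|$. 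Write $\rho$ for the root of $T$ (equivalently, the root of $T_{1}$) and $v_{0}$ for the vertex created by $B^{+}$, namely the rightmost child of $\rho$, carrying $T_{2},\ldots,T_{m}$ as its subtrees.

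Next I would classify an admissible cut $\boldsymbol{v}\models V(T)$ according to how it meets $\rho$ and $v_{0}$:
\begin{enumerate}
\item $\boldsymbol{v}=\emptyset$: then $Roo_{\boldsymbol{v}}(T)=T\in\mathcal{G}^{0}$.
\item $\rho\in\boldsymbol{v}$: then $\boldsymbol{v}=\{\rho\}$ (total cut) and $Roo_{\boldsymbol{v}}(T)=1$.
\item $\boldsymbol{v}\neq\emptyset$, $\rho\notin\boldsymbol{v}$, $v_{0}\notin\boldsymbol{v}$: then $v_{0}$ is kept and no ancestor of $v_{0}$ in $T$ lies in $\boldsymbol{v}$, so a vertex of $T_{i}$ is above $\boldsymbol{v}$ in $T$ iff it is above $\boldsymbol{v}\cap V(T_{i})$ in $T_{i}$; hence $\boldsymbol{v}$ is an admissible cut of the forest $T_{1}\ldots T_{m}$ keeping its leftmost root, and $Roo_{\boldsymbol{v}}(T)$ equals $B^{+}$ applied to the sequence of tree components of $Roo_{\boldsymbol{v}}(T_{1}\ldots T_{m})$, which itself factors as the concatenation of the $Roo_{\boldsymbol{v}}(T_{i})$ (the grafted vertex $v_{0}$ being simply regrafted).
\item $v_{0}\in\boldsymbol{v}$: then necessarily $\rho\notin\boldsymbol{v}$, and $\boldsymbol{v}$ meets none of $V(T_{2}),\ldots,V(T_{m})$ (all above $v_{0}$), so $\boldsymbol{v}=\{v_{0}\}\sqcup\boldsymbol{v}'$ with $\boldsymbol{v}'=\boldsymbol{v}\cap V(T_{1})\models V(T_{1})$, and $Roo_{\boldsymbol{v}}(T)=Roo_{\boldsymbol{v}'}(T_{1})$.
\end{enumerate}

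I would then conclude case by case. Cases $1$ and $2$ are immediate; case $4$ follows directly from the inductive hypothesis applied to $T_{1}$. In case $3$, the inductive hypothesis gives $Roo_{\boldsymbol{v}}(T_{i})\in\mathcal{G}^{0}\cup\{1\}$ for every $i$, so by Lemma~\ref{magma} (which in particular asserts that $\mathcal{G}^{0}\cup\{1\}$ is stable under concatenation) the forest $Roo_{\boldsymbol{v}}(T_{1}\ldots T_{m})$ lies in $\mathcal{G}^{0}$; it is nonempty because $\rho$ is kept, and applying $B^{+}$ to the sequence of its tree components again lands in $\mathcal{G}^{0}$, since for a forest in $\mathcal{G}^{(+,\ldots,+)}$ this is exactly the operation $2$(b) with $i=1$. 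Thus $Roo_{\boldsymbol{v}}(T)\in\mathcal{G}^{0}$, and the induction closes. (Alternatively, the same three shapes of right-hand tensor factor can be read off directly from the formula for $\Delta(B^{+}(T_{1},\ldots,T_{m}))$ in the Remark preceding the lemma: $(Id\otimes B^{+})\circ\Delta_{\boldsymbol{l}}(T_{1}\ldots T_{m})$ accounts for cases $1$ and $3$, $B^{+}(T_{1},\ldots,T_{m})\otimes 1$ for case $2$, and $\Delta_{\boldsymbol{l}}(T_{1})\cdot(B^{-}(T_{2},\ldots,T_{m})\otimes 1)$ for case $4$.)

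The step I expect to be the real work is the structural identity in case $3$: verifying carefully that deleting from $T$ the vertices lying above $\boldsymbol{v}$ and then discarding the untouched grafted vertex $v_{0}$ returns exactly $B^{+}$ of the tree components of $Roo_{\boldsymbol{v}}(T_{1}\ldots T_{m})$ — that is, that on the level of the root part the operators $Roo$ and $B^{+}$ commute when the grafted vertex survives the cut. Everything else reduces to the definition of admissible cuts, Lemma~\ref{magma}, and the construction of $\mathcal{G}^{0}$.
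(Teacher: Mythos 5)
Your proof is correct and follows essentially the same route as the paper's: induction on the degree, the decomposition $T=B^{+}(T_{1},\ldots,T_{m})$ with each $T_{i}\in\mathcal{G}^{0}$ via Lemma \ref{magma}, a case analysis on how the cut sits relative to the root and the grafted vertex, and reassembly with $B^{+}$ using the stability of $\mathcal{G}^{0}\cup\{1\}$ under concatenation. The only cosmetic difference is that the paper first reduces to simple cuts and iterates, whereas you treat a general admissible cut directly.
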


\begin{proof}
Il suffit de montrer que, pour toute coupe simple $ \boldsymbol{v} \models V(T) $ et pour tout arbre $ T \in \mathcal{G}^{0} \cup \left\lbrace 1 \right\rbrace $, $ Roo_{\boldsymbol{v}} (T) \in \mathcal{G}^{0} \cup \left\lbrace 1 \right\rbrace $. On raisonne par récurrence sur le degré $ n $ de $ T \in \mathcal{G}^{0} \cup \left\lbrace 1 \right\rbrace $, le résultat étant trivial si $ n=0,1,2 $.\\

Supposons $ n \geq 3 $. Comme $ T \in \mathcal{G}^{0} $, il existe $ m \geq 1 $, $ T_{1}, \hdots , T_{m} \in \mathcal{G}^{0} $, tels que $ T = B^{+}(T_{1}, \hdots , T_{m}) $. Soit $ \boldsymbol{v} \models V(T) $ une coupe simple de $ T $. Il y a trois cas possibles:
\begin{enumerate}
\item Si $ \boldsymbol{v} \models V(T_{1}) $, par hypothèse de récurrence, $ Roo_{\boldsymbol{v}} (T_{1}) \in \mathcal{G}^{0} \cup \left\lbrace 1 \right\rbrace $. Si $ Roo_{\boldsymbol{v}} (T_{1}) = 1 $, $ Roo_{\boldsymbol{v}} (T) = 1 $. Si $ Roo_{\boldsymbol{v}} (T_{1}) \neq 1 $, alors, avec le lemme \ref{magma}, $ Roo_{\boldsymbol{v}} (T_{1}) T_{2} \hdots T_{m} \in \mathcal{G}^{0} $, donc $ Roo_{\boldsymbol{v}} (T) = B^{+}(Roo_{\boldsymbol{v}} (T_{1}) ,T_{2}, \hdots ,T_{m}) \in \mathcal{G}^{0} $.
\item Si $ \boldsymbol{v} \models V(T_{i}) $, avec $ i \geq 2 $ (on inclut ici le cas de la coupe totale qui correspond à couper l'arête entre le sommet indexé par $ n $ et la racine de $ T_{i} $). Par hypothèse de récurrence, $ Roo_{\boldsymbol{v}} (T_{i}) \in \mathcal{G}^{0} \cup \left\lbrace 1 \right\rbrace $. Avec le lemme \ref{magma}, $ T_{1} \hdots Roo_{\boldsymbol{v}} (T_{i}) \hdots T_{m} \in \mathcal{G}^{0} $, et ainsi $ Roo_{\boldsymbol{v}} (T) = B^{+}(T_{1}, \hdots ,Roo_{\boldsymbol{v}} (T_{i}), \hdots, T_{m}) \in \mathcal{G}^{0} $.
\item Enfin, si on coupe l'arête joignant la racine de $ T $ (qui est aussi la racine de $ T_{1} $) et le sommet indexé par $ n $, alors $ Roo_{\boldsymbol{v}} (T) = T_{1} \in \mathcal{G}^{0} $.
\end{enumerate}
Ainsi, dans tous les cas, $ Roo_{\boldsymbol{v}} (T) \in \mathcal{G}^{0} $, et on peut conclure par le principe de récurrence.
\end{proof}
\\

{\bf Remarque.} {Par contre, étant donné un arbre $ T $ appartenant à $ \mathcal{G}^{0} $, il existe certaines coupes $ \boldsymbol{v} \models V(T) $ telles que $ Lea_{\boldsymbol{v}} (T) \not\in \mathcal{G}^{0} \cup \left\lbrace 1 \right\rbrace $. Par exemple, considérons $ T_{1}, \hdots , T_{m} \in \mathcal{G}^{0} $, avec $ m \geq 2 $, et $ T = B^{+}(T_{1}, \hdots, T_{m}) \in \mathcal{G}^{0} $. Alors, si on réalise la coupe simple $ \boldsymbol{v} \models V(T) $ consistant à couper l'arête joignant la racine de $ T $ (qui est aussi la racine de $ T_{1} $) et le sommet indexé par $ \left| T \right| $, $ Lea_{\boldsymbol{v}}(T) = B^{-}(T_{2}, \hdots , T_{m}) \in \mathcal{G}^{(\hdots , -)} $ et donc $ \not\in \mathcal{G}^{0} $ en utilisant le lemme \ref{prelim}.
}
\\

Rappelons (voir \cite{Connes}) que l'on peut définir sur $ \mathcal{H}_{PR} $ un opérateur de greffe $ B : \mathcal{H}_{PR} \rightarrow \mathcal{H}_{PR} $ qui, étant donnée une suite de $ m $ arbres plans non vides $ T_{1}, \hdots ,T_{m} $, associe l'arbre $ B(T_{1}, \hdots ,T_{m}) $ obtenu en considérant $ T_{1}, \hdots ,T_{m} $ comme la suite des sous-arbres ayant une racine commune (avec la convention $ B(1) = \tun $). Par exemple, $ B(\tun, \tdeux) = \tquatretrois $.\\

Soit $ T $ un arbre enraciné plan non vide $ \in \mathcal{H}_{PR} $. Une question naturelle est de savoir de combien de façons on peut indexer les sommets de $ T $ pour obtenir un élément de $ \mathcal{G}^{0} $ ou de $ \mathcal{G} $.\\

Dans le cas de $ \mathcal{G}^{0} $, il n'y a qu'une seule et unique indexation possible. En effet, considérons un arbre plan $ T $. Si $ T $ est de degré 1, le résultat est trivial. Sinon, $ T = B(T_{1}, \hdots, T_{m}) $ avec $ T_{1}, \hdots , T_{m} $ $ m $ arbres non vides $ \in \mathcal{H}_{PR} $ et $ m \geq 1 $. Par hypothèse de récurrence, il y a une unique indexation de $ B(T_{1}, \hdots, T_{m-1}) $ en un élément noté $ G $ de $ \mathcal{G}^{0} $. Si $ T_{m} = \tun $, comme le descendant le plus à droite de la racine de $ T $ doit nécessairement être indexé par $ \left| T \right| $, $ B^{+}(G) $ est l'unique arbre de $ \mathcal{G}^{0} $ tel que l'arbre plan associé (en supprimant l'indexation) soit $ T $. Sinon, $ T_{m} = B(T_{m,1},\hdots,T_{m,k}) $. Avec l'hypothèse de récurrence, $ \forall ~ 1 \leq i \leq k $, l'arbre plan $ T_{m,i} $ a une unique indexation en un élément noté $ G_{i} $ de $ \mathcal{G}^{0} $. Alors, $ B^{+}(G,G_{1}, \hdots,G_{k}) \in \mathcal{G}^{0} $, et c'est par construction l'unique arbre de $ \mathcal{G}^{0} $ tel que l'arbre plan associé soit $ T $. On définit ainsi une bijection entre les arbres de $ \mathcal{H}_{PR} $ et les arbres de $ \mathcal{G}^{0} $.\\

Pour le cas plus complexe de $ \mathcal{G} $, on a la

\begin{prop} \label{2}
Soit un arbre plan $ T \in \mathcal{H}_{PR} $. Rappelons que, si $ v $ est un sommet de $ T $, $ f_{v} $ désigne la fertilité de $ v $ et $ h(v) $ la hauteur de $ v $. Notons $ l_{T} $ la feuille la plus à gauche de $ T $. Alors, il y a
$$ 1+\displaystyle\sum^{h(l_{T})-1}_{i=0} \prod_{l_{T} \twoheadrightarrow v ~ {\rm et} ~ h(v) \leq i} f_{v} $$
façons d'indexer $ T $ pour obtenir un élément de $ \mathcal{G} $.
\end{prop}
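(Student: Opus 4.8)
The plan is to reduce the count to a recursion along the leftmost branch of $T$, obtained by inspecting which vertex receives the largest label. For a plane tree $T \in \mathcal{H}_{PR}$ write $N(T)$ for the number of indexations of $T$ producing an element of $\mathcal{G}$; recall also (established just before the statement) that each plane tree has exactly one indexation producing an element of $\mathcal{G}^{0}$. The first step is to prove that if the root $\rho$ of $T$ has fertility $m \geq 1$, and if $T_{1}$ (resp.\ $T_{m}$) is the subtree of $T$ rooted at the leftmost (resp.\ rightmost) child of $\rho$, then
$$ N(T) = N(T_{1}) + N(T \setminus T_{m}), $$
where $T \setminus T_{m}$ is $T$ with the subtree $T_{m}$ deleted, and the second term is read as $N(\tun) = 1$ when $m = 1$.

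To prove this, take a $\mathcal{G}$-indexation $X$ of $T$, say $X \in \mathcal{G}^{(\underline{\varepsilon})}$ with $\underline{\varepsilon} \in \{+,-\}^{n}$, $n = \left| T \right| \geq 2$, and look at the vertex carrying the label $n$, created by the operator $B^{\varepsilon_{n}}$. If $\varepsilon_{n} = -$, the construction of $\mathcal{G}^{(\underline{\varepsilon})}$ forces $X = B^{-}(S_{1}, \hdots, S_{m})$ with $S_{1} \hdots S_{m}$ a $\mathcal{G}$-indexation of the forest $T_{1} \hdots T_{m}$ (and then the label $n$ sits on $\rho$); conversely any such forest-indexation yields, via $B^{-}$, a $\mathcal{G}$-indexation of $T$ whose label $n$ is on $\rho$. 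By Lemma \ref{1} a $\mathcal{G}$-indexation of $T_{1} \hdots T_{m}$ is the same as a $\mathcal{G}$-indexation of $T_{1}$ together with a $\mathcal{G}^{0}$-indexation of each $T_{i}$ $(i \geq 2)$, so there are $N(T_{1})$ of them. If $\varepsilon_{n} = +$, then, $X$ being a single tree, the construction forces $X = B^{+}(S_{1}, \hdots, S_{p})$, so the label $n$ is on the rightmost child of $\rho$, namely the root of $T_{m}$; here $S_{1}$ is the tree obtained from $X$ by deleting $T_{m}$ --- a $\mathcal{G}$-indexation of $T \setminus T_{m}$ by Lemma \ref{1} --- while $S_{2}, \hdots, S_{p}$ are the (unique) $\mathcal{G}^{0}$-indexations of the subtrees of the root of $T_{m}$, and conversely such data reconstruct $X$; this gives $N(T \setminus T_{m})$ indexations in this case. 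The two cases are disjoint because $\rho$ is not a child of itself, and adding them gives the displayed recursion.

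An induction on $m$ now yields $N(T) = m\, N(T_{1}) + 1$ whenever the root of $T$ has fertility $m$ and $T_{1}$ is its leftmost subtree (with $N(\tun) = 1$). Iterating this along the leftmost branch, let $v_{0} = \rho, v_{1}, \hdots, v_{h} = l_{T}$ be the path from the root to the leftmost leaf, with $v_{j+1}$ the leftmost child of $v_{j}$ and $h = h(l_{T})$, so $h(v_{j}) = j$. Writing $M_{j}$ for the number of $\mathcal{G}$-indexations of the subtree of $T$ rooted at $v_{j}$, the previous identity reads $M_{j} = f_{v_{j}} M_{j+1} + 1$ for $0 \leq j \leq h-1$ and $M_{h} = N(\tun) = 1$; a descending induction on $j$ gives
$$ N(T) = M_{0} = 1 + \sum_{i=0}^{h-1} f_{v_{0}} f_{v_{1}} \cdots f_{v_{i}}. $$
Finally the ancestors of $l_{T}$ are exactly $v_{0}, \hdots, v_{h}$, so $\{ v : l_{T} \twoheadrightarrow v,\, h(v) \leq i \} = \{ v_{0}, \hdots, v_{i} \}$, whence $f_{v_{0}} \cdots f_{v_{i}} = \prod_{l_{T} \twoheadrightarrow v,\, h(v) \leq i} f_{v}$ and the formula follows.

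The main obstacle is the bijective bookkeeping in the recursion: one must check that the vertex labelled $n$ is forced (the root $\rho$ if $\varepsilon_{n} = -$, the root of $T_{m}$ if $\varepsilon_{n} = +$), that Lemma \ref{1} compels the ``tail'' subtrees lying to the right to carry their unique $\mathcal{G}^{0}$-indexation, and that labels inside a forest of $\mathcal{G}$ are distributed block by block (the smallest $\left| T_{1} \right|$ labels to the leftmost tree, and so on), so that passing between a forest-indexation and the tuple of its component indexations creates no extra choices. The remainder --- the induction on $m$, the telescoping of $M_{j} = f_{v_{j}} M_{j+1} + 1$, and the recognition of the product --- is routine; the low-degree cases $\left| T \right| \leq 2$ (where $N(\tun) = 1$ and $N(\tdeux) = 2$) already agree with the formula.
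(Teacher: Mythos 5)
Votre preuve est correcte, mais elle suit un chemin sensiblement différent de celui de l'article. Vous classez les indexations selon la position du sommet portant le plus grand indice $n$ (la racine si $\varepsilon_{n}=-$, la racine du sous-arbre le plus à droite $T_{m}$ si $\varepsilon_{n}=+$), ce qui donne la récurrence $N(T)=N(T_{1})+N(T\setminus T_{m})$, que vous linéarisez en $N(T)=f_{\rho}\,N(T_{1})+1$ avant de télescoper le long de la branche la plus à gauche. L'article procède au contraire par récurrence sur la hauteur $h(l_{T})$ de la feuille la plus à gauche et énumère directement les formes possibles $B^{+}(\hdots B^{+}(B^{-}(\hdots))\hdots)$ de l'indexation, en discutant selon que l'indice de la racine est plus petit ou plus grand que celui de son descendant direct le plus à gauche; le facteur $n$ (resp. $m_{1}$) y apparaît comme le choix du nombre de sous-arbres capturés par le dernier $B^{-}$ à chaque niveau. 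Les deux arguments reposent sur les mêmes ingrédients — le lemme \ref{1} et l'unicité de l'indexation en élément de $\mathcal{G}^{0}$ — mais votre décomposition « on retire la plus grande étiquette » sépare proprement la partie bijective (une seule récurrence à deux termes) de la partie calculatoire (le télescopage de $M_{j}=f_{v_{j}}M_{j+1}+1$), ce qui rend la vérification plus locale; la preuve de l'article obtient la formule fermée d'un seul coup mais au prix d'une discussion de cas plus lourde. Les points que vous signalez vous-même comme à vérifier (répartition des étiquettes bloc par bloc dans une forêt de $\mathcal{G}$, position forcée du sommet $n$) sont exactement les bons, et ils découlent de la construction de $\mathcal{G}$ et du lemme \ref{prelim} comme vous l'indiquez.
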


\begin{proof}
Rappelons que si $ T $ est un arbre non vide de $ \mathcal{G} $, $ B^{+}(T,1) = B^{+}(T) $ est l'arbre construit en greffant le sommet indexé par $ \left| T \right| +1 $ comme le fils le plus à droite de la racine de $ T $. En particulier, on ne cherchera pas dans cette preuve à simplifier un produit d'arbres en supprimant les éventuels arbres vides.\\

Raisonnons par récurrence sur $ k = h(l_{T}) $ la hauteur de $ l_{T} $. Pour éviter d'alourdir les notations, on notera indifférement un arbre appartenant à $ \mathcal{H}_{PR} $ et l'arbre (ou les arbres) obtenu après indexation des sommets appartenant à $ \mathcal{G} $.\\

Si $ k=0 $, $ T $ est l'arbre plan réduit à sa racine, il y a donc une unique façon de le numéroter, en $ \tdun{1} $. Ceci démontre la formule au rang $ 0 $.\\

Si $ k=1 $, $ T = B(\tun, T_{1}, \hdots , T_{n}) $, avec $ T_{1}=B(T_{1,1}, \hdots T_{1,m_{1}}), \hdots , T_{n}=B(T_{n,1}, \hdots , T_{n,m_{n}}) $ (si $ T_{i} = \tun $, $ T_{i} = B(T_{i,1}) $ avec $ T_{i,1}=1 $). Il y a alors deux cas possibles pour l'indexation de $ T $:
\begin{enumerate}
\item Si l'indice de la racine de $ T $ est plus petit que l'indice de son descendant direct le plus à gauche. Alors,
$$ T=B^{+}(\hdots B^{+}(B^{+}(\tdun{1}, 1), T_{1,1} , \hdots , T_{1,m_{1}}), \hdots ), T_{n,1} , \hdots, T_{n,m_{n}}) ,$$
avec, d'après le lemme \ref{1}, $ T_{1,1}, \hdots , T_{n,m_{n}} \in \mathcal{G}^{0} $, et donc une unique façon d'indexer les sommets de $ T_{1,1}, \hdots , T_{n,m_{n}} $. Ainsi $ T $ appartient à $ \mathcal{G}^{0} \subseteq \mathcal{G} $ et il y a, dans ce cas, une seule façon de numéroter les sommets de $ T $.
\item Si l'indice de la racine de $ T $ est plus grand que l'indice de son descendant direct le plus à gauche. Alors, pour tout $ 1 \leq i \leq n $,
$$ \overbrace{B^{+}(\hdots B^{+}}^{n-i ~ {\rm fois}}(B^{-}(\tdun{1}, T_{1} , \hdots , T_{i}), T_{i+1,1} , \hdots , T_{i+1,m_{i+1}}), \hdots ), T_{n,1} , \hdots, T_{n,m_{n}}) \in \mathcal{G} .$$
où, toujours avec le lemme \ref{1}, on doit avoir $ T_{1}, \hdots , T_{i}, T_{i+1,1}, \hdots , T_{n,m_{n}} \in \mathcal{G}^{0} $ et il y a donc une unique façon d'indexer leurs sommets. Ainsi, cela fait $ n $ façons de numéroter les sommets de $ T $ dans ce cas.
\end{enumerate}
Au final, il y a $ 1+n $ façons d'indexer $ T $ pour obtenir un élément de $ \mathcal{G} $. Le cas $ k=1 $ est démontré car $ n $ est égal à la fertilité de la racine de $ T $.\\

Si $ k \geq 2 $, $ T = B(T_{1}, \hdots , T_{n}) $, avec $ T_{1}=B(T_{1,1}, \hdots T_{1,m_{1}}), \hdots , T_{n}=B(T_{n,1}, \hdots , T_{n,m_{n}}) $ (si $ T_{i} = \tun $, $ T_{i} = B(T_{i,1}) $ avec $ T_{i,1}=1 $). Il y a deux cas possibles pour indexer les sommets de $ T $:
\begin{enumerate}
\item Si l'indice de la racine de $ T $ est plus petit que l'indice de son descendant direct le plus à gauche. Alors, pour que $ T \in \mathcal{G} $,
$$ T=B^{+}(\hdots B^{+}(B^{+}(\tdun{1}, T_{1,1} , \hdots , T_{1,m_{1}}), \hdots ), T_{n,1} , \hdots, T_{n,m_{n}}) .$$
Avec le lemme \ref{1}, on doit avoir $ T_{1,1}, \hdots , T_{n,m_{n}} \in \mathcal{G}^{0} $, et il y a une unique façon d'indexer leurs sommets. On a ainsi une unique façon de numéroter $ T $ dans ce cas.
\item Si l'indice de la racine de $ T $ est plus grand que l'indice de son descendant direct le plus à gauche. Deux sous-cas sont possibles:
\begin{enumerate}
\item Si l'indice de la racine de $ T_{1} $ est plus petit que l'indice de son descendant direct le plus à gauche, c'est-à-dire si $ T_{1} = B^{+}(\hdots B^{+}(\tdun{1}, \hdots), \hdots) $, alors $ T_{1} \in \mathcal{G}^{0} $ (avec le lemme \ref{1}), et il y a une seule possibilité d'indexer $ T_{1} $. Pour tout $ 1 \leq i \leq n $,
$$ \overbrace{B^{+}(\hdots B^{+}}^{n-i ~ {\rm fois}}(B^{-}(T_{1} , \hdots , T_{i}), T_{i+1,1} , \hdots , T_{i+1,m_{i+1}}), \hdots ), T_{n,1} , \hdots, T_{n,m_{n}}) \in \mathcal{G} .$$
D'après le lemme \ref{1}, $ T_{2}, \hdots , T_{i}, T_{i+1,1}, \hdots , T_{n,m_{n}} \in \mathcal{G}^{0} $, donc il y a une unique façon de les indexer. On a ainsi $ n $ possibilités pour indexer $ T $ dans ce cas.
\item Si l'indice de la racine de $ T_{1} $ est plus grande que l'indice de son descendant direct le plus à gauche, c'est-à-dire si
$$ T_{1} = \overbrace{B^{+}(\hdots B^{+}}^{m_{1}-i ~ {\rm fois}}(B^{-}(T_{1,1} , \hdots , T_{1,i}), \hdots ), \hdots ) ,$$
pour un $ 1 \leq i \leq m_{1} $. Par hypothèse de récurrence, il y a $ 1+\displaystyle\sum^{k-1}_{i=2} \prod_{l_{T} \twoheadrightarrow v ~ {\rm et} ~ 2 \leq h(v) \leq i} f_{v} $ façons de numéroter $ T_{1,1} $ pour obtenir un élément de $ \mathcal{G} $. Toujours avec le lemme  \ref{1}, il y a une unique façon de numéroter $ T_{1,2}, \hdots T_{1,m_{1}} $ car on doit nécessairement avoir après indexation $ T_{1,2}, \hdots T_{1,m_{1}} \in \mathcal{G}^{0} $. Cela donne $ m_{1}  \left( 1+\displaystyle\sum^{k-1}_{i=2} \prod_{l_{T} \twoheadrightarrow v ~ {\rm et} ~ 2 \leq h(v) \leq i} f_{v} \right) $ possibilités pour indexer $ T_{1} $. Comme l'indice de la racine de $ T $ doit être plus grande que l'indice de son descendant directe le plus à gauche, pour que $ T \in \mathcal{G} $, on doit avoir
$$ T = B^{+}( \hdots B^{+}(B^{-}(T_{1}, \hdots, T_{i}),T_{i+1,1}, \hdots, T_{i+1,m_{i+1}}), \hdots ), T_{n,1} , \hdots, T_{n,m_{n}}) ,$$
où $ 1 \leq i \leq n $. D'après le lemme \ref{1}, $ T_{2}, \hdots , T_{i}, T_{i+1,1}, \hdots , T_{n,m_{n}} \in \mathcal{G}^{0} $ ont une unique indexation possible. Ainsi, dans ce cas, il y a $ n m_{1} \left( 1+\displaystyle\sum^{k-1}_{i=2} \prod_{l_{T} \twoheadrightarrow v ~ {\rm et} ~ 2 \leq h(v) \leq i} f_{v} \right) $ possibilités d'indexer $ T $.
\end{enumerate}
\end{enumerate}
En tout, cela fait bien
$$ 1+\displaystyle\sum^{k-1}_{i=0} \prod_{l_{T} \twoheadrightarrow v ~ {\rm et} ~ h(v) \leq i} f_{v} $$
possibilités pour indexer les sommets de $ T $. Le principe de récurrence permet de conclure.
\end{proof}
\\

Dans le cas des échelles, il existe un résultat plus précis :

\begin{cor}
Pour tout $ n \geq 1 $ et pour tout $ i \in \left\lbrace 1, \hdots , n \right\rbrace  $, il existe une unique échelle (de degré $ n $) dans chaque sous-ensemble $ \mathcal{G}^{(\overbrace{+, \hdots ,+}^{i {\rm ~ fois}} ,\overbrace{-, \hdots ,-}^{n-i {\rm ~ fois}})} $.
\end{cor}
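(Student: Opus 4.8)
The plan is to establish existence and uniqueness separately, both by induction on the number $n-i$ of minus signs in the exponent, writing $\mathcal{G}^{(+^{i},-^{n-i})}$ for the set in the statement (exponent: $i$ signs $+$, then $n-i$ signs $-$). The base case $n-i=0$ comes straight from the bijection between the trees of $\mathcal{H}_{PR}$ and the trees of $\mathcal{G}^{0}$ recalled just before Proposition \ref{2}: since $\mathcal{G}^{0}$ is the disjoint union of the $\mathcal{G}^{(+^{m})}$ and the elements of $\mathcal{G}^{(+^{m})}$ all have degree $m$, the trees of $\mathcal{G}^{(+^{i})}$ are exactly the degree-$i$ trees of $\mathcal{G}^{0}$, and under that bijection exactly one of them has the (unique) planar ladder of degree $i$ as underlying planar tree. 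Hence $\mathcal{G}^{(+^{i})}$ contains exactly one \'echelle, which I denote $F_{i}$; this is already the assertion for $i=n$.

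For the existence step when $n-i\ge 1$, I would set $L_{i,n}:=(B^{-})^{n-i}(F_{i})$, that is, iterate $B^{-}$, applied each time to a single tree viewed as a one-term sequence, starting from $F_{i}$. On one hand, $B^{-}$ applied to a single \'echelle $T$ is again an \'echelle, since one merely adds a new root below $T$; an immediate induction then shows $L_{i,n}$ is an \'echelle of degree $n$. On the other hand, re-reading the construction of the sets $\mathcal{G}^{(\underline{\varepsilon})}$: from $F_{i}\in\mathcal{G}^{(+^{i})}$, which is a single tree, the rule for $\varepsilon_{i+1}=-$ produces $B^{-}(F_{i})\in\mathcal{G}^{(+^{i},-)}$, and iterating (the result at each stage being again a single tree) yields $(B^{-})^{n-i}(F_{i})\in\mathcal{G}^{(+^{i},-^{n-i})}$.

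For the uniqueness step when $n-i\ge 1$, let $L$ be any \'echelle in $\mathcal{G}^{(+^{i},-^{n-i})}$; its degree is $n\ge 2$. Since $\varepsilon_{n}=-$, the description (\ref{union}) of the construction forces $L=B^{-}(T_{1},\dots,T_{m})$ for some forest $T_{1}\dots T_{m}\in\mathcal{G}^{(+^{i},-^{n-i-1})}$. The root of $L$, indexed by $n$, then has fertility $m$; as $L$ is an \'echelle of degree $\ge 2$ this forces $m=1$, so $L=B^{-}(T_{1})$ with $T_{1}$ a tree of degree $n-1$, and $T_{1}$, being $L$ with its root removed, is again an \'echelle, lying in $\mathcal{G}^{(+^{i},-^{n-i-1})}$ and uniquely determined by $L$. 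By the induction hypothesis $T_{1}=L_{i,n-1}$, whence $L=B^{-}(L_{i,n-1})=L_{i,n}$.

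The argument is largely mechanical once the base case is in hand; the only points needing a little care are the two ``single-tree'' observations — that $F_{i}$ really is one tree, so that the tower $(B^{-})^{n-i}(F_{i})$ stays inside the construction of $\mathcal{G}$, and conversely that an \'echelle of degree $\ge 2$ whose largest index sits at the root must be a $B^{-}$ of a \emph{single} tree (the forest fed to the final $B^{-}$ cannot have length $\ge 2$, else the root would have fertility $\ge 2$). I do not expect any genuine obstacle beyond this; as a consistency check, Proposition \ref{2} predicts exactly $n$ indexations of the planar ladder of degree $n$ inside $\mathcal{G}$, which matches the $n$ sets $\mathcal{G}^{(+^{i},-^{n-i})}$, $1\le i\le n$.
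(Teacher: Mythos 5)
Your proof is correct, but it takes a genuinely different route from the paper's for the uniqueness half. The paper argues by induction on the degree $n$: it exhibits, exactly as you do, the ladders $B^{-}(T_{i})$ for $i<n$ (plus $B^{+}(\tdun{1},T)$ for the all-plus case, where you instead invoke the bijection with $\mathcal{H}_{PR}$ as base case), notes they are pairwise distinct by the lemme \ref{prelim}, and then concludes by a counting argument: the proposition \ref{2} gives exactly $n$ ladders of degree $n$ in all of $\mathcal{G}$ (since for a ladder all fertilities equal $1$ and $h(l_{T})=n-1$), so each of the $n$ sets can contain only one. You avoid that global enumeration entirely and prove uniqueness structurally, by induction on the number of minus signs: a ladder of degree $\geq 2$ whose root carries the largest index must be $B^{-}$ of a single tree (root fertility $1$), which is itself a ladder in the preceding set, hence unique by induction. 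Your argument only needs the uniqueness of indexation in $\mathcal{G}^{0}$ (the paragraph before the proposition \ref{2}) rather than the full counting formula, so it is more self-contained and would survive even if one did not know the exact count of indexations; the paper's version is shorter once the proposition \ref{2} is available and, as you note in your consistency check, the two are reconciled by the value $n$ that the formula takes on ladders.
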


\begin{proof}
Raisonnons par récurrence sur $ n $. Le résultat étant trivial pour $ n=1,2 $, supposons $ n \geq 3 $. D'après l'hypothèse de récurrence, pour tout $ i \in \left\lbrace 1, \hdots , n-1 \right\rbrace  $, il existe une unique échelle $ T_{i} $ de degré $ n-1 $ dans chaque sous-ensemble $ \mathcal{G}^{(\overbrace{+, \hdots ,+}^{i {\rm ~ fois}} ,\overbrace{-, \hdots ,-}^{n-1-i {\rm ~ fois}})} $. En réalisant un $ B^{-} $, on construit $ n-1 $ échelles $ E_{i} = B^{-}(T_{i}) $ de degré $ n $, et $ \forall ~ i \in \left\lbrace 1, \hdots , n-1 \right\rbrace $, $ E_{i} \in \mathcal{G}^{(\overbrace{+, \hdots ,+}^{i {\rm ~ fois}} ,\overbrace{-, \hdots ,-}^{n-i {\rm ~ fois}})} $. De plus, toujours par hypothèse de récurrence, il existe une unique échelle $ T $ de degré $ n-2 $ appartenant à $ \mathcal{G}^{0} $. Alors, $ \tdun{1} T \in \mathcal{G}^{0} $, en utilisant le lemme \ref{magma}, et donc $ E_{n} = B^{+}(\tdun{1},T) \in \mathcal{G}^{0} $ et c'est par construction une échelle.\\

Donc $ \forall ~ i \in \left\lbrace 1, \hdots , n \right\rbrace $, $ E_{i} \in \mathcal{G}^{(\overbrace{+, \hdots ,+}^{i {\rm ~ fois}} ,\overbrace{-, \hdots ,-}^{n-i {\rm ~ fois}})} $. D'après le lemme \ref{prelim}, les échelles $ E_{i} $ sont toutes distinctes. Comme il existe exactement $ n $ échelles de degré $ n $ dans $ \mathcal{G} $ (d'après la proposition \ref{2}), il y a donc une unique échelle dans chaque sous-ensemble $ \mathcal{G}^{(\overbrace{+, \hdots ,+}^{i {\rm ~ fois}} ,\overbrace{-, \hdots ,-}^{n-i {\rm ~ fois}})} $, pour $ i \in \left\lbrace 1, \hdots , n \right\rbrace  $. Le résultat est ainsi démontré au rang $ n $ et on conclut par le principe de récurrence.
\end{proof}
\\

\subsection{L'algèbre de Hopf $ \mathcal{B}^{\infty} $}

Notons $ \mathcal{B}^{\infty} = \mathbb{K} [\mathcal{G} \cup \left\lbrace 1 \right\rbrace] $ l'algèbre engendrée par $ \mathcal{G} \cup \left\lbrace 1 \right\rbrace $. D'après le lemme \ref{1}, $ \mathcal{B}^{\infty} $ est engendrée librement par les arbres appartenant à $ \mathcal{G} $.\\

Nous avons le résultat remarquable suivant:

\begin{prop}
L'algèbre $ \mathcal{B}^{\infty} $ est une algèbre de Hopf.
\end{prop}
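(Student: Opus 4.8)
The plan is to exhibit $\mathcal{B}^{\infty}$ as a graded connected sub-bialgebra of the bialgebra $\mathcal{H}_{o}$ of ordered forests. By construction $\mathcal{B}^{\infty}$ is a subalgebra of $\mathcal{H}_{o}$ containing $1$ (it is spanned by the ordered forests all of whose connected components lie in $\mathcal{G}$), it is $\mathbb{N}$-graded by the degree of forests with $(\mathcal{B}^{\infty})_{0}=\mathbb{K}$ and finite-dimensional homogeneous components, and the counit of $\mathcal{H}_{o}$ restricts to it trivially. Hence, once we know that $\Delta(\mathcal{B}^{\infty})\subseteq\mathcal{B}^{\infty}\otimes\mathcal{B}^{\infty}$, the space $\mathcal{B}^{\infty}$ is a connected graded bialgebra, and therefore automatically a Hopf algebra (the antipode is produced by the usual recursion on the degree). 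Since $\Delta$ is an algebra morphism on $\mathcal{H}_{o}$ and, by the remark following Lemma \ref{1}, $\mathcal{B}^{\infty}$ is the free associative algebra on the trees of $\mathcal{G}$, it suffices to prove $\Delta(T)\in\mathcal{B}^{\infty}\otimes\mathcal{B}^{\infty}$ for every tree $T\in\mathcal{G}$.

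I would prove this by induction on $n=|T|$, establishing simultaneously the slightly stronger statement that for every tree $T\in\mathcal{G}$ with $|T|\leq n$ and every admissible cut $\boldsymbol{v}\models V(T)$ one has $Lea_{\boldsymbol{v}}(T)\in\mathcal{B}^{\infty}$ and $Roo_{\boldsymbol{v}}(T)\in\mathcal{G}\cup\{1\}$ (note that $Roo_{\boldsymbol{v}}(T)$ is again a single tree, or $1$, when $T$ is a tree, since it contains the parent of each of its vertices). The cases $n\leq 1$ are immediate. For $n\geq 2$, the construction of $\mathcal{G}$ — formula (\ref{union}) together with Lemma \ref{1} — shows that $T=B^{-}(T_{1},\ldots,T_{m})$ or $T=B^{+}(T_{1},\ldots,T_{m})$ for some forest $T_{1}\cdots T_{m}\in\mathcal{G}$ of degree $n-1$; in particular $T_{1}\in\mathcal{G}$ and $T_{2},\ldots,T_{m}\in\mathcal{G}^{0}$. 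I would then plug this into the two identities for $\Delta\circ B^{\pm}$ recorded in the remark preceding Lemma \ref{comod},
\begin{align*}
\Delta(B^{-}(T_{1},\ldots,T_{m})) &= (Id\otimes B^{-})\circ\Delta(T_{1}\cdots T_{m}) + B^{-}(T_{1},\ldots,T_{m})\otimes 1, \\
\Delta(B^{+}(T_{1},\ldots,T_{m})) &= (Id\otimes B^{+})\circ\Delta_{\boldsymbol{l}}(T_{1}\cdots T_{m}) + B^{+}(T_{1},\ldots,T_{m})\otimes 1 \\
&\qquad {}+ \Delta_{\boldsymbol{l}}(T_{1})\cdot(B^{-}(T_{2},\ldots,T_{m})\otimes 1),
\end{align*}
using also that $\Delta_{\boldsymbol{l}}(T_{1}\cdots T_{m})=\Delta_{\boldsymbol{l}}(T_{1})\cdot\Delta(T_{2}\cdots T_{m})$.

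In $\Delta(T_{1}\cdots T_{m})=\Delta(T_{1})\cdots\Delta(T_{m})$ every right-hand tensor factor is a concatenation $Roo_{\boldsymbol{w}}(T_{1})\cdots Roo_{\boldsymbol{w}}(T_{m})$: for $j\geq 2$ we have $T_{j}\in\mathcal{G}^{0}$, so Lemma \ref{comod} gives $Roo_{\boldsymbol{w}}(T_{j})\in\mathcal{G}^{0}\cup\{1\}$, while for $j=1$ the induction hypothesis (applicable since $|T_{1}|\leq n-1$) gives $Roo_{\boldsymbol{w}}(T_{1})\in\mathcal{G}\cup\{1\}$; by Lemma \ref{1} (and Lemma \ref{magma} when $Roo_{\boldsymbol{w}}(T_{1})=1$) the whole concatenation lies in $\mathcal{G}\cup\{1\}$, so applying $B^{+}$ or $B^{-}$ to it yields an element of $\mathcal{G}$, because the construction of $\mathcal{G}$ shows that $B^{\pm}(F)\in\mathcal{G}$ whenever $F\in\mathcal{G}\cup\{1\}$. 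The left-hand factors $Lea_{\boldsymbol{w}}(T_{1})\cdots Lea_{\boldsymbol{w}}(T_{m})$ lie in $\mathcal{B}^{\infty}$ by the induction hypothesis applied to each $T_{j}\in\mathcal{G}$ (using $\mathcal{G}^{0}\subseteq\mathcal{G}$ for $j\geq 2$), and in the last term of the $B^{+}$ formula the factor $B^{-}(T_{2},\ldots,T_{m})$ is a tree of $\mathcal{G}$ because $T_{2}\cdots T_{m}\in\mathcal{G}^{0}$ by Lemma \ref{magma}; hence every term of $\Delta(T)$ lies in $\mathcal{B}^{\infty}\otimes\mathcal{B}^{\infty}$. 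Tracking the same data through the two formulas — i.e. splitting a cut of $T$ according to whether the root, resp. the grafted vertex, survives — shows moreover that each $Roo$-component of $\Delta(T)$ lies in $\mathcal{G}\cup\{1\}$ and each $Lea$-component lies in $\mathcal{B}^{\infty}$, which closes the induction.

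The delicate point is precisely this bookkeeping for $B^{+}$: since $\mathcal{G}$ is \emph{not} stable under concatenation (as noted after Lemma \ref{magma}), one cannot simply argue that an arbitrary product of trees of $\mathcal{G}$ lies in $\mathcal{B}^{\infty}$ and stop there — one genuinely needs the asymmetric gluing criterion of Lemma \ref{1} (first component in $\mathcal{G}$, the remaining ones in $\mathcal{G}^{0}$) together with the stability result Lemma \ref{comod}, and the extra term $\Delta_{\boldsymbol{l}}(T_{1})\cdot(B^{-}(T_{2},\ldots,T_{m})\otimes 1)$, which has no analogue in the $B^{-}$ formula, must be treated separately, exactly because it reinserts a $B^{-}$-tree on the left. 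Everything else (restriction of the counit, graded connectedness, existence of the antipode) is routine.
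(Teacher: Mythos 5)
Your proof is correct and follows essentially the same route as the paper's: an induction on the degree of a tree of $\mathcal{G}$, split according to whether the last operation applied is $B^{-}$ or $B^{+}$, and resting on Lemmas \ref{1}, \ref{magma} and \ref{comod} to place the $Roo$-parts in $\mathcal{G}\cup\{1\}$ and the $Lea$-parts in $\mathcal{B}^{\infty}$. The only difference is presentational: the paper first reduces to simple cuts and enumerates where the cut edge can sit, whereas you treat all admissible cuts at once through the recursive formulas for $\Delta\circ B^{\pm}$ stated in the remark preceding Lemma \ref{comod}; the resulting case analyses match term by term.
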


\begin{proof}
Il suffit de montrer que, en réalisant une coupe simple d'un arbre appartenant à $ \mathcal{G} $, la branche et le tronc appartiennent respectivement à $ \mathcal{G} $ et $ \mathcal{G} \cup \left\lbrace 1 \right\rbrace $. En effet, si ce résultat est démontré, on aura alors le résultat pour une coupe admissible quelconque puisque $ \mathcal{B}^{\infty} $ est engendrée par $ \mathcal{G} \cup \left\lbrace 1 \right\rbrace $ comme algèbre. Travaillons par récurrence sur le degré des arbres. Le résultat est trivial pour $ n=2,3 $. Au rang $ n \geq 4 $, considérons un arbre $ T \in \mathcal{G} $ de degré $ n $, et $ \boldsymbol{v} \models V(T) $ une coupe simple. Il y a deux cas possibles:
\begin{enumerate}
\item Si l'arbre est de la forme $ T = B^{-}(T_{1},\hdots,T_{m}) \in \mathcal{G}^{(\hdots,-)} $. Par construction, la forêt $ T_{1} \hdots T_{m} \in \mathcal{G} $ et, avec le lemme \ref{1}, $ T_{1} \in \mathcal{G} $ et $ T_{2}, \hdots ,T_{m} \in \mathcal{G}^{0} $. Si $ \boldsymbol{v} $ est la coupe totale, le résultat est trivial. Sinon, comme $ \boldsymbol{v} $ est une coupe simple de $ T $, il existe un unique $ i \in \left\lbrace 1,\hdots,m \right\rbrace $ tel que $ \boldsymbol{v} \models V(T_{i}) $ (on inclut ici le cas de la coupe totale qui correspond à couper l'arête entre la racine de $ T $ et celle de $ T_{i} $). Par récurrence, $ Lea_{\boldsymbol{v}}(T_{i}) $ appartient à $ \mathcal{G} $, car $ T_{i} \in \mathcal{G} $. De même, par récurrence, $ Roo_{\boldsymbol{v}}(T_{i}) $ appartient à $ \mathcal{G} \cup \left\lbrace 1 \right\rbrace $. Alors la forêt $ T_{1} \hdots Roo_{\boldsymbol{v}}(T_{i}) \hdots T_{m} \in \mathcal{G} \cup \left\lbrace 1 \right\rbrace $ car :
\begin{enumerate}
\item si $ i=1 $, $ Roo_{\boldsymbol{v}}(T_{1}) \in \mathcal{G} \cup \left\lbrace 1 \right\rbrace $ et comme $ T_{2}, \hdots ,T_{m} \in \mathcal{G}^{0} $, $ Roo_{\boldsymbol{v}}(T_{1}) T_{2} \hdots T_{m} \in \mathcal{G} \cup \{ 1 \} $ en utilisant le lemme \ref{1}.
\item si $ i \geq 2 $, $ T_{i} \in \mathcal{G}^{0} $ donc, d'après le lemme \ref{comod}, $ Roo_{\boldsymbol{v}}(T_{i}) $ appartient à $ \mathcal{G}^{0} \cup \left\lbrace 1 \right\rbrace $. Ainsi, toujours avec le lemme \ref{1}, la forêt $ T_{1} \hdots Roo_{\boldsymbol{v}}(T_{i}) \hdots T_{m} $ appartient à $ \mathcal{G} $.
\end{enumerate}
Donc $ Lea_{\boldsymbol{v}}(T) = Lea_{\boldsymbol{v}}(T_{i}) \in \mathcal{G} $ et $ Roo_{\boldsymbol{v}}(T) = B^{-}(T_{1}, \hdots ,Roo_{\boldsymbol{v}}(T_{i}), \hdots ,T_{m}) \in \mathcal{G} \cup \left\lbrace 1 \right\rbrace $.
\item Si l'arbre est de la forme $ T = B^{+}(T_{1},\hdots,T_{m}) \in \mathcal{G}^{(\hdots,+)} $. Par construction, la forêt $ T_{1} \hdots T_{m} \in \mathcal{G} $, donc $ T_{1} \in \mathcal{G} $ et $ T_{2}, \hdots ,T_{m} \in \mathcal{G}^{0} $. Si $ \boldsymbol{v} $ est la coupe simple correspondant à couper l'arête joignant la racine de $ T $ (qui est la racine de $ T_{1} $) et le sommet indexé par $ n $ joignant les racines communes de $ T_{2}, \hdots , T_{m} $, alors $ Roo_{\boldsymbol{v}}(T) = T_{1} \in \mathcal{G} $ et $ Lea_{\boldsymbol{v}}(T) = B^{-}(T_{2}, \hdots ,T_{m}) \in \mathcal{G} $. Le résultat est donc vérifié dans ce cas. Sinon, comme $ \boldsymbol{v} $ est une coupe simple de $ T $, il existe un unique $ i \in \left\lbrace 1,\hdots,m \right\rbrace $ tel que $ \boldsymbol{v} \models V(T_{i}) $ (si $ i \geq 2 $, le cas de la coupe totale correspond à couper l'arête entre le sommet de $ T $ indexé par $ n $ et la racine de $ T_{i} $). Il y a alors deux cas à distinguer:
\begin{enumerate}
\item Si $ i=1 $, c'est-à-dire si $ \boldsymbol{v} \models V(T_{1}) $. Si $ \boldsymbol{v} $ est totale, alors $ Lea_{\boldsymbol{v}}(T) = T $ et $ Roo_{\boldsymbol{v}}(T) = 1 $ et le résultat est trivial. Sinon, par récurrence, $ Lea_{\boldsymbol{v}}(T_{1}) \in \mathcal{G} $, et $ Roo_{\boldsymbol{v}}(T_{1}) $ étant un arbre non vide $ Roo_{\boldsymbol{v}}(T_{1}) \in \mathcal{G} $. Ainsi, avec le lemme \ref{1}, $ Roo_{\boldsymbol{v}}(T_{1}) T_{2} \hdots T_{m} \in \mathcal{G} $. D'où $ Roo_{\boldsymbol{v}}(T) = B^{+}(Roo_{\boldsymbol{v}}(T_{1}), T_{2}, \hdots ,T_{m}) $ et $ Lea_{\boldsymbol{v}}(T) = Lea_{\boldsymbol{v}}(T_{1}) $ appartiennent à $ \mathcal{G} $.
\item Si $ i \geq 2 $, c'est-à-dire si $ \boldsymbol{v} \models V(T_{i}) $, toujours par récurrence, $ Lea_{\boldsymbol{v}}(T_{i}) \in \mathcal{G} $ et avec le lemme \ref{comod}, $ Roo_{\boldsymbol{v}}(T_{i}) \in \mathcal{G}^{0} \cup \left\lbrace 1 \right\rbrace $. Donc la forêt $ T_{1} \hdots Roo_{\boldsymbol{v}}(T_{i}) \hdots T_{m} \in \mathcal{G} $. Ainsi $ Roo_{\boldsymbol{v}}(T) = B^{+}(T_{1}, \hdots ,Roo_{\boldsymbol{v}}(T_{i}), \hdots ,T_{m}) $ et $ Lea_{\boldsymbol{v}}(T) = Lea_{\boldsymbol{v}}(T_{i}) $ sont des éléments de $ \mathcal{G} $.
\end{enumerate}
Dans tous les cas, $ Roo_{\boldsymbol{v}}(T) \in \mathcal{G} \cup \left\lbrace 1 \right\rbrace $, $ Lea_{\boldsymbol{v}}(T) \in \mathcal{G} $.
\end{enumerate}
Par récurrence, le résultat est démontré.
\end{proof}

\begin{prop}
La série formelle de l'algèbre de Hopf $ \mathcal{B}^{\infty} $ est donnée par la formule:
$$ F_{\mathcal{B}^{\infty}}(x) = \dfrac{1}{2 \sqrt{1-4x}} + \dfrac{1}{2} .$$
\end{prop}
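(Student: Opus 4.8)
The plan is to reduce the statement to a counting problem and then solve it with a bivariate generating function. Since $\mathcal{B}^{\infty} = \mathbb{K}[\mathcal{G} \cup \{1\}]$ is graded by the degree and has $\mathcal{G} \cup \{1\}$ as a basis (by Lemma \ref{prelim} and the remarks following its proof, the forests of $\mathcal{G}$ are pairwise distinct), we have $F_{\mathcal{B}^{\infty}}(x) = 1 + \sum_{n \geq 1} |\mathcal{G}_{n}|\, x^{n}$, where $\mathcal{G}_{n}$ is the set of forests of $\mathcal{G}$ of degree $n$. So it suffices to compute $|\mathcal{G}_{n}|$. The obstruction to a direct induction is that, in the recursive description (\ref{union}) of $\mathcal{G}^{(\underline{\varepsilon})}$, the number of forests produced from a given $F' = T_{1} \hdots T_{m}$ depends on $m$ (it is $1$ when $\varepsilon_{n} = -$ and $m+1$ when $\varepsilon_{n} = +$); hence I would enumerate $\mathcal{G}_{n}$ while keeping track of the number $\ell(F)$ of connected components of $F$, i.e. work with the polynomial $Q_{n}(y) = \sum_{F \in \mathcal{G}_{n}} y^{\ell(F)}$.

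First I would establish a recursion for $Q_{n}$. By Lemma \ref{prelim} and the remarks after its proof, $\mathcal{G}_{n}$ is the disjoint union of the sets $S_{-}(F') = \{ B^{-}(T_{1}, \hdots, T_{m}) \}$ and $S_{+}(F') = \{ T_{1} \hdots T_{m}\, \tdun{n},\ B^{+}(T_{1}, \hdots, T_{m}),\ T_{1} B^{+}(T_{2}, \hdots, T_{m}),\ \hdots,\ T_{1} \hdots T_{m-1} B^{+}(T_{m}) \}$, as $F' = T_{1} \hdots T_{m}$ ranges over $\mathcal{G}_{n-1}$. The forest of $S_{-}(F')$ has length $1$, and the $m+1$ forests of $S_{+}(F')$ have lengths $1, 2, \hdots, m+1$. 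Summing $y^{\ell}$ over these contributions (splitting off the $B^{-}$ part and using $y + y^{2} + \cdots + y^{m+1} = \tfrac{y(y^{m+1}-1)}{y-1}$ for the $B^{+}$ part) yields, for $n \geq 2$,
$$ Q_{n}(y) = Q_{n-1}(1)\, y + \frac{y}{y-1} \bigl( y\, Q_{n-1}(y) - Q_{n-1}(1) \bigr), \qquad Q_{1}(y) = y , $$
which one checks against the explicit lists ($Q_{2}(y) = y^{2} + 2y$, $Q_{3}(y) = y^{3} + 3y^{2} + 6y$).

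Then I would solve the recursion by the kernel method. Setting $\Phi(x,y) = \sum_{n \geq 1} Q_{n}(y)\, x^{n}$, the recursion becomes the functional equation
$$ \Phi(x,y)\, (y - 1 - xy^{2}) = xy(y-1) + xy(y-2)\, \Phi(x,1) . $$
The kernel $xy^{2} - y + 1$ vanishes at the formal power series $Y = Y(x) = \dfrac{1 - \sqrt{1-4x}}{2x} = \dfrac{2}{1 + \sqrt{1-4x}}$ (with $Y(0) = 1$), and $\Phi(x, Y(x))$ is a well-defined element of $\mathbb{K}[[x]]$ since $Q_{n}(Y(x))\, x^{n}$ has valuation $\geq n$; substituting $y = Y$ kills the left-hand side, so $(Y-1) + (Y-2)\, \Phi(x,1) = 0$, that is $\Phi(x,1) = \dfrac{Y-1}{2-Y}$. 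Writing $s = \sqrt{1-4x}$ and $Y = \dfrac{2}{1+s}$ gives $Y - 1 = \dfrac{1-s}{1+s}$ and $2 - Y = \dfrac{2s}{1+s}$, whence $\Phi(x,1) = \dfrac{1-s}{2s} = \dfrac{1}{2\sqrt{1-4x}} - \dfrac{1}{2}$, and finally
$$ F_{\mathcal{B}^{\infty}}(x) = 1 + \Phi(x,1) = \frac{1}{2\sqrt{1-4x}} + \frac{1}{2} . $$

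I expect the main difficulty to be the first step: realizing that a self-referential recursion is obtained only after refining the count of $\mathcal{G}_{n}$ by the number of trees of each forest (the bare cardinalities $|\mathcal{G}_n|$ do not satisfy a closed recursion, since passing from degree $n-1$ to degree $n$ via $B^{+}$ involves the whole length distribution). Once $Q_{n}(y)$ is introduced, the remainder is a routine generating-function computation. The minor points to be careful about are that all the unions occurring in the construction of $\mathcal{G}_{n}$ are genuinely disjoint — this is exactly the content of Lemma \ref{prelim} and the remarks after it — and that $Y(x)$ is the branch of the kernel lying in $\mathbb{K}[[x]]$.
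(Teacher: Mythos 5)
Your proof is correct, and its combinatorial core --- refining the count of $\mathcal{G}_{n}$ by the number of connected components, which is forced because the step $F' \mapsto S_{+}(F')$ produces $\ell(F')+1$ forests, one of each length $1,\hdots,\ell(F')+1$ --- is exactly the refinement the paper uses: its $f_{k,n}^{\mathcal{B}^{\infty}}$ is the coefficient of $y^{k}$ in your $Q_{n}(y)$, and its relations $f_{1,n}^{\mathcal{B}^{\infty}}=2f_{n-1}^{\mathcal{B}^{\infty}}$ and $f_{k,n}^{\mathcal{B}^{\infty}}=f_{k-1,n-1}^{\mathcal{B}^{\infty}}+\cdots+f_{n-1,n-1}^{\mathcal{B}^{\infty}}$ are your recursion for $Q_{n}$ read off coefficient by coefficient. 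Where you genuinely diverge is in how this refined recursion is solved. The paper works with the univariate series $F_{k}(x)=\sum_{i}f_{k,i}^{\mathcal{B}^{\infty}}x^{i}$, derives the three-term relation $F_{l+2}-F_{l+1}+xF_{l}=0$, solves it as a linear recurrence in $l$ with characteristic roots $\frac{1\pm\sqrt{1-4x}}{2}$, and kills the unwanted branch by the valuation argument $F_{l}(x)=\mathcal{O}(x^{l})$; you instead assemble everything into the bivariate series $\Phi(x,y)$ and apply the kernel method, selecting the power-series root $Y$ of the kernel $xy^{2}-y+1$. The two are essentially dual (the kernel is the characteristic polynomial $\lambda^{2}-\lambda+x$ under $\lambda=xy$, and choosing the branch with $Y(0)=1$ plays precisely the role of the paper's $B(x)=0$), but your route extracts $\Phi(x,1)$ in a single substitution rather than resumming the $F_{k}$ afterwards, at the small cost of the summability check for $\Phi(x,Y(x))$, which you justify correctly. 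Both arguments rest on the same disjointness facts from Lemma \ref{prelim} and the remark following it, and you cite them where they are needed.
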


\begin{proof}
Pour calculer la série formelle de l'algèbre $ \mathcal{B}^{\infty} $, nous introduisons quelques notations. Posons $ f_{i,j}^{\mathcal{B}^{\infty}} $ le nombre de forêts de longueur $ i $ et de degré $ j $, et $ f_{j}^{\mathcal{B}^{\infty}} $ le nombre de forêts de degré $ j $. En particulier, $ f_{j}^{\mathcal{B}^{\infty}}=\displaystyle\sum_{1 \leq i \leq j} f_{i,j}^{\mathcal{B}^{\infty}} $. Par construction de $ \mathcal{B}^{\infty} $, on a les relations suivantes:

\begin{eqnarray*}
f_{1,1}^{\mathcal{B}^{\infty}} & = & 1 \\
f_{k,1}^{\mathcal{B}^{\infty}} & = & 0 ~ {\rm si} ~ k \geq 2 \\
f_{1,n}^{\mathcal{B}^{\infty}} & = & 2 f_{n-1}^{\mathcal{B}^{\infty}} \\
f_{k,n}^{\mathcal{B}^{\infty}} & = & f_{k-1,n-1}^{\mathcal{B}^{\infty}} + \hdots + f_{n-1,n-1}^{\mathcal{B}^{\infty}}  ~ {\rm si} ~ k \geq 2 , n \geq 2. 
\end{eqnarray*}

Posons $ F(x)=\displaystyle\sum_{i \geq 1} f_{i}^{\mathcal{B}^{\infty}} x^{i} $ et, pour $ k \geq 1 $, $ F_{k}(x) = \displaystyle\sum_{i \geq 1} f_{k,i}^{\mathcal{B}^{\infty}} x^{i} $. Comme $ f_{k,i}^{\mathcal{B}^{\infty}} = 0 $ si $ i < k $, $ F_{k}(x) = \displaystyle\sum_{i \geq k} f_{k,i}^{\mathcal{B}^{\infty}} x^{i} $. Alors,

\begin{eqnarray*}
F(x) & = & \sum_{k \geq 1} F_{k}(x) \\
F_{1}(x) & = & 2xF(x)+x \\
F_{k}(x) & = & x \left( \sum_{i \geq k-1} F_{i}(x) \right) = x \bigg( F(x) -F_{1}(x)- \hdots - F_{k-2}(x) \bigg)   ~ {\rm si} ~  k \geq 2.
\end{eqnarray*}

Par différence, on obtient pour tout $ l \geq 1 $:
$$ F_{l+2}-F_{l+1}+xF_{l}=0 .$$
Il existe donc $ A(x),B(x) \in \mathbb{C} [[x,x^{-1}]] $ tels que $ \forall l \geq 1 $,
$$ F_{l}(x)=A(x) \left( \dfrac{1-\sqrt{1-4x}}{2} \right) ^{l} + B(x)  \left( \dfrac{1+\sqrt{1-4x}}{2} \right) ^{l} ,$$
d'où
\begin{equation}\label{S1}\begin{array}{rcl}
F(x) & =& A(x) \dfrac{1-2x-\sqrt{1-4x}}{2} + B(x) \dfrac{1-2x+\sqrt{1-4x}}{2}.
\end{array} \end{equation}

En faisant $ l=1,2 $, nous avons les deux relations suivantes:
\begin{equation}\label{S2}\left\{\begin{array}{rcl}
A(x) & = & \dfrac{1+\sqrt{1-4x}}{2} F(x) + \dfrac{1}{2}+\dfrac{1-2x}{2 \sqrt{1-4x}} \\
B(x) & = & \dfrac{1-\sqrt{1-4x}}{2} F(x) + \dfrac{1}{2}+\dfrac{1-2x}{2 \sqrt{1-4x}}
\end{array}\right. \end{equation}

Montrons que $ B(x) = 0 $. Si $ B(x) \neq 0 $, $ B(x) = a_{k} x^{k}+ \hdots $, $ a_{k} \neq 0 $. De plus,
$$ F_{l}(x) = A(x) \left( \underbrace{\dfrac{1-\sqrt{1-4x}}{2}}_{=x+ \hdots} \right) ^{l} + B(x)  \left( \underbrace{\dfrac{1+\sqrt{1-4x}}{2}}_{=1+ \hdots} \right) ^{l} ,$$
donc, si $ l>k $,
\begin{eqnarray*}
A(x) \left( \dfrac{1-\sqrt{1-4x}}{2} \right) ^{l} & = & \mathcal{O}(x^{l}) \\
B(x)  \left( \dfrac{1+\sqrt{1-4x}}{2} \right) ^{l} & = & a_{k} x^{k} + \hdots
\end{eqnarray*}
D'où $ F_{l}(x) = a_{k} x^{k} + \hdots $. Or $ F_{l}(x) = \displaystyle\sum_{i \geq l} f_{l,i}^{\mathcal{B}^{\infty}} x^{i} $, donc $ F_{l}(x)=\mathcal{O}(x^{l}) $, et $ a_{k}=0 $.\\

Ainsi $ B(x)=0 $, et avec (\ref{S1}) et (\ref{S2}),
\begin{eqnarray*}
F(x) & = & \dfrac{1-4x-\sqrt{1-4x}}{2(4x-1)} = \dfrac{1}{2 \sqrt{1-4x}} - \dfrac{1}{2} \\
A(x) & = & \dfrac{1}{2}+\dfrac{1}{2\sqrt{1-4x}}
\end{eqnarray*}
Au passage, on obtient la formule pour les $ F_{k} $:
$$ F_{k}(x) = \dfrac{x}{\sqrt{1-4x}} \left( \dfrac{1-\sqrt{1-4x}}{2} \right)^{k-1} .$$
Finalement la série formelle de $ \mathcal{B}^{\infty} $ est donnée par:
$$ F_{\mathcal{B}^{\infty}}(x)=\dfrac{1}{2 \sqrt{1-4x}} + \dfrac{1}{2} .$$
\end{proof}
\\

Ainsi, pour tout $ k \geq 1 $, $ f_{1,k}^{\mathcal{B}^{\infty}} = \dfrac{(2k-2)!}{(k-1)! (k-1)!} $ et $ f_{k}^{\mathcal{B}^{\infty}} = \dfrac{(2k)!}{2(k!)^{2}} $. Voici quelques valeurs numériques:

$$\begin{array}{c|c|c|c|c|c|c|c|c}
k&1&2&3&4&5&6&7&8\\
\hline f_{1,k}^{\mathcal{B}^{\infty}}&1&2&6&20&70&252&924&3432\\
\hline f_{k}^{\mathcal{B}^{\infty}}&1&3&10&35&126&462&1716&6435
\end{array}$$

Ce sont les séquences A000984 et A001700 de \cite{Sloane}.

\subsection{La sous-algèbre de Hopf $ \mathcal{B}^{1} $}

Muni de la concaténation, l'ensemble $ \mathcal{G}^{0} \cup \{ 1 \} $, constitué de l'arbre vide et de tous les arbres construits uniquement avec des $ B^{+} $, est un monoïde. Notons $ \mathcal{B}^{0} $ l'algèbre unitaire engendrée par ce monoïde. Si $ T $ est un arbre appartenant à $ \mathcal{G}^{0} $, il existe certaines coupes $ \boldsymbol{v} \models V(T) $ telles que $ Lea_{\boldsymbol{v}} (T) \not\in \mathcal{G}^{0} \cup \left\lbrace 1 \right\rbrace $ (voir la remarque qui suit le lemme \ref{comod}). $ \mathcal{B}^{0} $ n'est donc pas une cogèbre. Par contre, d'après le lemme \ref{comod}, $ \mathcal{B}^{0} $ est un comodule à droite de l'algèbre de Hopf $ \mathcal{B}^{\infty} $.\\

Remarquons que $ \mathcal{B}^{0} $ est isomorphe en tant qu'algèbre à l'algèbre (de Hopf) des arbres enracinés plans $ \mathcal{H}_{PR} $. En effet, on a vu au paragraphe qui précède la proposition \ref{2} que les arbres de $ \mathcal{G}^{0} $ sont en bijection avec ceux de $ \mathcal{H}_{PR} $. Cette bijection s'étend en un isomorphisme d'algèbres graduées de $ \mathcal{H}_{PR} $ dans $ \mathcal{B}^{0} $. En particulier, ces deux algèbres ont la même série formelle :
$$ F_{\mathcal{B}^{0}}(x)=\dfrac{1-\sqrt{1-4x}}{2x}=F_{\mathcal{H}_{PR}}(x) .$$

Considérons maintenant l'ensemble
$$ \mathcal{G}^{1} = \bigcup_{n \geq 1,\varepsilon_{n} \in \left\lbrace +,- \right\rbrace } \mathcal{G}^{(\overbrace{+, \hdots ,+}^{n-1 {\rm ~ fois}} ,\varepsilon_{n})} .$$
et notons $ \mathcal{B}^{1} $ l'algèbre $ \mathbb{K} [\mathcal{G}^{1} \cup \left\lbrace 1 \right\rbrace] $. Alors :

\begin{prop} \label{B1Hopf}
L'algèbre $ \mathcal{B}^{1} $ est une algèbre de Hopf.
\end{prop}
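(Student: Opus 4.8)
The strategy mirrors the proof that $\mathcal{B}^{\infty}$ is a Hopf algebra: since $\mathcal{B}^{1}$ is the polynomial algebra on the trees of $\mathcal{G}^{1}$, it suffices to check that for every tree $T\in\mathcal{G}^{1}$ and every \emph{simple} cut $\boldsymbol{v}\models V(T)$ one has $Lea_{\boldsymbol{v}}(T)\in\mathbb{K}[\mathcal{G}^{1}\cup\{1\}]$ and $Roo_{\boldsymbol{v}}(T)\in\mathcal{G}^{1}\cup\{1\}$; an arbitrary admissible cut then follows because $\Delta$ is multiplicative and $\mathcal{B}^{1}$ is generated as an algebra by $\mathcal{G}^{1}\cup\{1\}$. (Counitality and coassociativity are inherited from $\mathcal{H}_{o}$, and the antipode exists for free once $\mathcal{B}^{1}$ is a graded connected bialgebra.) So the whole content is the stability of $\mathcal{G}^{1}\cup\{1\}$ under simple cuts, branch side and trunk side.

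First I would describe the trees in $\mathcal{G}^{1}$ explicitly. By definition $\mathcal{G}^{1}=\bigcup_{n\geq1}\mathcal{G}^{(+,\dots,+,\varepsilon_{n})}$. If $\varepsilon_{n}=+$, then $\mathcal{G}^{(+,\dots,+)}=\mathcal{G}^{0}$ consists of the trees built purely with $B^{+}$, and these are stable under taking $Roo_{\boldsymbol{v}}$ by Lemma~\ref{comod}; the branch side $Lea_{\boldsymbol{v}}$ is handled by the remark following Lemma~\ref{comod} and will need a separate check (see below). If $\varepsilon_{n}=-$, then a tree $T\in\mathcal{G}^{(+,\dots,+,-)}$ is of the form $T=B^{-}(T_{1},\dots,T_{m})$ where $T_{1}\cdots T_{m}\in\mathcal{G}^{(+,\dots,+)}=\mathcal{G}^{0}$, hence by Lemma~\ref{1} (or directly Lemma~\ref{magma}) each $T_{i}\in\mathcal{G}^{0}$. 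So a tree of $\mathcal{G}^{1}$ is either a pure-$B^{+}$ tree or a $B^{-}$ applied to a forest of pure-$B^{+}$ trees.

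Next I would run the case analysis on a simple cut $\boldsymbol{v}$ of such a tree, by induction on the degree (trivial for $n\le 2$). For $T=B^{-}(T_{1},\dots,T_{m})$ with all $T_{i}\in\mathcal{G}^{0}$: the total cut gives $1$ on the trunk and $T$ on the branch, fine. Otherwise $\boldsymbol{v}\models V(T_{i})$ for a unique $i$; since $T_{i}\in\mathcal{G}^{0}$, Lemma~\ref{comod} gives $Roo_{\boldsymbol{v}}(T_{i})\in\mathcal{G}^{0}\cup\{1\}$, so by Lemma~\ref{magma} the forest $T_{1}\cdots Roo_{\boldsymbol{v}}(T_{i})\cdots T_{m}$ lies in $\mathcal{G}^{0}\cup\{1\}$ and therefore $Roo_{\boldsymbol{v}}(T)=B^{-}(T_{1},\dots,Roo_{\boldsymbol{v}}(T_{i}),\dots,T_{m})\in\mathcal{G}^{1}\cup\{1\}$ (applying a single $B^{-}$ to a pure-$B^{+}$ forest). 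For the branch side, $Lea_{\boldsymbol{v}}(T)=Lea_{\boldsymbol{v}}(T_{i})$ and we need $Lea_{\boldsymbol{v}}(T_{i})\in\mathbb{K}[\mathcal{G}^{1}\cup\{1\}]$; this is exactly the claim one must prove for pure-$B^{+}$ trees. For $T=B^{+}(T_{1},\dots,T_{m})\in\mathcal{G}^{0}$: the cut detaching the top vertex $n$ from the root gives $Roo_{\boldsymbol{v}}(T)=T_{1}\in\mathcal{G}^{0}$ and $Lea_{\boldsymbol{v}}(T)=B^{-}(T_{2},\dots,T_{m})$, which is a single $B^{-}$ of a pure-$B^{+}$ forest, hence in $\mathcal{G}^{1}$; a cut inside some $T_{i}$ leaves $Roo_{\boldsymbol{v}}(T)$ pure-$B^{+}$ by Lemma~\ref{comod} and $Lea_{\boldsymbol{v}}(T)=Lea_{\boldsymbol{v}}(T_{i})$, reducing again to the pure-$B^{+}$ branch claim in lower degree.

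\textbf{The main obstacle.} Everything reduces to: if $T\in\mathcal{G}^{0}$ and $\boldsymbol{v}$ is a simple cut, then $Lea_{\boldsymbol{v}}(T)$ lies in $\mathbb{K}[\mathcal{G}^{1}\cup\{1\}]$ — not in $\mathcal{G}^{0}$ in general, as the remark after Lemma~\ref{comod} warns, but we only need it in $\mathcal{G}^{1}$. The point is that for $T=B^{+}(T_{1},\dots,T_{m})$, a simple cut inside $T_{1}$ produces $Lea_{\boldsymbol{v}}(T_{1})$ which is a forest (possibly of length $>1$, since $T_{1}$'s root is in the trunk), and a simple cut inside $T_{i}$ with $i\ge2$ produces $Lea_{\boldsymbol{v}}(T_{i})$, itself a pure-$B^{+}$ leaf-forest; while cutting the edge from the root of $T$ to vertex $n$ gives precisely $B^{-}(T_{2},\dots,T_{m})$. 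I would argue by induction that any leaf-forest arising this way is a product of one tree from $\mathcal{G}^{1}$ (the one carrying the vertex $n$, of the form $B^{-}(\dots)$ or pure-$B^{+}$) times pure-$B^{+}$ trees, and check that such products lie in $\mathcal{G}^{1}\cup\{1\}$ when of length one and otherwise in $\mathbb{K}[\mathcal{G}^{1}\cup\{1\}]$ — which is all that is required since $\mathcal{B}^{1}$ is a polynomial algebra on $\mathcal{G}^{1}$. Making the bookkeeping of ``which tree in the leaf-forest carries the maximal index and hence may legitimately be a $B^{-}$-tree'' precise is the one delicate point; once it is pinned down, the induction closes and $\mathcal{B}^{1}$ is a graded connected bialgebra, hence a Hopf algebra.
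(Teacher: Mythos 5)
Your proof is correct and follows essentially the same route as the paper: reduce to simple cuts, describe the trees of $\mathcal{G}^{1}$ as pure-$B^{+}$ trees or $B^{-}$ of a pure-$B^{+}$ forest, and run an induction on the degree using Lemmas \ref{1}, \ref{magma} and \ref{comod} for the trunk side. In fact the case analysis of your second paragraph already \emph{is} the complete argument, including for the branch side: the branch claim for a tree of $\mathcal{G}^{0}$ of degree $n$ reduces either to the explicit tree $B^{-}(T_{2},\hdots,T_{m})\in\mathcal{G}^{(+,\hdots,+,-)}\subseteq\mathcal{G}^{1}$ (the cut detaching the grafted vertex), or to the branch claim for some $T_{j}\in\mathcal{G}^{0}$ of degree $<n$; together with the $B^{-}$ case this closes the induction. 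The ``main obstacle'' of your last paragraph is therefore a phantom, and it rests on a misreading: for a \emph{simple} cut $\boldsymbol{v}=\{v\}$, the leaf part $Lea_{\boldsymbol{v}}(T)=\{w \mid w\twoheadrightarrow v\}$ is the full subtree rooted at $v$, hence always a single tree, never a forest of length $>1$ (that only happens for admissible cuts with several vertices, which you have already dispatched by the reduction to simple cuts and multiplicativity of $\Delta$). So there is no ``bookkeeping of which tree carries the maximal index'' left to do; you only need to observe, as the paper implicitly does, that the standardized indexing of $Lea_{\boldsymbol{v}}(T_{j})$ as a subforest of $T$ agrees with its indexing as a subforest of $T_{j}$, so that the induction hypothesis applies verbatim. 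With that remark, delete the last paragraph and your proof coincides with the paper's.
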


\begin{proof}
Il suffit de montrer que, étant donné un arbre appartenant à $ \mathcal{G}^{1} $, la branche et le tronc de cet arbre, après avoir réalisé une coupe simple, appartiennent respectivement à $ \mathcal{G}^{1} $ et $ \mathcal{G}^{1} \cup \left\lbrace 1 \right\rbrace $. On travaille par récurrence sur le degré $ n $ des arbres de $ \mathcal{B}^{1} $. Ceci est évident pour $ n \leq 3 $, en le vérifiant rapidement à la main.\\

Au rang $ n \geq 4 $. Considérons un arbre $ T \in \mathcal{G}^{1} $ de degré $ n $ et $ \boldsymbol{v} \models V(T) $ une coupe simple. Il y a deux cas:
\begin{enumerate}
\item Si $ T $ est de la forme $ B^{-}(T_{1},\hdots,T_{m}) \in \mathcal{G}^{(+, \hdots ,+,-)} $, avec $ T_{1},\hdots,T_{m} \in \mathcal{G}^{0} $. Si $ \boldsymbol{v} $ est la coupe totale, le résultat est trivial. Sinon, comme $ \boldsymbol{v} $ est une coupe simple de $ T $, il existe un unique $ i \in \left\lbrace 1,\hdots,m \right\rbrace $ tel que $ \boldsymbol{v} \models V(T_{i}) $ (on inclut le cas de la coupe totale qui correspond à couper l'arête entre la racine de $ T $ et celle de $ T_{i} $). Comme $ T_{i} \in \mathcal{G}^{0} $, par récurrence, $ Lea_{\boldsymbol{v}}(T) = Lea_{\boldsymbol{v}}(T_{i}) \in \mathcal{G}^{1} $. D'autre part, avec le lemme \ref{comod}, $ T_{i} $ appartenant à $ \mathcal{G}^{0} $, $ Roo_{\boldsymbol{v}}(T_{i}) \in \mathcal{G}^{0} \cup \left\lbrace 1 \right\rbrace $. Donc la forêt $ T_{1} \hdots Roo_{\boldsymbol{v}}(T_{i}) \hdots T_{m} $ appartient à $ \mathcal{G}^{0} \cup \left\lbrace 1 \right\rbrace $, et $ Roo_{\boldsymbol{v}}(T) = B^{-}(T_{1}, \hdots, Roo_{\boldsymbol{v}}(T_{i}) ,\hdots ,T_{m}) \in \mathcal{G}^{1} $. Le résultat est ainsi démontré si $ T $ est de la forme $ B^{-}(T_{1},\hdots,T_{m}) \in \mathcal{G}^{(+, \hdots ,+,-)} $.
\item Si $ T $ est de la forme $ B^{+}(T_{1},\hdots,T_{m}) \in \mathcal{G}^{(+, \hdots ,+)} $, avec $ T_{1},\hdots,T_{m} \in \mathcal{G}^{0} $. Si $ \boldsymbol{v} $ est la coupe simple correspondant à couper l'arête joignant la racine de $ T $ (qui est la racine de $ T_{1} $) et le sommet indexé par $ n $ joignant les racines communes de $ T_{2}, \hdots , T_{m} $, alors $ Roo_{\boldsymbol{v}}(T) = T_{1} \in \mathcal{G}^{0} $ et $ Lea_{\boldsymbol{v}}(T) = B^{-}(T_{2}, \hdots ,T_{m}) \in \mathcal{G}^{1} $. Le résultat est donc démontré dans ce cas. Sinon, comme $ \boldsymbol{v} $ est une coupe simple de $ T $, il existe un unique $ i \in \left\lbrace 1,\hdots,m \right\rbrace $ tel que $ \boldsymbol{v} \models V(T_{i}) $ (on inclut le cas de la coupe totale qui, si $ i \geq 2 $, correspond à couper l'arête entre le sommet de $ T $ indexé par $ n $ et la racine de $ T_{i} $). Il y a alors deux cas:
\begin{enumerate}
\item Si $ i = 1 $, c'est-à-dire si $ \boldsymbol{v} \models V(T_{1}) $. Si $ \boldsymbol{v} $ est la coupe totale, alors $ Lea_{\boldsymbol{v}}(T) = T $ et $ Roo_{\boldsymbol{v}}(T) = 1 $ et le résultat est établi. Sinon, par récurrence, $ Lea_{\boldsymbol{v}}(T) = Lea_{\boldsymbol{v}}(T_{1}) \in \mathcal{G}^{1} $. Avec le lemme \ref{comod}, comme $ Roo_{\boldsymbol{v}}(T_{1}) $ est différent de l'arbre vide, $ Roo_{\boldsymbol{v}}(T_{1}) \in \mathcal{G}^{0} $, donc $ Roo_{\boldsymbol{v}}(T_{1}) T_{2} \hdots T_{m} \in \mathcal{G}^{0} $. Et ainsi $ Roo_{\boldsymbol{v}}(T) = B^{+}(Roo_{\boldsymbol{v}}(T_{1}), T_{2}, \hdots ,T_{m}) \in \mathcal{G}^{0} \subseteq \mathcal{G}^{1} \cup \left\lbrace 1 \right\rbrace $.
\item Si $ i \geq 2 $. Par récurrence, comme $ T_{i} \in \mathcal{G}^{0} $, $ Lea_{\boldsymbol{v}}(T) = Lea_{\boldsymbol{v}}(T_{i}) \in \mathcal{G}^{1} $. D'autre part, avec le lemme \ref{comod}, $ Roo_{\boldsymbol{v}}(T_{i}) \in \mathcal{G}^{0} \cup \left\lbrace 1 \right\rbrace $. Donc la forêt $ T_{1} \hdots Roo_{\boldsymbol{v}}(T_{i}) \hdots T_{m} $ appartient à $ \mathcal{G}^{0} $, et $ Roo_{\boldsymbol{v}}(T) = B^{+}(T_{1}, \hdots, Roo_{\boldsymbol{v}}(T_{i}), \hdots ,T_{m}) \in \mathcal{G}^{0} \subseteq \mathcal{G}^{1} \cup \left\lbrace 1 \right\rbrace $.
\end{enumerate}
\end{enumerate}
Ceci démontre la stabilité de $ \mathcal{B}^{1} $ par coupe simple, et donc par coupe admissible, car $ \mathcal{B}^{1} $ est stable pour le produit. Ainsi $ \mathcal{B}^{1} $ est bien une algèbre de Hopf.
\end{proof}
\\

{\bf Remarque.} {On peut voir en reprenant cette démonstration que l'algèbre $ \mathcal{B}^{0} $ est même un comodule à droite de l'algèbre de Hopf $ \mathcal{B}^{1} $.}

\subsection{Généralisation de l'algèbre de Hopf $ \mathcal{B}^{1} $}

En utilisant le même modèle que pour la construction de l'algèbre de Hopf $ \mathcal{B}^{1} $, nous allons à présent construire une infinité d'algèbres de Hopf $ \mathcal{B}^{i} $, pour $ i \geq 2 $, telles qu'on ait les relations d'inclusions suivantes:
$$ \mathcal{B}^{1} \subseteq \mathcal{B}^{2} \subseteq \hdots \subseteq \mathcal{B}^{i} \subseteq \mathcal{B}^{i+1} \subseteq \hdots \subseteq \mathcal{B}^{\infty} .$$

Pour cela, posons, quelque soit $ i \geq 2 $,
$$ \mathcal{G}^{i} = \bigcup_{n \geq 1,(\varepsilon_{n-i+1}, \hdots , \varepsilon_{n}) \in \left\lbrace +,- \right\rbrace ^{i} } \mathcal{G}^{(\overbrace{+, \hdots ,+}^{n-i {\rm ~ fois}} ,\varepsilon_{n-i+1}, \hdots ,\varepsilon_{n})} .$$

et $ \mathcal{B}^{i} = \mathbb{K}[\mathcal{G}^{i} \cup \{ 1 \}] $. Remarquons que $ \mathcal{G}^{i} \subseteq \mathcal{G}^{i+1} $, $ \forall i \geq 1 $. Comme annoncé, on a alors le résultat suivant:

\begin{prop} 
Pour tout $ i \geq 1 $, l'algèbre $ \mathcal{B}^{i} $ est une algèbre de Hopf.
\end{prop}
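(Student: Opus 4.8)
The proof will follow the same pattern as the proofs of Proposition \ref{B1Hopf} (the case $ i=1 $): it suffices to show that, given an arbre $ T \in \mathcal{G}^{i} $ and a coupe simple $ \boldsymbol{v} \models V(T) $, the branche $ Lea_{\boldsymbol{v}}(T) $ belongs to $ \mathcal{G}^{i} $ and the tronc $ Roo_{\boldsymbol{v}}(T) $ belongs to $ \mathcal{G}^{i} \cup \left\lbrace 1 \right\rbrace $; stability under coupe admissible then follows since $ \mathcal{B}^{i} $ is an algebra generated by $ \mathcal{G}^{i} \cup \left\lbrace 1 \right\rbrace $. We argue by induction on the degré $ n $ of $ T $, the cases $ n \leq i+2 $ being checked directly (for these, $ \mathcal{G}^{i} $ coincides with $ \mathcal{G}^{\infty} $ in the relevant degrees or is handled by hand). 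One should also note $ \mathcal{G}^{i} \subseteq \mathcal{G}^{i+1} $, already observed in the excerpt, so that the chain of inclusions holds and $ \mathcal{B}^{1} $, treated separately, anchors the induction on $ i $ — though in fact the argument below works uniformly for all $ i \geq 1 $.

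\textbf{Main step.} Fix $ T \in \mathcal{G}^{i} $ of degré $ n \geq i+3 $, say $ T \in \mathcal{G}^{(+,\hdots,+,\varepsilon_{n-i+1},\hdots,\varepsilon_{n})} $ with $ n-i $ signs $ + $ followed by an arbitrary word of length $ i $. By construction $ T = B^{\varepsilon_{n}}(T_{1},\hdots,T_{m}) $ with $ T_{1}\hdots T_{m} \in \mathcal{G}^{(+,\hdots,+,\varepsilon_{n-i+1},\hdots,\varepsilon_{n-1})} $, a word that, read from its $ (|T_1|+\hdots) $-th letter onward, still ends in at most $ i-1 $ possibly-minus signs preceded by pluses. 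By Lemme \ref{1}, $ T_{1} \in \mathcal{G} $ and $ T_{2},\hdots,T_{m} \in \mathcal{G}^{0} $; moreover, tracking the sign word carefully (using Lemme \ref{prelim} and the remark that $ F_1 F_2 \in \mathcal{G}^{(\underline{\varepsilon})} $ forces $ F_1 \in \mathcal{G}^{(\varepsilon_1,\hdots,\varepsilon_{|F_1|})} $), one sees that $ T_{1} \in \mathcal{G}^{i} $ — indeed the number of non-$ + $ signs governing $ T_{1} $'s construction is at most $ i $. Now split on whether $ \varepsilon_{n} = - $ or $ \varepsilon_{n} = + $, and within each, on which $ T_{k} $ the coupe $ \boldsymbol{v} $ touches (including the special coupe separating $ T_1 $ from the new sommet in the $ B^{+} $ case, where $ Lea_{\boldsymbol{v}}(T) = B^{-}(T_2,\hdots,T_m) \in \mathcal{G}^{i} $). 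In each sub-case: if $ \boldsymbol{v} \models V(T_1) $, apply the degree-induction to $ T_1 \in \mathcal{G}^{i} $ to get $ Lea_{\boldsymbol{v}}(T_1) \in \mathcal{G}^{i} $ and $ Roo_{\boldsymbol{v}}(T_1) \in \mathcal{G}^{i} \cup \{1\} $, then reassemble with $ T_2,\hdots,T_m \in \mathcal{G}^0 $ via Lemme \ref{1}; if $ \boldsymbol{v} \models V(T_k) $ with $ k \geq 2 $, then $ T_k \in \mathcal{G}^0 $, so by Lemme \ref{comod} $ Roo_{\boldsymbol{v}}(T_k) \in \mathcal{G}^0 \cup \{1\} $ and the degree-induction (or directly the $ \mathcal{G}^0 $-case) gives $ Lea_{\boldsymbol{v}}(T_k) \in \mathcal{G}^1 \subseteq \mathcal{G}^i $; reassemble with Lemme \ref{magma}. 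In every case $ Roo_{\boldsymbol{v}}(T) = B^{\varepsilon_n}(T_1,\hdots,Roo_{\boldsymbol{v}}(T_k),\hdots,T_m) \in \mathcal{G}^i \cup \{1\} $ and $ Lea_{\boldsymbol{v}}(T) = Lea_{\boldsymbol{v}}(T_k) \in \mathcal{G}^i $.

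\textbf{Expected obstacle.} The only genuinely delicate point is the bookkeeping of the sign word: one must be sure that when $ T $ is cut, the piece $ Roo_{\boldsymbol{v}}(T) $ is built from a sign word of the form (some pluses)(a word of length $ \leq i $), i.e. still lies in $ \mathcal{G}^{i} $ rather than merely in $ \mathcal{G}^{\infty} $. This is where replacing $ T_k $ by $ Roo_{\boldsymbol{v}}(T_k) $ must not lengthen the "tail" of non-plus signs — and it does not, because for $ k\geq 2 $ Lemme \ref{comod} keeps us inside $ \mathcal{G}^{0} $ (tail length $ 0 $), while for $ k=1 $ the degree-induction hypothesis is precisely that the tail of $ Roo_{\boldsymbol{v}}(T_1) $ has length $ \leq i $. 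The rest is a routine case analysis, structurally identical to the proof of Proposition \ref{B1Hopf} with the single sign $ \varepsilon_n $ replaced by a bounded-length suffix.
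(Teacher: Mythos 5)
Il y a une lacune réelle, précisément à l'endroit que vous signalez vous-même comme « the only genuinely delicate point ». Écrivons $ T = B^{\varepsilon_{n}}(T_{1}, \hdots, T_{m}) $ avec $ T_{1} \hdots T_{m} \in \mathcal{G}^{i-1} $ et posons $ k = \left| T_{2} \right| + \hdots + \left| T_{m} \right| $. Comme $ T_{2}, \hdots, T_{m} \in \mathcal{G}^{0} $, le mot de signes de la forêt $ T_{1} \hdots T_{m} $ est celui de $ T_{1} $ suivi de $ k $ signes $ + $; son appartenance à $ \mathcal{G}^{i-1} $ force donc la borne fine $ T_{1} \in \mathcal{G}^{i-1-k} $, et non seulement $ T_{1} \in \mathcal{G}^{i} $ comme vous l'affirmez. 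Cette précision n'est pas cosmétique : lorsque la coupe touche $ T_{1} $, la forêt recomposée $ Roo_{\boldsymbol{v}}(T_{1})\, T_{2} \hdots T_{m} $ a pour « queue » de signes non-$+$ celle de $ Roo_{\boldsymbol{v}}(T_{1}) $ \emph{augmentée de} $ k $ (les $ k $ signes $ + $ de $ T_{2} \hdots T_{m} $ sont ajoutés \emph{après}), puis le $ B^{\varepsilon_{n}} $ final ajoute encore $ 1 $. Avec votre hypothèse de récurrence (« the tail of $ Roo_{\boldsymbol{v}}(T_{1}) $ has length $ \leq i $ »), vous n'obtenez que $ Roo_{\boldsymbol{v}}(T) \in \mathcal{G}^{i+k+1} $, ce qui ne donne pas $ \mathcal{G}^{i} $. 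Il faut savoir que $ Roo_{\boldsymbol{v}}(T_{1}) \in \mathcal{G}^{i-1-k} \cup \{1\} $, c'est-à-dire appliquer l'énoncé de la proposition \emph{à l'indice strictement plus petit} $ i-1-k $, et non à l'indice $ i $.

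C'est exactement pour cela que la preuve du texte mène une récurrence sur $ i $ (le cas $ i=1 $ étant la proposition \ref{B1Hopf}, et $ i-1-k \leq 0 $ étant couvert par le lemme \ref{comod}), avec la récurrence sur le degré imbriquée à l'intérieur; une formulation équivalente serait de démontrer simultanément, par récurrence sur le degré, l'énoncé pour tous les indices $ j \geq 0 $ à la fois. Votre rédaction, qui fixe $ i $, annonce que « the argument works uniformly for all $ i $ » et n'invoque jamais l'hypothèse qu'aux indices $ 0 $, $ 1 $ et $ i $, ne se referme donc pas. Le reste de votre analyse de cas (coupe dans $ T_{j} $ pour $ j \geq 2 $ via le lemme \ref{comod}, coupe spéciale séparant $ T_{1} $ du sommet $ n $, traitement des branches $ Lea_{\boldsymbol{v}} $) est correct et identique à celui du texte; seul ce point de comptabilité de l'indice, que vous aviez pourtant isolé, est résolu avec une borne trop faible.
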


\begin{proof}
Travaillons par récurrence sur $ i \geq 1 $. Le cas $ i=1 $ ayant déjà été démontré, supposons $ i \geq 2 $.\\

Il faut montrer que $ \mathcal{B}^{i} $ est stable par coupe simple. Pour cela, faisons une récurrence sur le degré $ n $ des arbres de $ \mathcal{G}^{i} $. Ceci est clair pour $ n \leq 3 $ et découle directement du fait que $ \mathcal{B}^{\infty} $ est une algèbre de Hopf.\\

Au rang $ n \geq 4 $. Considérons un arbre $ T \in \mathcal{G}^{i} $ de degré $ n $ et $ \boldsymbol{v} \models V(T) $ une coupe simple. Ici encore, il faut distinguer deux cas:
\begin{enumerate}
\item Si $ T $ est de la forme $ B^{-}(T_{1},\hdots,T_{m}) \in \mathcal{G}^{( \hdots ,-)} $. D'après le lemme \ref{1}, $ T_{1} \in \mathcal{G} $ et $ T_{2},\hdots,T_{m} \in \mathcal{G}^{0} $. Comme $ T \in \mathcal{G}^{i} $, la forêt $ T_{1} \hdots T_{m} $ appartient à $ \mathcal{G}^{i-1} $. En posant $ k = \left| T_{2}\right| + \hdots + \left| T_{m} \right| $, $ T_{1} \in \mathcal{G}^{i-1-k} $. Supposons $ \boldsymbol{v} $ différent de la coupe totale, le résultat étant trivial dans ce cas. Comme $ \boldsymbol{v} $ est une coupe simple de $ T $, il existe un unique $ j \in \left\lbrace 1,\hdots,m \right\rbrace $ tel que $ \boldsymbol{v} \models V(T_{j}) $ (on inclut le cas de la coupe totale qui correspond à couper l'arête entre la racine de $ T $ et celle de $ T_{j} $). Comme $ T_{j} \in \mathcal{G}^{i} $, par récurrence, $ Lea_{\boldsymbol{v}}(T) = Lea_{\boldsymbol{v}}(T_{j}) \in \mathcal{G}^{i} $. Pour $ Roo_{\boldsymbol{v}}(T) $, on doit alors différencier deux sous-cas:
\begin{enumerate}
\item Si $ j=1 $. $ T_{1} $ appartient à $ \mathcal{G}^{i-1-k} $. Par récurrence, $ Roo_{\boldsymbol{v}}(T_{1}) $ est un arbre appartenant à $ \mathcal{G}^{i-1-k} \cup \{ 1 \} $. D'après le lemme \ref{1}, la forêt $ Roo_{\boldsymbol{v}}(T_{1}) T_{2} \hdots T_{m} $ appartient à $ \mathcal{G}^{i-1} \cup \{ 1 \} $. Donc $ Roo_{\boldsymbol{v}}(T) = B^{-}(Roo_{\boldsymbol{v}}(T_{1}), T_{2}, \hdots ,T_{m}) \in \mathcal{G}^{i} $.
\item Si $ j \geq 2 $. Par le lemme \ref{comod}, $ Roo_{\boldsymbol{v}}(T_{j}) \in \mathcal{G}^{0} \cup \left\lbrace 1 \right\rbrace $. Notons $ l $ la somme $ \left| T_{2}\right| + \hdots + \left| Roo_{\boldsymbol{v}}(T_{j}) \right| + \hdots + \left| T_{m} \right| $. Alors $ l \leq k $ et la forêt $ T_{1} \hdots Roo_{\boldsymbol{v}}(T_{j}) \hdots T_{m} $ appartient à $ \mathcal{G}^{i-1+l-k} \subseteq \mathcal{G}^{i-1} $. Et ainsi $ Roo_{\boldsymbol{v}}(T) = B^{-}(T_{1}, \hdots ,Roo_{\boldsymbol{v}}(T_{j}), \hdots ,T_{m}) \in \mathcal{G}^{i} $.
\end{enumerate}
Ceci termine la démonstration dans le premier cas.
\item Si $ T $ est de la forme $ B^{+}(T_{1},\hdots,T_{m}) \in \mathcal{G}^{( \hdots ,+)} $. Avec le même raisonnement que dans le premier cas, en notant $ k = \left| T_{2}\right| + \hdots + \left| T_{m} \right| $, $ T_{1} \in \mathcal{G}^{i-1-k} $ et $ T_{2},\hdots,T_{m} \in \mathcal{G}^{0} $. Si $ \boldsymbol{v} $ est la coupe simple correspondant à couper l'arête joignant la racine de $ T $ (qui est la racine de $ T_{1} $) et le sommet indexé par $ n $ joignant les racines communes de $ T_{2}, \hdots , T_{m} $, alors $ Roo_{\boldsymbol{v}}(T) = T_{1} \in \mathcal{G}^{i-1-k} \subseteq \mathcal{G}^{i} $ et $ Lea_{\boldsymbol{v}}(T) = B^{-}(T_{2}, \hdots ,T_{m}) \in \mathcal{G}^{1} \subseteq \mathcal{G}^{i} $. Le résultat est donc démontré dans ce cas. Sinon, comme $ \boldsymbol{v} $ est une coupe simple de $ T $, il existe un unique $ j \in \left\lbrace 1,\hdots,m \right\rbrace $ tel que $ \boldsymbol{v} \models V(T_{j}) $ (on inclut le cas de la coupe totale qui, si $ j \geq 2 $, correspond à couper l'arête entre le sommet de $ T $ indexé par $ n $ et la racine de $ T_{j} $). Il y a alors deux sous-cas:
\begin{enumerate}
\item Si $ j=1 $. Dans le cas où $ \boldsymbol{v} $ est la coupe totale, $ Lea_{\boldsymbol{v}}(T) = T $ et $ Roo_{\boldsymbol{v}}(T) = 1 $ et le résultat est trivial. Sinon, par récurrence, comme $ T_{1} $ appartient à $ \mathcal{G}^{i-1-k} $, $ Lea_{\boldsymbol{v}}(T) = Lea_{\boldsymbol{v}}(T_{1}) \in \mathcal{G}^{i-1-k} \subseteq \mathcal{G}^{i} $ et $ Roo_{\boldsymbol{v}}(T_{1}) $ est un arbre appartenant à $ \mathcal{G}^{i-1-k} $. Avec le lemme \ref{1}, $ Roo_{\boldsymbol{v}}(T_{1}) T_{2} \hdots T_{m} \in \mathcal{G}^{i-1} $. Donc $ Roo_{\boldsymbol{v}}(T) = B^{+}(Roo_{\boldsymbol{v}}(T_{1}), T_{2}, \hdots ,T_{m}) \in \mathcal{G}^{i} $.
\item Si $ j \geq 2 $. Comme $ T_{j} \in \mathcal{G}^{0} $, par la proposition \ref{B1Hopf}, $ Lea_{\boldsymbol{v}}(T) = Lea_{\boldsymbol{v}}(T_{j}) \in \mathcal{G}^{1} \subseteq \mathcal{G}^{i} $. Par le lemme \ref{comod}, $ Roo_{\boldsymbol{v}}(T_{j}) \in \mathcal{G}^{0} \cup \left\lbrace 1 \right\rbrace $. Si on note $ l $ la somme $ \left| T_{2}\right| + \hdots + \left| Roo_{\boldsymbol{v}}(T_{j}) \right| + \hdots + \left| T_{m} \right| $, alors $ l \leq k $ et la forêt $ T_{1} \hdots Roo_{\boldsymbol{v}}(T_{j}) \hdots T_{m} $ appartient à $ \mathcal{G}^{i-1+l-k} \subseteq \mathcal{G}^{i-1} $. Ainsi $ Roo_{\boldsymbol{v}}(T) = B^{-}(T_{1}, \hdots ,Roo_{\boldsymbol{v}}(T_{j}), \hdots ,T_{m}) \in \mathcal{G}^{i} $.
\end{enumerate}
Le deuxième cas est donc démontré.
\end{enumerate}
Nous avons ainsi obtenu, par le principe de récurrence, le résultat annoncé.
\end{proof}
\\

{\bf Remarque.} {En reprenant la démonstration, on a, pour tout $ i \geq 2 $, $ \tdelta( \mathcal{B}^{i} ) \subseteq \mathcal{B}^{i} \otimes \mathcal{B}^{i-1} $.}
\\

Il est alors possible de calculer le nombre d'arbres de degré $ k $ de $ \mathcal{B}^{i} $:

\begin{prop}
Soit $ k \geq 1 $. Notons, pour tout $ i \geq 1 $, $ f_{1,k}^{\mathcal{B}^{i}} $ le nombre d'arbres de $ \mathcal{B}^{i} $ de degré $ k $. Rappelons que $ f_{1,k}^{\mathcal{B}^{\infty}} $ désigne le nombre d'arbres de $ \mathcal{B}^{\infty} $ de degré $ k $, et $ f_{k}^{\mathcal{B}^{0}} $ le nombre de forêts de $ \mathcal{B}^{0} $ de degré $ k $. Alors, pour tout $ k \geq 1 $, $ i \geq 1 $,
$$ f_{1,k}^{\mathcal{B}^{i}} = \left\{ \begin{array}{rcl}
& f_{1,k}^{\mathcal{B}^{\infty}} & {\rm ~ si ~ } k \leq i+1,\\
& f_{1,k}^{\mathcal{B}^{\infty}} - \displaystyle\sum_{1 \leq j \leq k-i-1} f_{j}^{\mathcal{B}^{0}} f_{1,k-j}^{\mathcal{B}^{\infty}} & {\rm ~ si ~ } k \geq i+2.
\end{array} \right. $$
\end{prop}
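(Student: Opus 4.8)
The plan is to prove the formula by carefully counting the trees of degree $k$ in $\mathcal{B}^{i}$, i.e. the elements of $\mathcal{G}^{i}$ of degree $k$, and comparing with the trees of $\mathcal{B}^{\infty}$ (the elements of $\mathcal{G}$ of degree $k$). Recall from the definition that
$$ \mathcal{G}^{i} = \bigcup_{n \geq 1,(\varepsilon_{n-i+1}, \hdots , \varepsilon_{n}) \in \left\lbrace +,- \right\rbrace ^{i} } \mathcal{G}^{(\overbrace{+, \hdots ,+}^{n-i {\rm ~ fois}} ,\varepsilon_{n-i+1}, \hdots ,\varepsilon_{n})}, $$
so that a tree $T \in \mathcal{G}$ of degree $k$ belongs to $\mathcal{G}^{i}$ exactly when the unique sign word $\underline{\varepsilon} \in \{+,-\}^{k}$ with $T \in \mathcal{G}^{(\underline{\varepsilon})}$ (unique by Lemma \ref{prelim}) has the form $(+,\hdots,+,\varepsilon_{k-i+1},\hdots,\varepsilon_{k})$, i.e. $\varepsilon_{1} = \hdots = \varepsilon_{k-i} = +$. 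Equivalently, writing $j$ for the largest index with $\varepsilon_{j} = -$ (and $j=0$ if there is none), $T \in \mathcal{G}^{i}$ iff $j \leq i$. So $f_{1,k}^{\mathcal{B}^{\infty}} - f_{1,k}^{\mathcal{B}^{i}}$ counts the trees of degree $k$ in $\mathcal{G}$ whose associated sign word has its last minus sign in position $j$ with $i+1 \leq j \leq k$.

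**The key structural lemma.**
The main step is to identify, for each $j$ with $1 \leq j \leq k$, the set of trees $T \in \mathcal{G}$ of degree $k$ whose last minus sign is in position $j$. Using the recursive construction of $\mathcal{G}$: if $\underline{\varepsilon}$ ends in $(+,\hdots,+)$ from position $j+1$ on, then an element of $\mathcal{G}^{(\underline{\varepsilon})}$ is obtained from an element of $\mathcal{G}^{(\varepsilon_{1},\hdots,\varepsilon_{j})}$ by a sequence of $k-j$ operations of type $\varepsilon = +$. Arguing as in the proof of Lemma \ref{1}, a tree $T$ with last minus sign in position $j$ is of the form
$$ T = \overbrace{B^{+}(\hdots B^{+}}^{\ell ~{\rm fois}}(B^{-}(G_{1},\hdots,G_{k'}),\hdots),\hdots) $$
where $B^{-}(G_{1},\hdots,G_{k'}) \in \mathcal{G}^{(\varepsilon_{1},\hdots,\varepsilon_{j})}$ is an arbre of degree $j$ (with the convention $B^{-}(1) = \tdun{1}$ when $j = 1$), and all the trees grafted in the subsequent $B^{+}$ steps lie in $\mathcal{G}^{0}$. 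I would make this precise by showing the following bijection: the set of trees of $\mathcal{G}$ of degree $k$ whose last minus is in position $j$ is in bijection with the set of pairs $(S, H)$ where $S$ is an arbre of $\mathcal{G}$ of degree $j$ whose sign word ends in $-$ (equivalently $S \in \mathcal{G}^{(\varepsilon_{1},\hdots,\varepsilon_{j-1},-)}$ for some prefix), and $H$ is a forest of $\mathcal{G}^{0}$ of degree $k-j$; the bijection sends $(S,H)$ to the tree obtained by successively applying $B^{+}$ to graft, from the root of $S$ outward, the trees composing $H$. (When $j=1$, $S = \tdun{1}$ is the only choice.) Counting: the number of arbres of $\mathcal{G}$ of degree $j$ with sign word ending in $-$ equals $f_{1,j}^{\mathcal{B}^{\infty}} - f_{1,j}^{\mathcal{B}^{j-1}}$ when $j \geq 2$ — but it is cleaner to observe directly that by the $B^{-}$ construction this number equals $f_{j-1}^{\mathcal{B}^{\infty}}$ for $j \geq 2$ (the trees $B^{-}(T_1,\hdots,T_m)$ with $T_1\hdots T_m$ ranging over all forests of $\mathcal{G}$ of degree $j-1$), and equals $1$ for $j = 1$.

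**Assembling the count.**
With the bijection in hand, for $j \geq 2$ the number of degree-$k$ trees of $\mathcal{G}$ with last minus in position $j$ is $f_{j-1}^{\mathcal{B}^{\infty}} \cdot f_{k-j}^{\mathcal{B}^{0}}$ (where $f_{0}^{\mathcal{B}^{0}} = 1$ accounts for $H = 1$), and for $j = 1$ it is $1 \cdot f_{k-1}^{\mathcal{B}^{0}} = f_{k-1}^{\mathcal{B}^{0}}$. Now if $k \leq i+1$, every tree of $\mathcal{G}$ of degree $k$ has last minus in some position $j \leq k \leq i+1$, hence $j \leq i$ except possibly... wait, one must check: actually $j$ ranges over $0,\hdots,k$, and $T \in \mathcal{G}^i$ iff $j \leq i$; when $k \leq i+1$ the only excluded value is $j = k$, giving $j \leq i = k-1$, so actually one must handle $j=k$ too — here the bijection degenerates ($H = 1$, $\ell = 0$) and $T = B^{-}$ of a degree-$(k-1)$ forest, which is never in $\mathcal{G}^{i}$ when $k = i+1$... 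Let me instead just sum the excluded terms: $f_{1,k}^{\mathcal{B}^{\infty}} - f_{1,k}^{\mathcal{B}^{i}} = \sum_{j=i+1}^{k} (\text{trees with last minus in position } j)$. When $k \leq i+1$ this sum is empty unless $k = i+1$, but even for $k=i+1$ the sum is over $j = i+1 = k$ only, and one checks the position-$k$ count must be absorbed — the safest route is to verify $f_{1,k}^{\mathcal{B}^{\infty}} - f_{1,k}^{\mathcal{B}^{i}} = 0$ for $k \leq i+1$ directly from $\mathcal{G}^{(\underline\varepsilon)} \subseteq \mathcal{G}^{i}$ whenever $\underline\varepsilon \in \{+,-\}^k$ with $k \leq i+1$, which is immediate since then the prefix of forced $+$'s has length $k - i \leq 1 \le$ ... indeed any word of length $k \le i+1$ has $k-i \le 1$, hmm this needs $k - i \le 0$; the claim $\mathcal{G}^{(\underline\varepsilon)}\subseteq\mathcal{G}^i$ for all $\underline\varepsilon\in\{+,-\}^k$ holds precisely when $k \le i+1$ because then we may write $k = (k-i-1)_+ \cdot(\text{zero forced }+\text{'s, if }k\le i) $, more simply: $\mathcal{G}^i \supseteq \bigcup_{k' \le i+1}\bigcup_{\underline\varepsilon\in\{+,-\}^{k'}}\mathcal{G}^{(\underline\varepsilon)}$ by taking $n = k'$ in the definition and noting $n - i \le 1$... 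For $k \geq i+2$: summing over $j$ from $i+1$ to $k$ and re-indexing by $\ell = k - j$ (so $\ell$ runs from $0$ to $k-i-1$), the contribution from $j \geq 2$ is $\sum_{\ell=0}^{k-i-1} f_{k-\ell-1}^{\mathcal{B}^{\infty}} f_{\ell}^{\mathcal{B}^{0}}$, i.e. $\sum_{1 \le j \le k-i-1} f_{j}^{\mathcal{B}^{0}} f_{1,k-j}^{\mathcal{B}^{\infty}} + f_{k-1}^{\mathcal{B}^\infty}f_0^{\mathcal{B}^0}$... the last term $f_{k-1}^{\mathcal B^\infty}$ together with the $j=1$ term $f_{k-1}^{\mathcal B^0}$ needs reconciling with the claimed formula, whose correction term is exactly $\sum_{1\le j\le k-i-1} f_j^{\mathcal B^0} f_{1,k-j}^{\mathcal B^\infty}$. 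The main obstacle — and the point I would spend the most care on — is getting these index ranges and the $j=1$ versus $j\ge 2$ boundary cases exactly right so the telescoping sum over last-minus-positions collapses to the stated formula; this is bookkeeping but error-prone, and the cleanest presentation is probably to prove the generating-function identity $F_{1}^{\mathcal{B}^{i}}(x) = F_{1}^{\mathcal{B}^{\infty}}(x) - x^{i+1}\big(F_{1}^{\mathcal B^\infty}(x) - x\big)\cdot\big(F_{\mathcal{B}^{0}}(x)/x\big)\cdot(\text{appropriate shift})$ and read off coefficients, rather than manipulating the sums termwise.
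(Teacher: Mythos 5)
There is a genuine error at the very first step of your setup, and it propagates through the whole argument. A tree $T\in\mathcal{G}$ of degree $k$ with sign word $\underline{\varepsilon}\in\{+,-\}^{k}$ belongs to $\mathcal{G}^{i}$ si et seulement si $\varepsilon_{1}=\hdots=\varepsilon_{k-i}=+$, c'est-\`a-dire si tous les signes $-$ occupent les $i$ \emph{derni\`eres} positions; la statistique pertinente est donc la position du \emph{premier} signe $-$, qui doit \^etre $>k-i$. Vous traduisez cela par \og le plus grand indice $j$ avec $\varepsilon_{j}=-$ v\'erifie $j\leq i$\fg, ce qui est la condition (essentiellement oppos\'ee) que tous les signes $-$ occupent les $i$ \emph{premi\`eres} positions. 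Concr\`etement, pour $k=3$ et $i=1$: les deux arbres de $\mathcal{G}^{(+,+,-)}$ ont leur dernier $-$ en position $3>i$, donc votre crit\`ere les exclut de $\mathcal{B}^{1}$, alors que $\mathcal{G}^{(+,+,-)}\subseteq\mathcal{G}^{1}$ par d\'efinition. Votre somme d'exclusion vaudrait alors $f_{2}^{\mathcal{B}^{\infty}}f_{0}^{\mathcal{B}^{0}}+f_{1}^{\mathcal{B}^{\infty}}f_{1}^{\mathcal{B}^{0}}=4$ au lieu du terme correctif exact $f_{1}^{\mathcal{B}^{0}}f_{1,2}^{\mathcal{B}^{\infty}}=2$, et donnerait $f_{1,3}^{\mathcal{B}^{1}}=2$ au lieu de $4$.

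\`A cause de cela, la d\'ecomposition que vous d\'eveloppez --- par la position du dernier $-$, avec le d\'ecompte (en lui-m\^eme essentiellement correct) $f_{j-1}^{\mathcal{B}^{\infty}}f_{k-j}^{\mathcal{B}^{0}}$ pour les arbres dont le dernier $-$ est en position $j\geq 2$ --- n'est pas l'outil adapt\'e \`a la filtration par les $\mathcal{B}^{i}$. La preuve du texte partitionne au contraire le compl\'ementaire de $\mathcal{G}^{i}$ suivant la position du \emph{premier} signe $-$: en \'ecrivant le mot sous la forme $(+,\underline{\varepsilon})$ avec $\underline{\varepsilon}\in\{+,-\}^{k}$ (ce qui \'elimine l'ambigu\"it\'e $\mathcal{G}^{(+,\cdot)}=\mathcal{G}^{(-,\cdot)}$ qui vous g\^ene au bord $j=1$), les arbres dont le premier $-$ est en position $j+1$ de $\underline{\varepsilon}$ sont au nombre de $f_{j+1}^{\mathcal{B}^{0}}f_{1,k-j}^{\mathcal{B}^{\infty}}$ --- une for\^et pr\'efixe de $\mathcal{G}^{0}$ de degr\'e $j+1$ construite par la suite initiale de $+$, multipli\'ee par le nombre d'arbres de degr\'e $k-j$ obtenus en contractant cette for\^et en un sommet --- et la somme sur $0\leq j\leq k-i-1$ donne exactement le terme correctif annonc\'e. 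Le probl\`eme de \og r\'econciliation \fg\ que vous signalez \`a la fin de votre assemblage n'est donc pas une affaire d'indices: c'est le sympt\^ome d'une partition selon la mauvaise statistique.
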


\begin{proof}
Fixons $ i \in \mathbb{N}^{\ast} $. La formule est évidente pour $ k \leq i+1 $ (en utilisant l'identification $ \mathcal{G}^{(+,\underline{\varepsilon})} = \mathcal{G}^{(-,\underline{\varepsilon})} $ lorsque $ k = i+1 $). Supposons $ k \geq i+1 $. Notons $ A^{k} $ l'ensemble
$$ \left\lbrace \underline{\varepsilon} \in \left\lbrace +,- \right\rbrace^{k} \mid \varepsilon_{1} = \hdots = \varepsilon_{k-i} = + \right\rbrace \subseteq \left\lbrace +,- \right\rbrace^{k} .$$
Le complémentaire dans $ \{ +,- \}^{k} $ de $ A^{k} $ est $ \displaystyle\bigcup_{0 \leq j \leq k-i-1} A^{k}_{j} $ (l'union étant disjointe) où
$$ A^{k}_{j} = \left\lbrace \underline{\varepsilon} \in \left\lbrace +,- \right\rbrace^{k} \mid \varepsilon_{1} = \hdots = \varepsilon_{j} = + {\rm ~ et ~} \varepsilon_{j+1} = - \right\rbrace \subseteq \left\lbrace +,- \right\rbrace^{k} .$$
D'après le lemme \ref{prelim}, pour tout $ 0 \leq j \leq k-i-1 $, les ensembles $ \left\lbrace  {\rm arbre ~ de ~ } \mathcal{G}^{(+,\underline{\varepsilon})} \mid \underline{\varepsilon} \in A^{k}_{j} \right\rbrace $ sont disjoints. De plus
\begin{eqnarray*}
{\rm card} \left( \left\lbrace  {\rm arbre ~ de ~ } \mathcal{G}^{(+,\underline{\varepsilon})} \mid \underline{\varepsilon} \in \{ +,- \}^{k} \right\rbrace \right) = f_{1,k+1}^{\mathcal{B}^{\infty}},\\
{\rm card} \left(  \left\lbrace  {\rm arbre ~ de ~ } \mathcal{G}^{(+,\underline{\varepsilon})} \mid \underline{\varepsilon} \in A^{k}_{j} \right\rbrace \right) = f_{j+1}^{\mathcal{B}^{0}} f_{1,k-j}^{\mathcal{B}^{\infty}}.
\end{eqnarray*}
Alors
\begin{eqnarray*} \begin{array}{rcl}
f_{1,k+1}^{\mathcal{B}^{i}} & = & {\rm card} \left( \left\lbrace  {\rm arbre ~ de ~ } \mathcal{G}^{(+,\underline{\varepsilon})} \mid \underline{\varepsilon} \in A^{k} \right\rbrace \right) \\
& = & {\rm card} \left( \left\lbrace  {\rm arbre ~ de ~ } \mathcal{G}^{(+,\underline{\varepsilon})} \mid \underline{\varepsilon} \in \{ +,- \}^{k} \right\rbrace \right) \\
& & \hspace{5cm} - {\rm card} \left(  \left\lbrace  {\rm arbre ~ de ~ } \mathcal{G}^{(+,\underline{\varepsilon})} \mid \underline{\varepsilon} \in \displaystyle\bigcup_{0 \leq j \leq k-i-1} A^{k}_{j} \right\rbrace \right) \\
& = & f_{1,k+1}^{\mathcal{B}^{\infty}} - {\rm card} \left( \displaystyle\bigcup_{0 \leq j \leq k-i-1} \left\lbrace  {\rm arbre ~ de ~ } \mathcal{G}^{(+,\underline{\varepsilon})} \mid \underline{\varepsilon} \in A^{k}_{j} \right\rbrace \right) \\
& = & f_{1,k+1}^{\mathcal{B}^{\infty}} - \displaystyle\sum_{0 \leq j \leq k-i-1} {\rm card} \left(  \left\lbrace  {\rm arbre ~ de ~ } \mathcal{G}^{(+,\underline{\varepsilon})} \mid \underline{\varepsilon} \in A^{k}_{j} \right\rbrace \right)\\
& = & f_{1,k+1}^{\mathcal{B}^{\infty}} - \displaystyle\sum_{0 \leq j \leq k-i-1} f_{j+1}^{\mathcal{B}^{0}} f_{1,k-j}^{\mathcal{B}^{\infty}} .
\end{array}
\end{eqnarray*}
\vspace{-0.3cm}
\end{proof}
\\

Voici quelques valeurs numériques:

\begin{enumerate}
\item Pour les arbres:
$$\begin{array}{c|c|c|c|c|c|c|c|c}
k&1&2&3&4&5&6&7&8\\
\hline f_{1,k}^{\mathcal{B}^{0}}&1&1&2&5&14&42&132&429\\
\hline f_{1,k}^{\mathcal{B}^{1}}&1&2&4&10&28&84&264&858\\
\hline f_{1,k}^{\mathcal{B}^{2}}&1&2&6&14&38&112&348&1122\\
\hline f_{1,k}^{\mathcal{B}^{3}}&1&2&6&20&50&142&432&1374\\
\hline f_{1,k}^{\mathcal{B}^{4}}&1&2&6&20&70&182&532&1654\\
\hline f_{1,k}^{\mathcal{B}^{5}}&1&2&6&20&70&252&672&2004\\
\hline f_{1,k}^{\mathcal{B}^{6}}&1&2&6&20&70&252&924&2508\\
\end{array}$$
\item Pour les forêts:
$$\begin{array}{c|c|c|c|c|c|c|c|c}
k&1&2&3&4&5&6&7&8\\
\hline f_{k}^{\mathcal{B}^{0}}&1&2&5&14&42&132&429&1430\\
\hline f_{k}^{\mathcal{B}^{1}}&1&3&9&29&97&333&1165&4135\\
\hline f_{k}^{\mathcal{B}^{2}}&1&3&11&37&129&461&1669&6107\\
\hline f_{k}^{\mathcal{B}^{3}}&1&3&11&43&153&557&2065&7739\\
\hline f_{k}^{\mathcal{B}^{4}}&1&3&11&43&173&637&2385&9059\\
\hline f_{k}^{\mathcal{B}^{5}}&1&3&11&43&173&707&2665&10179\\
\hline f_{k}^{\mathcal{B}^{6}}&1&3&11&43&173&707&2917&11187\\
\end{array}$$
\end{enumerate}

\section{L'algèbre de Hopf $ \mathcal{B} $}

\subsection{Construction de $ \mathcal{B} $ et premières propriétés}

A partir d'une construction similaire à celle décrite dans la partie \ref{partie1}, nous allons définir une nouvelle algèbre notée $ \mathcal{B} $ contenant l'algèbre de Hopf $ \mathcal{B}^{\infty} $. Pour cela, on construit inductivement un ensemble d'arbres $ \mathcal{T}^{(n,\varepsilon)} $ de degré $ n \geq 1 $, pour $ \varepsilon \in \lbrace +,- \rbrace $.\\

Si $ \underline{n=1} $, $ \mathcal{T}^{(1,\varepsilon)} $, pour $ \varepsilon $ quelconque, est l'ensemble réduit à un seul élément, l'unique arbre de degré 1.\\

Si $ \underline{n \geq 2} $, supposons les ensembles $ \mathcal{T}^{(k,\varepsilon)} $ construits, pour tout $ 1 \leq k \leq n-1 $.

\begin{enumerate}
\item Si $ \varepsilon=- $, les arbres $ T $ de $ \mathcal{T}^{(n,-)} $ sont construits comme suit. On prend une forêt $ F=T_{1} \hdots T_{m} $ de degré $ n-1 $ construite à partir de $ m \geq 1 $ arbres appartenant à l'ensemble $ \bigcup_{1 \leq k \leq n-1,\varepsilon \in \lbrace +,- \rbrace} \mathcal{T}^{(k,\varepsilon)} $ et on considère $ T_{1}, \hdots ,T_{m} $ comme la suite des sous-arbres d'un arbre enraciné ayant pour racine le sommet indexé par $ n $. Cela donne ainsi naissance à un nouvel arbre ordonné $ T=B^{-}(T_{1}, \hdots ,T_{m}) $ de degré $ n $.
\item Si $ \varepsilon=+ $, on construit alors les arbres $ T $ de $ \mathcal{T}^{(n,+)} $ par la transformation suivante. On prend une forêt $ F=T_{1} \hdots T_{m} $ de degré $ n-1 $ avec $ T_{1}, \hdots ,T_{m} \in \bigcup_{1 \leq k \leq n-1,\varepsilon \in \lbrace +,- \rbrace} \mathcal{T}^{(k,\varepsilon)} $ et $ m \geq 1 $. On construit alors $ T $ en greffant le sommet indexé par $ n $ comme le fils le plus à droite de la racine de $ T_{1} $ et en considerant $ T_{2}, \hdots ,T_{m} $ comme la suite des sous-arbres issus du sommet indexé par $ n $. Cela donne un nouvel arbre ordonné $ T=B^{+}(T_{1}, \hdots ,T_{m}) $ de degré $ n $.
\end{enumerate}

Considérons un arbre $ T $ construit à partir des instructions précédentes, de degré $ n $. Soit le sommet indexé par $ n $ est la racine et $ T \in \mathcal{T}^{(n,-)} $, soit le sommet indexé par $ n $ est le fils le plus à droite de la racine et $ T \in \mathcal{T}^{(n,+)} $. De plus, deux arbres de $ \mathcal{T}^{(n,-)} $ (resp. $ \mathcal{T}^{(n,+)} $) sont égaux si et seulement si les forêts à partir desquelles ils sont construits sont égales. Ainsi, les arbres construits avec les instructions précédentes sont tous distincts. En particulier, $ {\rm card} \left( \mathcal{T}^{(n,+)} \right) = {\rm card} \left( \mathcal{T}^{(n,-)} \right) $, $ \forall ~ n \geq 1 $.\\

On pose alors $ \mathcal{T} $ l'ensemble $ \bigcup_{n \geq 1,\varepsilon \in \lbrace +,- \rbrace } \mathcal{T}^{(n,\varepsilon)} $, l'union étant disjointe (on identifie $ \mathcal{T}^{(1,-)} $ et $ \mathcal{T}^{(1,+)} $). Voici une illustration pour $ n=1,2,3 $ et $ 4 $:

\begin{eqnarray*}
\mathcal{T}^{(1,-)} = \mathcal{T}^{(1,+)} & = & \{\tdun{1}\}\\
\mathcal{T}^{(2,+)} & = & \{\tddeux{1}{2}\}\\
\mathcal{T}^{(2,-)} & = & \{\tddeux{2}{1}\}\\
\mathcal{T}^{(3,+)} & = & \{\tdtroisun{1}{3}{2},\tdtroisun{2}{3}{1},\tdtroisdeux{1}{3}{2}\}\\
\mathcal{T}^{(3,-)} & = & \{\tdtroisdeux{3}{1}{2},\tdtroisdeux{3}{2}{1},\tdtroisun{3}{2}{1}\}\\
\mathcal{T}^{(4,+)} & = & \{\tdquatreun{1}{4}{3}{2},\tdquatredeux{3}{4}{1}{2} ,\tdquatreun{2}{4}{3}{1},\tdquatredeux{3}{4}{2}{1} ,\tdquatreun{3}{4}{2}{1},\tdquatredeux{1}{4}{3}{2} ,\tdquatretrois{1}{4}{3}{2} ,\tdquatrecinq{1}{4}{2}{3} ,\tdquatretrois{2}{4}{3}{1} ,\tdquatrecinq{1}{4}{3}{2} ,\tdquatrequatre{1}{4}{3}{2} \}\\
\mathcal{T}^{(4,-)} & = & \{ \tdquatrequatre{4}{1}{3}{2},\tdquatrecinq{4}{3}{1}{2} ,\tdquatrequatre{4}{2}{3}{1},\tdquatrecinq{4}{3}{2}{1} ,\tdquatrequatre{4}{3}{2}{1},\tdquatrecinq{4}{1}{3}{2} ,\tdquatredeux{4}{3}{1}{2} ,\tdquatretrois{4}{2}{3}{1} ,\tdquatredeux{4}{3}{2}{1} ,\tdquatretrois{4}{3}{2}{1} ,\tdquatreun{4}{3}{2}{1} \}\\
\end{eqnarray*}

Soit $ T $ un arbre enraciné plan non vide $ \in \mathcal{H}_{PR} $. Ici aussi il est possible de dénombrer le nombre de façons d'indexer les sommets de $ T $ pour obtenir un élément de $ \mathcal{T} $ :

\begin{prop} \label{ordre}
Soit $ T $ un arbre plan. On définit pour chaque sommet $ v $ de $ T $, un entier $ a_{v} $ par récurrence sur la hauteur:
\begin{enumerate}
\item Si $ v $ est une feuille, $ a_{v} = 1 $.
\item Sinon,
$$ a_{v} = \prod_{v' \rightarrow v} a_{v'}  + \sum_{v' \rightarrow v} \left( \prod_{v'' \rightarrow v {\rm ~ tq ~ }  v'' < v' } a_{v''}\right)  \left(  \prod_{v'' \rightarrow w \rightarrow v {\rm ~ tq ~ }  v' \leq w} a_{v''} \right) ,$$
l'ordre sur les sommets de $ T $ étant celui défini dans l'introduction.
\end{enumerate}
Alors, il y a $ a_{racine(T)} $ façons d'indexer les sommets de $ T $ pour obtenir un élément de $ \mathcal{T} $.
\end{prop}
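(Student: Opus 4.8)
I will prove, by induction on the degree $n=|T|$, that for every plane tree $T$ the number $b(T)$ of ways of indexing $V(T)$ by $1,\ldots,n$ so as to obtain an element of $\mathcal{T}$ equals $a_{racine(T)}$, where $a$ is defined by the recursion of the statement. The case $n=1$ is immediate: $T=\tun$ admits a single indexing and $a_{racine(T)}=1$. For the inductive step write $T=B(T_1,\ldots,T_f)$, where $T_1,\ldots,T_f$ are, from left to right, the subtrees growing from the children $c_1<\cdots<c_f$ of the root $r$. Here I use that the recursion defining $a_v$ only involves the subtree rooted at $v$ together with its planar order, so the value $a_{c_i}$ computed inside $T$ agrees with the one computed inside $T_i$.

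The key structural observation is that in any element of $\mathcal{T}$ of degree $n$ the vertex indexed $n$ is either the root, precisely when the tree lies in $\mathcal{T}^{(n,-)}$, or the rightmost child of the root, precisely when it lies in $\mathcal{T}^{(n,+)}$; this is immediate from the definitions of $B^{-}$ and $B^{+}$. Hence an indexing of $T$ yielding an element of $\mathcal{T}$ either puts $n$ on $r$ or on $c_f$, and since $n\geq 2$ these two families are disjoint.

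Now I count. If $n$ sits on $r$, then $T=B^{-}(T_1,\ldots,T_f)$, which forces each $T_i$ (after renumbering its vertices by $1,\ldots,|T_i|$) to lie in $\mathcal{T}$; since the blocks of labels used by $T_1,\ldots,T_f$ are consecutive and in left-to-right order, such indexings correspond bijectively to independent choices of a $\mathcal{T}$-indexing for each $T_i$, giving $\prod_{i=1}^{f}b(T_i)=\prod_{v'\rightarrow r}a_{v'}$ by the induction hypothesis. If $n$ sits on $c_f$, then $T=B^{+}(S_1,\ldots,S_\ell)$, and the planar shape forces $S_2,\ldots,S_\ell$ to be the subtrees of $c_f$ (in order) and $S_1$ to be the tree $T^{(f-1)}:=B(T_1,\ldots,T_{f-1})$ obtained from $T$ by deleting the branch at $c_f$; the same consecutive-blocks argument shows these indexings correspond to independent choices of a $\mathcal{T}$-indexing of $T^{(f-1)}$ and of each subtree of $c_f$, so their number is $b(T^{(f-1)})\cdot\prod_{c''\rightarrow c_f}a_{c''}$ (using induction for the subtrees of $c_f$, all of degree $<n$). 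Thus
$$b(T)=\prod_{i=1}^{f}a_{c_i}+\Big(\prod_{c''\rightarrow c_f}a_{c''}\Big)\,b\big(T^{(f-1)}\big),$$
and $|T^{(f-1)}|<n$, so induction applies to $T^{(f-1)}$ too.

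It remains to check that this recursion reproduces the claimed formula. Iterating the displayed identity on $T^{(f-1)},T^{(f-2)},\ldots,T^{(1)}$ (each obtained from the previous one by deleting the rightmost branch at the root) and on $T^{(0)}=\tun$ with $b(T^{(0)})=1$ telescopes to
$$b(T)=\prod_{i=1}^{f}a_{c_i}+\sum_{k=1}^{f}\Big(\prod_{i=1}^{k-1}a_{c_i}\Big)\Big(\prod_{i=k}^{f}\prod_{c''\rightarrow c_i}a_{c''}\Big),$$
and the substitution $v'=c_k$ identifies the right-hand side with $a_r$: for $v'=c_k$ one has $\prod_{v''\rightarrow r,\,v''<v'}a_{v''}=\prod_{i=1}^{k-1}a_{c_i}$ and $\prod_{v''\rightarrow w\rightarrow r,\,v'\leq w}a_{v''}=\prod_{i=k}^{f}\prod_{c''\rightarrow c_i}a_{c''}$, while $\prod_{v'\rightarrow r}a_{v'}=\prod_{i=1}^{f}a_{c_i}$. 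This closes the induction. The delicate point is the analysis of the $B^{+}$ case: one must verify carefully that the top-indexed vertex is forced onto the rightmost child of the root and that the decomposition $T=B^{+}(S_1,\ldots,S_\ell)$ with $S_1=T^{(f-1)}$ is the only possible one, so that the count genuinely factors as a product over $T^{(f-1)}$ and the subtrees of $c_f$; once that is granted, the telescoping and the matching with the stated formula are routine bookkeeping.
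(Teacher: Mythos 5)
Your proof is correct and rests on the same key fact as the paper's: in an element of $\mathcal{T}$ of degree $n$ the vertex labelled $n$ is either the root (case $B^{-}$) or the rightmost child of the root (case $B^{+}$), which forces the decomposition at the root and lets the count factor over the subtrees. The only difference is organizational: you peel off one rightmost branch at a time and telescope a two-term recursion, whereas the paper unrolls the same analysis into the $f+1$ possible root configurations at once; the resulting sums coincide.
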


\begin{proof}
Pour éviter d'alourdir les notations, on notera indifférement un arbre appartenant à $ \mathcal{H}_{PR} $ et l'arbre (ou les arbres) obtenu après indexation des sommets appartenant à $ \mathcal{T} $.\\

Par récurrence sur le degré $ k $ de $ T \in \mathcal{T} $. Si $ k = 1 $, c'est trivial. Supposons $ k \geq 2 $ et le résultat vérifié au rang inférieur. On utilise les mêmes notations que pour la démonstration de la proposition \ref{2}: $ T = B(T_{1}, \hdots , T_{n}) $, avec $ T_{1}=B(T_{1,1}, \hdots T_{1,m_{1}}), \hdots , T_{n}=B(T_{n,1}, \hdots , T_{n,m_{n}}) $ (si $ T_{i} = \tun $, $ T_{i} = B(T_{i,1}) $ avec $ T_{i,1}=1 $). Comme dans la preuve de la proposition \ref{2}, on ne cherchera pas ici à simplifier un produit d'arbres en supprimant les éventuels arbres vides. Il y a $ n+1 $ situations possibles au niveau de la racine de $ T $: pour $ 1 \leq i \leq n $,
$$ \overbrace{B^{+}(\hdots B^{+}}^{n-i ~ {\rm fois}}(B^{-}(T_{1} , \hdots , T_{i}), T_{i+1,1} , \hdots , T_{i+1,m_{i+1}}), \hdots ), T_{n,1} , \hdots, T_{n,m_{n}}) \in \mathcal{T} ,$$
et $ B^{+}( \hdots B^{+}(\tdun{1}, T_{1,1}, \hdots , T_{1,m_{1}}), \hdots ), T_{n,1} , \hdots, T_{n,m_{n}}) \in \mathcal{T} $. Pour deux valeurs distinctes de $ i $, l'indexation des sommets de $ T $ sera différente. Fixons un $ 0 \leq i \leq n $. Par hypothèse de récurrence, il y a $ a_{racine(T_{j})} $ façons d'indexer $ T_{j} $ en un élément de $ \mathcal{T} $, pour $ 1 \leq j \leq i $; et $ a_{racine(T_{j,l})} $ façons d'indexer $ T_{j,l} $ en un élément de $ \mathcal{T} $, pour $ i+1 \leq j \leq n $ et $ 1 \leq l \leq m_{j} $ (si $ T_{j,l} $ est différent de l'arbre vide). Au total, il y a bien
$$ a_{racine(T)} = \prod_{v' \rightarrow r} a_{v'}  + \sum_{v' \rightarrow r} \left( \prod_{v'' \rightarrow r {\rm ~ tq ~ }  v'' < v' } a_{v''}\right)  \left(  \prod_{v'' \rightarrow w \rightarrow r {\rm ~ tq ~ }  v' \leq w} a_{v''} \right) $$
façons d'indexer les sommets de $ T $ pour obtenir un élément de $ \mathcal{T} $.
\end{proof}
\\

Définissons l'algèbre $ \mathcal{B} $ comme l'algèbre librement engendrée par l'ensemble $ \mathcal{T} \cup \left\lbrace 1 \right\rbrace $. On a alors la propriété remarquable suivante :

\begin{prop} L'algèbre $ \mathcal{B} $ est une algèbre de Hopf.
\end{prop}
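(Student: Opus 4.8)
The strategy mirrors exactly the proof that $\mathcal{B}^{\infty}$ is a Hopf algebra: since $\mathcal{B}$ is freely generated as an algebra by $\mathcal{T} \cup \{1\}$, it suffices to show that the coproduct $\Delta$ of \eqref{coproduitordonn�} is well-defined on $\mathcal{B}$, i.e. that for any tree $T \in \mathcal{T}$ and any admissible cut $\boldsymbol{v} \models V(T)$, one has $Lea_{\boldsymbol{v}}(T) \in \mathbb{K}[\mathcal{T}]$ (a product of trees of $\mathcal{T}$) and $Roo_{\boldsymbol{v}}(T) \in \mathbb{K}[\mathcal{T} \cup \{1\}]$. As usual, it is enough to treat a simple cut $\boldsymbol{v}$ (card $\boldsymbol{v} = 1$), because an arbitrary admissible cut decomposes through simple cuts once stability under the product is known, and $\mathcal{B}$ is stable under concatenation by construction. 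Then I would induct on the degree $n$ of $T$, the cases $n \leq 3$ being checked directly.

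For the inductive step $n \geq 4$, write $T = B^{-}(T_1, \ldots, T_m)$ or $T = B^{+}(T_1, \ldots, T_m)$ with $T_1 \ldots T_m$ a forest of degree $n-1$ built from trees in $\mathcal{T}$; here, crucially, \emph{all} of $T_1, \ldots, T_m$ lie in $\mathcal{T}$ — this is the key difference from the $\mathcal{B}^{\infty}$ case, where one needed Lemma~\ref{1} to control the tails, and it actually makes the present argument simpler. In the $B^{-}$ case: a simple cut is either the total cut (trivial), or lies inside a unique $T_i$; by induction $Lea_{\boldsymbol{v}}(T_i) \in \mathbb{K}[\mathcal{T}]$ and $Roo_{\boldsymbol{v}}(T_i) \in \mathbb{K}[\mathcal{T} \cup \{1\}]$, so $Lea_{\boldsymbol{v}}(T) = Lea_{\boldsymbol{v}}(T_i)$ is a forest of trees in $\mathcal{T}$, and $Roo_{\boldsymbol{v}}(T) = B^{-}(T_1, \ldots, Roo_{\boldsymbol{v}}(T_i), \ldots, T_m)$ is again of the form $B^{-}$ of a forest of trees in $\mathcal{T} \cup \{1\}$, hence in $\mathcal{T}$ (deleting the empty tree if $Roo_{\boldsymbol{v}}(T_i) = 1$). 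In the $B^{+}$ case: the cut of the edge joining the root of $T$ to the vertex indexed by $n$ gives $Roo_{\boldsymbol{v}}(T) = T_1 \in \mathcal{T}$ and $Lea_{\boldsymbol{v}}(T) = B^{-}(T_2, \ldots, T_m) \in \mathcal{T}$; otherwise the cut lies inside a unique $T_i$ and one argues as before, using that $B^{+}(Roo_{\boldsymbol{v}}(T_1), T_2, \ldots, T_m)$ and $B^{+}(T_1, \ldots, Roo_{\boldsymbol{v}}(T_i), \ldots, T_m)$ are again elements of $\mathcal{T}$ by construction, since their defining forests have all components in $\mathcal{T} \cup \{1\}$.

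The main point to get right — and the only place a subtlety hides — is that the branch $Lea_{\boldsymbol{v}}(T)$ genuinely lies in $\mathbb{K}[\mathcal{T}]$, not merely in $\mathcal{H}_o$: one must check that $Lea_{\boldsymbol{v}}(T_i)$, which by induction is a product of trees of $\mathcal{T}$, remains a valid forest for forming $B^{-}$ or for sitting above a $B^{+}$ grafting point, and that $B^{-}(T_2, \ldots, T_m)$ with $T_2, \ldots, T_m \in \mathcal{T}$ is in $\mathcal{T}$ — but this is immediate from the definition of $\mathcal{T}^{(n,-)}$, since \emph{any} forest whose components lie in $\bigcup_{k,\varepsilon}\mathcal{T}^{(k,\varepsilon)}$ yields an element of $\mathcal{T}^{(\cdot,-)}$. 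Compared to $\mathcal{B}^{\infty}$, where $\mathcal{G}$ is not stable under concatenation and one is forced through Lemmas~\ref{1} and~\ref{comod}, here there is no constraint on which trees can be concatenated inside a $B^{\pm}$, so no analogue of those lemmas is needed; the induction closes cleanly. I therefore expect no real obstacle — the proof is essentially a streamlined copy of the $\mathcal{B}^{\infty}$ argument with the forest-tail bookkeeping removed.
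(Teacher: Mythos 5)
Your proposal is correct and follows essentially the same route as the paper: reduction to simple cuts, induction on the degree, case split on $B^{-}$ versus $B^{+}$, and the observation that (unlike for $\mathcal{G}$) all components $T_1,\ldots,T_m$ already lie in $\mathcal{T}$ so no analogue of Lemmas \ref{1} and \ref{comod} is needed. The only cosmetic remark is that for a simple cut $Lea_{\boldsymbol{v}}(T)$ is a single tree of $\mathcal{T}$, not merely a forest, which is exactly what the induction hypothesis requires.
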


\begin{proof}
Montrons que, en réalisant une coupe simple d'un arbre appartenant à $ \mathcal{T} $, la branche et le tronc appartiennent respectivement à $ \mathcal{T} $ et $ \mathcal{T} \cup \left\lbrace 1 \right\rbrace $. On travaille par récurrence sur le degré des arbres. Le résultat est trivial pour $ n=1,2 $ et $ 3 $. Au rang $ n \geq 4 $. Considérons un arbre $ T \in \mathcal{T} $ de degré $ n $ et $ \boldsymbol{v} \models V(T) $ une coupe simple. Il y a deux cas:

\begin{enumerate}
\item Si l'arbre est de la forme $ T = B^{-}(T_{1},\hdots,T_{m}) \in \mathcal{T}^{(n,-)} $, avec $ m \geq 1 $. Par construction, $ T_{1}, \hdots ,T_{m} \in \mathcal{T} $. Si $ \boldsymbol{v} $ est la coupe totale, le résultat est évident. Sinon, comme $ \boldsymbol{v} $ est une coupe simple de $ T $, il existe un unique $ i \in \left\lbrace 1,\hdots,m \right\rbrace $ tel que $ \boldsymbol{v} \models V(T_{i}) $ (on inclut le cas de la coupe totale qui correspond à couper l'arête entre la racine de $ T $ et celle de $ T_{i} $). Par récurrence, comme $ T_{i} \in \mathcal{T} $, $ Lea_{\boldsymbol{v}}(T) = Lea_{\boldsymbol{v}}(T_{i}) $ appartient à $ \mathcal{T} $. De même, par récurrence $ Roo_{\boldsymbol{v}}(T_{i}) $ appartient à $ \mathcal{T} \cup \left\lbrace 1 \right\rbrace $ donc la forêt $ T_{1} \hdots Roo_{\boldsymbol{v}}(T_{i}) \hdots T_{m} $ est constituée d'arbres appartenant à $ \mathcal{T} \cup \left\lbrace 1 \right\rbrace $ et ainsi $ Roo_{\boldsymbol{v}}(T) = B^{-}(T_{1}, \hdots, Roo_{\boldsymbol{v}}(T_{i}), \hdots ,T_{m}) \in \mathcal{T} $.
\item Si l'arbre est de la forme $ T = B^{+}(T_{1},\hdots,T_{m}) \in \mathcal{T}^{(n,+)} $, avec $ m \geq 1 $. Comme précédemment, $ T_{1}, \hdots ,T_{m} \in \mathcal{T} $. Alors:
\begin{enumerate}
\item Si $ \boldsymbol{v} $ est la coupe simple correspondant à couper l'arête joignant la racine de $ T $ (qui est la racine de $ T_{1} $) et le sommet indexé par $ n $ joignant les racines communes de $ T_{2}, \hdots , T_{m} $, alors $ Roo_{\boldsymbol{v}}(T) = T_{1} \in \mathcal{T} $ et $ Lea_{\boldsymbol{v}}(T) = B^{-}(T_{2}, \hdots ,T_{m}) \in \mathcal{T} $. Le résultat est donc démontré dans ce cas.
\item Sinon, comme $ \boldsymbol{v} $ est une coupe simple de $ T $, il existe un unique $ i \in \left\lbrace 1,\hdots,m \right\rbrace $ tel que $ \boldsymbol{v} \models V(T_{i}) $ (on inclut le cas de la coupe totale qui, si $ i \geq 2 $, correspond à couper l'arête entre le sommet de $ T $ indexé par $ n $ et la racine de $ T_{j} $). Deux cas sont à distinguer:
\begin{enumerate}
\item Si $ i = 1 $. Si $ \boldsymbol{v} \models V(T_{1}) $ est la coupe totale, alors $ Roo_{\boldsymbol{v}}(T) = 1 $ et $ Lea_{\boldsymbol{v}}(T) = T $ et le résultat est trivial. Sinon, par récurrence, $ Lea_{\boldsymbol{v}}(T) = Lea_{\boldsymbol{v}}(T_{1}) \in \mathcal{T} $ et $ Roo_{\boldsymbol{v}}(T_{1}) \in \mathcal{T} $, donc $ Roo_{\boldsymbol{v}}(T) = B^{+}( Roo_{\boldsymbol{v}}(T_{1}), T_{2}, \hdots, T_{m}) \in \mathcal{T} $.
\item Si $ i \geq 2 $. Par récurrence, $ Lea_{\boldsymbol{v}}(T) = Lea_{\boldsymbol{v}}(T_{i}) $ appartient à $ \mathcal{T} $ et $ Roo_{\boldsymbol{v}}(T_{i}) $ appartient à $ \mathcal{T} \cup \left\lbrace 1 \right\rbrace $. Donc $ Roo_{\boldsymbol{v}}(T) = B^{+}(T_{1}, \hdots ,Roo_{\boldsymbol{v}}(T_{i}), \hdots, T_{m}) \in \mathcal{T} $.
\end{enumerate}
\end{enumerate}
\end{enumerate}
Ainsi, dans tous les cas, $ Roo_{\boldsymbol{v}}(T) \in \mathcal{T} \cup \left\lbrace 1 \right\rbrace $ et $ Lea_{\boldsymbol{v}}(T) \in \mathcal{T} $, et on peut conclure par le principe de récurrence.
\end{proof}
\\

{\bf Remarque.} {Les relations d'inclusion suivantes sont évidement vérifiées :
$$ \mathcal{B}^{0} \subseteq \hdots \subseteq \mathcal{B}^{i} \subseteq \hdots \subseteq \mathcal{B}^{\infty} \subseteq \mathcal{B} .$$
Rappelons que $ \mathcal{B}^{0} $ est la sous-algèbre de $ \mathcal{B} $ engendrée par les arbres construits uniquement avec des $ B^{+} $. De la même façon, on peut définir une sous-algèbre de $ \mathcal{B} $ en considérant la sous-algèbre engendrée par les arbres de $ \mathcal{B} $ qui sont construits uniquement avec des $ B^{-} $. Notons-la $ \mathcal{B}_{l} $ (cette terminologie est justifiée par le théorème \ref{libregauche}). Elle est clairement stable par coupe admissible, c'est donc une algèbre de Hopf. Il existe un isomorphisme d'algèbres de Hopf entre $ \mathcal{B}_{l} $ et $ \mathcal{H}_{PR} $: à chaque arbre de $ \mathcal{H}_{PR} $ il y a une seule et unique façon de numéroter les sommets (en numérotant les sommets dans l'ordre croissant pour l'ordre défini dans l'introduction); cela définit une bijection entre les arbres de $ \mathcal{B}_{l} $ et les arbres de $ \mathcal{H}_{PR} $ qui s'étend en un isomorphisme d'algèbres graduées respectant le coproduit.}\\

Il est possible de calculer la série formelle de l'algèbre $ \mathcal{B} $ :

\begin{prop}
La série formelle de l'algèbre de Hopf $ \mathcal{B} $ est donnée par la formule:
$$ F_{\mathcal{B}}(x) = \dfrac{1+x-\sqrt{1-6x+x^{2}}}{4x} .$$
\end{prop}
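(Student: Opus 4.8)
The goal is to compute the generating series $F_{\mathcal{B}}(x) = \sum_{k \geq 1} f_{1,k}^{\mathcal{B}} x^k$ (or the forest series) for the Hopf algebra $\mathcal{B} = \mathbb{K}[\mathcal{T} \cup \{1\}]$. Since $\mathcal{B}$ is freely generated as an algebra by $\mathcal{T}$, the forest series $F_{\mathcal{B}}$ and the tree series $T_{\mathcal{B}}(x) := \sum_{k\geq 1}(\operatorname{card}\mathcal{T}^{(k,+)} + \operatorname{card}\mathcal{T}^{(k,-)})\,x^k - \ldots$ are related by $F_{\mathcal{B}}(x) = \frac{1}{1 - T_{\mathcal{B}}(x)}$. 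So it suffices to find a functional equation for the tree series, and the plan is to extract it directly from the inductive construction of $\mathcal{T}$.

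First I would set $t(x) = \sum_{k \geq 1} \operatorname{card}(\mathcal{T}^{(k,+)})\, x^k = \sum_{k\geq 1}\operatorname{card}(\mathcal{T}^{(k,-)})\,x^k$ (equal by the remark that $\operatorname{card}\mathcal{T}^{(n,+)} = \operatorname{card}\mathcal{T}^{(n,-)}$), so the total tree series is $T_{\mathcal{B}}(x) = 2t(x) - x$ (the two one-vertex sets being identified). A tree in $\mathcal{T}^{(n,-)}$ is $B^-(T_1,\dots,T_m)$ for a forest $T_1\cdots T_m$ of degree $n-1$ whose factors lie in $\mathcal{T}$, with no constraint; and similarly a tree in $\mathcal{T}^{(n,+)}$ is $B^+(T_1,\dots,T_m)$ for such a forest, again with no constraint and the construction being injective. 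Hence $\operatorname{card}\mathcal{T}^{(n,-)}$ equals the number of nonempty forests of degree $n-1$ with factors in $\mathcal{T}$ (which is the forest-count of $\mathcal{B}$ in degree $n-1$, for $n \geq 2$), i.e. $t(x) - x = x\big(F_{\mathcal{B}}(x) - 1\big)$ where $F_{\mathcal{B}}(x) = \frac{1}{1 - T_{\mathcal{B}}(x)} = \frac{1}{1 - 2t(x) + x}$.

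Combining these gives a single algebraic equation in $t(x)$ (equivalently in $y := F_{\mathcal{B}}(x)$): from $t - x = x(y - 1)$ we get $t = xy$, and from $y = \frac{1}{1 - 2t + x}$ we get $y(1 - 2xy + x) = 1$, i.e. $2xy^2 - (1+x)y + 1 = 0$. Solving this quadratic for $y$ and choosing the branch with $y(0) = 1$ yields
$$ F_{\mathcal{B}}(x) = \frac{(1+x) - \sqrt{(1+x)^2 - 8x}}{4x} = \frac{1 + x - \sqrt{1 - 6x + x^2}}{4x}, $$
which is the claimed formula. I would double-check the branch and the first few coefficients against the known small cases ($|\mathcal{T}^{(2,\pm)}| = 1$, $|\mathcal{T}^{(3,\pm)}| = 3$, $|\mathcal{T}^{(4,\pm)}| = 11$, so $T_{\mathcal{B}}$ starts $x + 2x^2 + 6x^3 + 22x^4 + \cdots$ and $F_{\mathcal{B}}$ starts $1 + x + 3x^2 + 11x^3 + 45x^4 + \cdots$).

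The only real subtlety — and the step I'd be most careful about — is justifying that $\operatorname{card}\mathcal{T}^{(n,-)}$ (and $\operatorname{card}\mathcal{T}^{(n,+)}$) equals the number of \emph{arbitrary} nonempty ordered forests of degree $n-1$ with all factors in $\mathcal{T}$, with no compatibility condition between consecutive factors (in contrast to the $\mathcal{G}$ case, governed by Lemma \ref{1}). This is exactly what the construction of $\mathcal{T}$ in Section 2.1 asserts — any forest $T_1\cdots T_m$ with $T_i \in \bigcup_{k,\varepsilon}\mathcal{T}^{(k,\varepsilon)}$ is allowed as input to $B^\pm$ — together with the stated injectivity (two trees in $\mathcal{T}^{(n,\pm)}$ are equal iff the source forests are equal). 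Once that bookkeeping is in place, the generating-function manipulation is the routine quadratic solved above.
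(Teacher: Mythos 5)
Your proposal is correct and follows essentially the same route as the paper: the relation $f_{1,n}^{\mathcal{B}} = 2 f_{n-1}^{\mathcal{B}}$ for $n\geq 2$ (each of $\mathcal{T}^{(n,+)}$ and $\mathcal{T}^{(n,-)}$ being in bijection with the nonempty forests of degree $n-1$), combined with freeness $F_{\mathcal{B}} = 1/(1-T_{\mathcal{B}})$, yields a quadratic equivalent to the paper's $T_{\mathcal{B}}^2+(x-1)T_{\mathcal{B}}+x=0$, and the branch choice is handled the same way. The combinatorial point you flag as the only subtlety is exactly the content of the paper's stated recurrence, so nothing is missing.
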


\begin{proof}
Notons $ f^{\mathcal{B}}_{n} $ le nombre de forêts de degré $ n $ de $ \mathcal{B} $ et $ f_{1,n}^{\mathcal{B}} $ le nombre d'arbres de degré $ n $. On déduit de la construction de $ \mathcal{B} $ les relations suivantes :
\begin{eqnarray*}
f_{1}^{\mathcal{B}} & = & 1 \\
f_{1,1}^{\mathcal{B}} & = & 1 \\
f_{1,n}^{\mathcal{B}} & = & 2 f_{n-1}^{\mathcal{B}}  ~ {\rm si} ~ n \geq 2.
\end{eqnarray*}

Introduisons la convention suivante: $ f_{1,0}^{\mathcal{B}} = 0 $ et $ f_{0}^{\mathcal{B}} = 1 $. On pose alors $ F_{\mathcal{B}}(x)=\sum_{n \geq 0} f^{\mathcal{B}}_{n} x^{n} $ et $ T_{\mathcal{B}}(x) = \sum_{n \geq 0} f_{1,n}^{\mathcal{B}} x^{n} $. L'algèbre étant libre,
$$ F_{\mathcal{B}}(x) = \dfrac{1}{1-T_{\mathcal{B}}(x)} .$$
Alors :
\begin{eqnarray*}
T_{\mathcal{B}}(x) - x & = & \sum_{n \geq 2} f_{1,n}^{\mathcal{B}} x^{n}\\
& = & 2x \left( \sum_{n \geq 1} f_{n}^{\mathcal{B}} x^{n} \right) \\
& = & 2x \left( \dfrac{1}{1-T_{\mathcal{B}}(x)} - 1 \right) 
\end{eqnarray*}

Donc: $ T_{\mathcal{B}}^{2}(x) + (x-1)T_{\mathcal{B}}(x)+x=0 $. Comme $ f_{1,0}^{\mathcal{B}} = 0 $,
$$ T_{\mathcal{B}}(x) = \dfrac{1-x-\sqrt{1-6x+x^{2}}}{2} \hspace{0.3cm} {\rm et } \hspace{0.3cm} F_{\mathcal{B}}(x) = \dfrac{1+x-\sqrt{1-6x+x^{2}}}{4x} .$$
\end{proof}
\\

Voici quelques valeurs numériques:

$$\begin{array}{c|c|c|c|c|c|c|c|c}
k&1&2&3&4&5&6&7&8\\
\hline f_{1,k}^{\mathcal{B}}&1&2&6&22&90&394&1806&8558\\
\hline f_{k}^{\mathcal{B}}&1&3&11&45&197&903&4279&20793
\end{array}$$

Ce sont les séquences A006318 et A001003 de \cite{Sloane}.

\subsection{Coliberté de $ \mathcal{B} $}

Introduisons une nouvelle opération sur $ \mathcal{B} $:\\

\'Etant données deux forêts non vides $ F $ et $ G $ appartenant à $ \mathcal{B} $, on définit une forêt $ F \nwarrow G $ en greffant à la feuille la plus à droite de $ F $ la forêt $ G $ et en indexant les sommets comme suit: on laisse les indices de $ F $ inferieurs strictement à l'indice de sa feuille la plus à droite invariant; on numérote ensuite la forêt $ G $ en préservant l'ordre d'origine de ses indices; on finit alors en numérotant le reste des sommets non encore indexés de $ F $ en préservant ici encore l'ordre d'origine des indices dans $ F $. Si $ F $ est une forêt de $ \mathcal{B} $, on pose $ 1 \nwarrow F = F \nwarrow 1 = F $. Par linéarité, on définit ainsi une nouvelle opération $ \nwarrow : \mathcal{B} \times \mathcal{B} \rightarrow \mathcal{B} $.\\

{\bf Remarque.} {Dans le cas particulier où $ F = \tdun{1} $, $ \tdun{$1$} \nwarrow G = B^{-}(G_{1}, \hdots ,G_{n}) $, pour toute forêt $ G = G_{1} \hdots G_{n} \in \mathcal{B} $.}
\\

{\bf Exemples.} On illustre ci-dessous l'opération $ \nwarrow $:
$$\begin{array}{|rclcl|rclcl|rclcl|}
\hline \tdun{1}\tdun{2}\tdun{3} &\nwarrow& \tddeux{1}{2} &=& \tdun{1}\tdun{2}\tdtroisdeux{5}{3}{4} &\tdun{1}\tdun{2}\tdun{3} &\nwarrow& \tddeux{2}{1} &=&\tdun{1}\tdun{2}\tdtroisdeux{5}{4}{3}&
\tdun{1} \tdun{2} &\nwarrow& \tdun{1} \tdun{2} &=& \tdun{1} \tdtroisun{4}{3}{2} \\
\tddeux{2}{1} &\nwarrow& \tdun{1}\tdun{2} &=&\tdquatrequatre{4}{3}{2}{1}& \tdtroisun{2}{3}{1} &\nwarrow& \tdun{1} &=&\tdquatretrois{2}{4}{3}{1}&
\tdtroisun{1}{3}{2}&\nwarrow& \tdun{1} &=&\tdquatretrois{1}{4}{3}{2}\\
\tdun{1} \tddeux{2}{3} &\nwarrow& \tddeux{2}{1}&=&\tdun{1} \tdquatrecinq{2}{5}{4}{3}& \tddeux{2}{1} &\nwarrow& \tddeux{2}{1} &=&\tdquatrecinq{4}{3}{2}{1}&
\tddeux{1}{2}&\nwarrow&\tdun{1}\tdun{2}&=&\tdquatrequatre{1}{4}{3}{2}\\
\hline \end{array}$$

\vspace{0.5cm}

Il faut montrer que $ \mathcal{B} $ est stable par l'opération de greffe $ \nwarrow $ ainsi définie. Par définition de la greffe, il suffit de montrer le résultat lorsque $ F $ est un arbre non vide et $ G = G_{1} \hdots G_{n} $ une forêt non vide de $ \mathcal{B} $. Comme $ \mathcal{B} $ est librement engendrée par $ \mathcal{T} \cup \left\lbrace 1 \right\rbrace $ en tant qu'algèbre, $ G_{1}, \hdots, G_{n} \in \mathcal{T} $. Si $ F $ est l'arbre constitué d'un unique sommet, alors $ F \nwarrow G = B^{-}(G_{1}, \hdots , G_{n}) \in \mathcal{T} \subseteq \mathcal{B} $. Supposons maintenant $ F $ de degré $ \geq 2 $. Alors, la branche de $ F $ sur laquelle on greffe $ G $ comporte au moins deux sommets. Il y a deux cas à distinguer:
\begin{enumerate}
\item Si l'indice de la feuille la plus à droite de $ F $ (celle où on greffe) est plus grand que l'indice de son père. Dans ce cas, cette feuille a été insérée lors de la construction de $ F $ par un $ B^{+}(\hdots,1) $. Pour obtenir l'arbre $ F \nwarrow G $, il suffit alors de reproduire les opérations faites pour construire $ F $ à un changement près, en remplaçant $ B^{+}(\hdots, 1) $ par $ B^{+}(\hdots,G_{1}, \hdots , G_{n}) $. Ainsi, $ F \nwarrow G $ appartient à $ \mathcal{T} \subseteq \mathcal{B} $.
\item Si l'indice de la feuille la plus à droite de $ F $ est plus petit que l'indice de son père. Dans ce cas, cette feuille a été insérée lors de la construction de $ F $ par un $ B^{-}(H_{1},\hdots,H_{k}, \tdun{1}) $ où $ H_{1}, \hdots ,H_{k}, \tdun{1} $ est la suite des sous-arbres issus du même sommet de $ F $, le père de la feuille la plus à droite de $ F $. Par construction, $ H_{1}, \hdots , H_{k} \in \mathcal{T} $. Pour obtenir l'arbre $ F \nwarrow G $, il suffit alors de reproduire les opérations faites pour construire $ F $ à un changement près, en remplaçant $ B^{-}(H_{1},\hdots,H_{k}, \tdun{1}) $ par $ B^{-}(H_{1}, \hdots,H_{k}, B^{-}(G_{1}, \hdots , G_{n})) $. Donc $ F \nwarrow G $ appartient aussi à $ \mathcal{T} \subseteq \mathcal{B} $ dans ce cas.
\end{enumerate}

\vspace{0.5cm}

Le lemme qui suit est évident:

\begin{lemma}
Pour toutes forêts $ F, G, H \in \mathcal{B} $,
\begin{eqnarray*}
(F \nwarrow G) \nwarrow H & = & F \nwarrow (G \nwarrow H) ,\\
(FG) \nwarrow H & = & F (G \nwarrow H) .
\end{eqnarray*}
\end{lemma}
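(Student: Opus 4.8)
The plan is to unwind the definition of $\nwarrow$ on both sides and to check that the two ordered forests produced are literally the same, by tracking how the rightmost leaf moves under the construction. By bilinearity it suffices to treat the case where $F$, $G$ and $H$ are forests; when one of them equals $1$ both identities reduce at once to $1 \nwarrow F = F \nwarrow 1 = F$ (for the second identity one takes $G \neq 1$, which is the case relevant in the sequel). So from now on assume $F$, $G$, $H$ nonempty. For a nonempty forest $K$ write $\ell_K$ for its rightmost leaf, $K_{<}$ for the set of vertices of $K$ with index strictly below that of $\ell_K$, and $K_{\geq}$ for the remaining vertices (so $\ell_K \in K_{\geq}$), each carrying the order induced from $K$. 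With this notation the definition of $\nwarrow$ reads: the rooted forest underlying $F \nwarrow G$ is $F$ with the trees of $G$ attached as children of $\ell_F$, and its total order is the concatenation $F_{<} < G < F_{\geq}$.

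For the first identity, note that attaching the trees of $G$ to $\ell_F$ only adds edges into $\ell_F$, so the rightmost leaf of $F \nwarrow G$ is $\ell_G$; since $\ell_G$ lies in the block $G$ of the order $F_{<} < G < F_{\geq}$, we get that $(F \nwarrow G)_{<}$ is $F_{<}$ followed by $G_{<}$ and $(F \nwarrow G)_{\geq}$ is $G_{\geq}$ followed by $F_{\geq}$. Applying the definition of $\nwarrow$ a second time, $(F \nwarrow G) \nwarrow H$ is the forest obtained from $F$ by attaching the trees of $G$ to $\ell_F$ and then the trees of $H$ to $\ell_G$, equipped with the order
$$ F_{<} \;<\; G_{<} \;<\; H \;<\; G_{\geq} \;<\; F_{\geq}. $$
On the other side, $G \nwarrow H$ is $G$ with the trees of $H$ attached to $\ell_G$, ordered by $G_{<} < H < G_{\geq}$; grafting it onto $\ell_F$ produces for $F \nwarrow (G \nwarrow H)$ the same rooted forest, with order $F_{<} < (G \nwarrow H) < F_{\geq}$, that is the order displayed above. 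Hence the two sides coincide.

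For the second identity, the rightmost tree of the concatenation $FG$ is the rightmost tree of $G$, so $\ell_{FG} = \ell_G$ and every index of $F$ is strictly smaller than that of $\ell_G$; thus $(FG)_{<}$ is all of $F$ followed by $G_{<}$, and $(FG)_{\geq} = G_{\geq}$. By the definition of $\nwarrow$, $(FG) \nwarrow H$ therefore has underlying forest equal to the trees of $F$ followed by the trees of $G$ with the trees of $H$ attached to $\ell_G$, and total order $F < G_{<} < H < G_{\geq}$; this is exactly the concatenation of the ordered forest $F$ with the ordered forest $G \nwarrow H$, that is $F(G \nwarrow H)$.

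There is no genuine difficulty here — which is why the lemma is stated as obvious. The only point requiring care is the bookkeeping of the relabellings: one must check that, relative to its rightmost leaf, the index set of $F \nwarrow G$ (resp. of $FG$) splits into exactly the pieces that feed into the second application of $\nwarrow$, so that the recursive relabelling rule composes as claimed; and one uses tacitly that grafting a forest at a leaf only creates edges at that leaf, so that ``graft $G$, then graft $H$ at the new rightmost leaf'' and ``graft the forest $G \nwarrow H$'' yield literally the same rooted forest.
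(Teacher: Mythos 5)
Your proof is correct and is precisely the direct verification the paper leaves implicit by declaring the lemma evident: you unwind the definition of $\nwarrow$, identify the rightmost leaf of $F \nwarrow G$ (resp. of $FG$) with $\ell_G$, and check that both the underlying planar forest and the resulting block ordering $F_< < G_< < H < G_\geq < F_\geq$ (resp. $F < G_< < H < G_\geq$) agree on the two sides of each identity. Your side remark that the second identity requires $G \neq 1$ is well taken: with the unital conventions $1 \nwarrow a = a \nwarrow 1 = a$ one has $(F\cdot 1)\nwarrow H = F\nwarrow H$ while $F\,(1\nwarrow H) = FH$, so the statement should be read, as you do, on the augmentation ideal $(\mathcal{B})_+$, in accordance with the paper's own definition of unital duplicial algebra where the axioms are only imposed on $\overline{A}$.
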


Rappelons la définition suivante (voir par exemple \cite{Foissy4,Loday2}) :

\begin{defi}
Une algèbre dupliciale est un triplet $ (A,\ast,\nwarrow) $, où $ A $ est un espace vectoriel et $ \ast,\nwarrow: A \otimes A \longrightarrow A $, avec les axiomes suivants: pour tout $ x,y,z \in A $,
\begin{equation}\label{E1}\left\{\begin{array}{rcl}
(x \ast y) \ast z & = & x \ast (y \ast z) ,\\
(x \nwarrow y) \nwarrow z & = & x \nwarrow (y \nwarrow z) ,\\
(x \ast y) \nwarrow z & = & x \ast (y \nwarrow z) .
\end{array}\right. \end{equation}
Une algèbre dupliciale unitaire $ A $ est un espace vectoriel $ A = \mathbb{K}1 \oplus \overline{A} $ tel que $ \overline{A} $ est une algèbre dupliciale et où on a étendu les deux opérations $ \ast $ et $ \nwarrow $ comme suit: Pour tout $ a \in A $,
\begin{equation*}\begin{array}{rcl}
1 \ast a = a \ast 1 & = & a, \\
1 \nwarrow a = a \nwarrow 1 & = & a.
\end{array}\end{equation*}
\end{defi}

Ainsi, $ \mathcal{B} $ munit de la concaténation et de $ \nwarrow $ est une algèbre dupliciale unitaire , avec $ \overline{\mathcal{B}} = (\mathcal{B})_{+} $ et $ 1 $ l'arbre vide, de tel sorte que $ \mathcal{B} = \mathbb{K}1 \oplus \overline{\mathcal{B}} $.

\begin{prop}
$ (\mathcal{B}_{l})_{+} $ est l'algèbre dupliciale libre générée par l'élément $ \tdun{$1$} $.
\end{prop}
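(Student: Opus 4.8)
The plan is to produce the canonical comparison morphism from the free duplicial algebra on one generator and to prove it is bijective: surjectivity will come from an explicit generation argument, injectivity from a graded dimension count. Write $\mathcal{T}_{l}\subseteq\mathcal{T}$ for the set of trees built using only $B^{-}$, and let $D$ denote the free (non-unital) duplicial algebra on one generator $\mathbf{a}$, graded by the number of occurrences of $\mathbf{a}$. First I would check that $(\mathcal{B}_{l})_{+}$ is genuinely a duplicial subalgebra of $(\mathcal{B})_{+}$: stability under $\ast$ is built into the definition of $\mathcal{B}_{l}$, and stability under $\nwarrow$ follows by rereading the proof that $\mathcal{B}$ is stable under $\nwarrow$ --- in a tree of $\mathcal{T}_{l}$ every vertex has a strictly larger index than each of its descendants, so the father of the rightmost leaf dominates it, only the second case of that proof can occur, and replacing the relevant $B^{-}(H_{1},\hdots,H_{k},\tdun{1})$ by $B^{-}(H_{1},\hdots,H_{k},B^{-}(G_{1},\hdots,G_{n}))$ produces again a tree of $\mathcal{T}_{l}$. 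Since $\tdun{1}\in(\mathcal{B}_{l})_{+}$, the universal property of $D$ then yields a unique, automatically graded, duplicial morphism $\Phi:D\to(\mathcal{B}_{l})_{+}$ with $\Phi(\mathbf{a})=\tdun{1}$; it remains to show $\Phi$ is an isomorphism.

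For surjectivity, $(\mathcal{B}_{l})_{+}$ has a basis of concatenations $T^{(1)}\hdots T^{(k)}$ of trees of $\mathcal{T}_{l}$, and since concatenation is the operation $\ast$ it suffices to put every $T\in\mathcal{T}_{l}$ in the image of $\Phi$, which I would do by induction on $\left|T\right|$. If $\left|T\right|=1$ then $T=\tdun{1}=\Phi(\mathbf{a})$. If $\left|T\right|\geq2$, deleting the unique vertex of maximal index --- which is the root --- exhibits $T=B^{-}(T_{1},\hdots,T_{m})$, where $T_{1}\hdots T_{m}$ is the planar forest of its subtrees and each $T_{i}\in\mathcal{T}_{l}$ has strictly smaller degree. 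By the remark following the definition of $\nwarrow$, $T=\tdun{1}\nwarrow(T_{1}\hdots T_{m})$; by induction each $T_{i}$, hence the product $T_{1}\ast\hdots\ast T_{m}$, lies in $\mathrm{Im}\,\Phi$, so $T=\Phi(\mathbf{a})\nwarrow(T_{1}\ast\hdots\ast T_{m})$ does too.

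For injectivity I would compare the graded components degree by degree. On one side $\mathcal{B}_{l}$ is isomorphic to $\mathcal{H}_{PR}$ as graded algebras, so $\dim(\mathcal{B}_{l})_{n}$ equals the number of planar forests of degree $n$, namely the Catalan number $c_{n}$; equivalently $F_{(\mathcal{B}_{l})_{+}}(x)=\frac{1-\sqrt{1-4x}}{2x}-1$. On the other side $\dim D_{n}$ is the dimension of the arity-$n$ part of the duplicial operad, which is again $c_{n}$ (see \cite{Loday2}, or derive it from the three defining relations). A graded surjection between graded vector spaces with equal finite dimension in each degree is an isomorphism, so $\Phi$ is bijective and $(\mathcal{B}_{l})_{+}$ is the free duplicial algebra on $\tdun{1}$.

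The crux is this last step. If one prefers not to invoke the dimension of the free duplicial operad, the alternative is to construct the inverse directly: let $\psi$ send the planar forest $T_{1}\hdots T_{m}$ of $\mathcal{T}_{l}$-trees to $\psi(T_{1})\ast\hdots\ast\psi(T_{m})$, with $\psi(\tdun{1})=\mathbf{a}$ and $\psi(B^{-}(S_{1},\hdots,S_{r}))=\mathbf{a}\nwarrow(\psi(S_{1})\ast\hdots\ast\psi(S_{r}))$. Then $\Phi\circ\psi=\mathrm{id}$ by the computation above, so the whole content becomes the surjectivity of $\psi$, i.e. that the relations $(x\ast y)\ast z=x\ast(y\ast z)$, $(x\nwarrow y)\nwarrow z=x\nwarrow(y\nwarrow z)$ and $(x\ast y)\nwarrow z=x\ast(y\nwarrow z)$ suffice to rewrite every duplicial monomial in $\mathbf{a}$ into one of the monomials $\psi(F)$. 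This is a confluence (diamond-lemma) argument, whose only delicate point is controlling the interaction of the mixed relation with the two associativities.
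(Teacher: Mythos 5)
Your argument is correct, but it follows a genuinely different route from the paper's. The paper proves freeness by verifying the universal property directly: given an arbitrary duplicial algebra $A$ and $a\in A$, it defines $\phi:(\mathcal{B}_{l})_{+}\rightarrow A$ by $\phi(\tdun{1})=a$, $\phi(T_{1}\hdots T_{k})=\phi(T_{1})\hdots\phi(T_{k})$ and $\phi(B^{-}(T_{1},\hdots,T_{k}))=a\nwarrow\phi(T_{1}\hdots T_{k})$, then checks by induction on the degree of the rightmost tree of $F$ that $\phi(F\nwarrow G)=\phi(F)\nwarrow\phi(G)$ --- the key step uses the axioms $(x\ast y)\nwarrow z=x\ast(y\nwarrow z)$ and $(x\nwarrow y)\nwarrow z=x\nwarrow(y\nwarrow z)$ --- and finishes with uniqueness. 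That argument is self-contained and never presupposes the existence or the size of the free duplicial algebra. You instead map out of the known free object $D$ and conclude by a graded dimension count: your surjectivity step rests on the same combinatorial decomposition $T=\tdun{1}\nwarrow(T_{1}\hdots T_{m})$ that drives the paper's induction, but injectivity is outsourced to Loday's description of the free duplicial algebra by planar binary trees ($\dim D_{n}=C_{n}$), matched against $\dim(\mathcal{B}_{l})_{n}=C_{n}$ coming from the identification of $\mathcal{B}_{l}$ with $\mathcal{H}_{PR}$. Both proofs are valid: yours is shorter and makes the enumeration transparent, but it depends on the operadic input from \cite{Loday2} (your fallback confluence argument, which would remove that dependence, is indeed the delicate point and is not carried out); the paper's proof costs an explicit induction but delivers the universal property, and hence the classification of duplicial morphisms out of $(\mathcal{B}_{l})_{+}$, with no external input. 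Your preliminary check that $(\mathcal{B}_{l})_{+}$ is stable under $\nwarrow$ (only the second case of the stability argument for $\mathcal{B}$ can occur, since in a tree built only with $B^{-}$ every vertex dominates its descendants) is a point the paper uses implicitly, so making it explicit is a genuine improvement.
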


\begin{proof}
Soit $ A $ une algèbre dupliciale et soit $ a \in A $. Il s'agit de montrer qu'il existe un unique morphisme d'algèbres dupliciales $ \phi : (\mathcal{B}_{l})_{+} \rightarrow A $ tel que $ \phi (\tdun{$1$}) = a $. On définit $ \phi (F) $ pour toute forêt non vide $ F \in (\mathcal{B}_{l})_{+} $ inductivement sur le degré de $ F $:
\begin{equation*}\begin{array}{rcl}
\phi (\tdun{$1$}) & = & a,\\
\phi (T_{1} \hdots T_{k}) & = & \phi (T_{1}) \hdots \phi (T_{k})  ~ {\rm si} ~ k \geq 2,\\
\phi( B^{-}(T_{1}, \hdots ,T_{k})) & = & a \nwarrow \phi(T_{1} \hdots T_{k}) ~ {\rm si} ~ k \geq 1,
\end{array}\end{equation*}
avec $ T_{1}, \hdots ,T_{k} \in \mathcal{T} \cap \mathcal{B}_{l} $. Comme le produit dans $ A $ est associatif, $ \phi $ est bien définie. $ \phi $ s'étend par linéarité en une application $ \phi : (\mathcal{B}_{l})_{+} \rightarrow A $. Montrons que c'est un morphisme d'algèbres dupliciales. Par le second point, $ \phi(FG)= \phi(F) \phi(G) $ pour toutes forêts $ F,G \in (\mathcal{B}_{l})_{+} $. Il reste à prouver que $ \phi (F \nwarrow G) = \phi(F) \nwarrow \phi(G) $ pour toutes forêts $ F,G \in (\mathcal{B}_{l})_{+} $. On pose $ F=F_{1} \hdots F_{k} $, avec $ k \geq 1 $ et $ F_{1}, \hdots ,F_{k} \in \mathcal{T} \cap \mathcal{B}_{l} $. Par récurrence sur le degré $ n $ de $ F_{k} $. Si $ n=1 $, alors $ F_{k} = \tdun{$1$} $, et, en notant $ G = G_{1} \hdots G_{m} $, avec $ m \geq 1 $ et $ G_{1}, \hdots ,G_{m} \in \mathcal{T} \cap \mathcal{B}_{l} $:
\begin{eqnarray*}
\phi (F \nwarrow G) & = & \phi((F_{1} \hdots F_{k-1} \tdun{1}) \nwarrow G)\\
& = & \phi(F_{1} \hdots F_{k-1} B^{-}(G_{1}, \hdots ,G_{m}))\\
& = & \phi(F_{1} \hdots F_{k-1}) \phi(B^{-}(G_{1}, \hdots ,G_{m}))\\
& = & \phi(F_{1} \hdots F_{k-1}) (a \nwarrow \phi (G_{1} \hdots G_{m}))\\
& = & \phi(F_{1} \hdots F_{k-1}) (\phi(F_{k}) \nwarrow \phi(G))\\
& = & \phi (F) \nwarrow \phi (G) .
\end{eqnarray*}
Soit $ n \geq 2 $ et supposons le résultat vérifié si le degré de $ F_{k} $ est strictement inferieur à $ n $. $ F_{k} \in \mathcal{T} \cap \mathcal{B}_{l} $, donc $ F_{k} $ est de la forme $ F_{k} = B^{-}(H_{1}, \hdots , H_{l}) $ avec $ l \geq 1 $ et $ H_{1}, \hdots , H_{l} \in \mathcal{T} \cap \mathcal{B}_{l} $. Alors, en utilisant l'hypothèse de récurrence sur la forêt $ H = H_{1} \hdots H_{l} $ :
\begin{eqnarray*}
\phi (F \nwarrow G) & = & \phi (F_{1} \hdots F_{k-1} B^{-}(H_{1}, \hdots , H_{l}) \nwarrow G)\\
& = & \phi (F_{1} \hdots F_{k-1}) \phi(B^{-}(H_{1}, \hdots , H_{l} \nwarrow G)) \\
& = & \phi (F_{1} \hdots F_{k-1}) (a \nwarrow \phi(H \nwarrow G) )\\
& = & \phi (F_{1}) \hdots \phi(F_{k-1}) (a \nwarrow (\phi(H) \nwarrow \phi(G)))\\
& = & \phi (F_{1}) \hdots \phi(F_{k-1}) ((a \nwarrow \phi(H)) \nwarrow \phi(G))\\
& = & \phi (F_{1}) \hdots \phi(F_{k-1}) (\phi(F_{k}) \nwarrow \phi(G))\\
& = & (\phi (F_{1}) \hdots \phi(F_{k-1}) \phi(F_{k})) \nwarrow \phi(G)\\
& = & \phi (F) \nwarrow \phi(G)).
\end{eqnarray*}
Ainsi $ \phi $ est un morphisme d'algèbres dupliciales.\\

Soit $ \phi': (\mathcal{B}_{l})_{+} \rightarrow A $ un deuxième morphisme d'algèbres dupliciales tel que $ \phi'(\tdun{$1$}) = a $. Pour tout arbre non vide $ T_{1}, \hdots ,T_{k} \in \mathcal{T} \cap \mathcal{B}_{l} $, $ \phi'(T_{1} \hdots T_{k}) = \phi'(T_{1}) \hdots \phi'(T_{k}) $ et $ \phi'( B^{-}(T_{1}, \hdots ,T_{k})) = \phi'(\tdun{$1$} \nwarrow (T_{1} \hdots T_{k})) = a \nwarrow \phi'(T_{1} \hdots T_{k}) $. Donc $ \phi = \phi' $.
\end{proof}

\begin{defi}
Pour toute forêt non vide $ F \in (\mathcal{B})_{+} $, notons $ r_{F} $ la feuille la plus à droite de $ F $. On pose:
\begin{eqnarray*}
\tdelta_{\prec} (F) & = & \sum_{\boldsymbol{v} \mmodels V(F) {\rm ~ et ~} r_{F} \in Lea_{\boldsymbol{v}} (F)} Lea_{\boldsymbol{v}} (F) \otimes Roo_{\boldsymbol{v}} (F),\\
\tdelta_{\succ} (F) & = & \sum_{\boldsymbol{v} \mmodels V(F) {\rm ~ et ~} r_{F} \in Roo_{\boldsymbol{v}} (F)} Lea_{\boldsymbol{v}} (F) \otimes Roo_{\boldsymbol{v}} (F).
\end{eqnarray*}
\end{defi}

Remarquons que $ \tdelta_{\prec} + \tdelta_{\succ} = \tdelta $.

\begin{lemma}
Pour tout $ F \in (\mathcal{B})_{+} $,
\begin{equation} \label{E2} \left\{\begin{array}{rcl}
(\tdelta_{\prec} \otimes Id) \circ \tdelta_{\prec} (F) & = & (Id \otimes \tdelta) \circ \tdelta_{\prec} (F),\\
(\tdelta_{\succ} \otimes Id) \circ \tdelta_{\prec} (F) & = & (Id \otimes \tdelta_{\succ}) \circ \tdelta_{\prec} (F),\\
(\tdelta \otimes Id) \circ \tdelta_{\succ} (F) & = & (Id \otimes \tdelta_{\succ}) \circ \tdelta_{\succ} (F).
\end{array}\right. \end{equation}
En d'autres termes, $ (\mathcal{B})_{+} $ est une coalgèbre dendriforme.
\end{lemma}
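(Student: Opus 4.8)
Le plan est de ramener les trois identit\'es de~(\ref{E2}) \`a la coassociativit\'e de $\tdelta$ et \`a un unique fait combinatoire sur le comportement de la feuille la plus \`a droite vis-\`a-vis des coupes admissibles. Rappelons que $\mathcal{H}_{o}$ est une big\`ebre (voir~\cite{Foissy1}), donc $\Delta$, et par suite $\tdelta$, est coassociatif : $(\tdelta \otimes Id)\circ \tdelta = (Id \otimes \tdelta)\circ \tdelta$. De fa\c{c}on plus pr\'ecise, les deux coproduits it\'er\'es \'enum\`erent le m\^eme ensemble de "doubles coupes" : un terme de $(\tdelta \otimes Id)\circ\tdelta(F)$ provient d'une coupe $\boldsymbol{v} \mmodels V(F)$ suivie d'une coupe $\boldsymbol{w} \mmodels V(Lea_{\boldsymbol{v}}(F))$, un terme de $(Id \otimes \tdelta)\circ\tdelta(F)$ d'une coupe $\boldsymbol{v}' \mmodels V(F)$ suivie d'une coupe $\boldsymbol{w}' \mmodels V(Roo_{\boldsymbol{v}'}(F))$, et dans les deux cas le tenseur obtenu est de la forme $X \otimes Y \otimes Z$ o\`u $Z$, $X$ et $Y$ sont respectivement la couche des racines, la couche des feuilles et la couche interm\'ediaire d'une d\'ecomposition de $F$ en trois morceaux imbriqu\'es ; la bijection entre ces deux familles de doubles coupes respecte $X$, $Y$, $Z$ composante par composante (c'est pr\'ecis\'ement la coassociativit\'e de $\tdelta$). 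Je commencerais par mettre en place cette description.

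Le point cl\'e est l'observation suivante : \emph{pour toute for\^et $F \in (\mathcal{B})_{+}$ et toute coupe admissible $\boldsymbol{v} \mmodels V(F)$, la feuille la plus \`a droite $r_{F}$ de $F$ est aussi la feuille la plus \`a droite de celle des deux for\^ets $Lea_{\boldsymbol{v}}(F)$, $Roo_{\boldsymbol{v}}(F)$ qui la contient}. Pour le d\'emontrer, on suit la branche la plus \`a droite de $F$, c'est-\`a-dire le chemin joignant la racine de l'arbre le plus \`a droite de $F$ au sommet $r_{F}$. Si $r_{F} \in Roo_{\boldsymbol{v}}(F)$, aucun sommet de cette branche ne peut \^etre dans $Lea_{\boldsymbol{v}}(F)$ (sinon $r_{F}$ aussi), donc la branche survit enti\`erement dans $Roo_{\boldsymbol{v}}(F)$ et y reste la branche la plus \`a droite. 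Si $r_{F} \in Lea_{\boldsymbol{v}}(F)$, l'unique $v^{\ast} \in \boldsymbol{v}$ tel que $r_{F} \twoheadrightarrow v^{\ast}$ est sur cette branche ; l'arbre de $Lea_{\boldsymbol{v}}(F)$ de racine $v^{\ast}$ est alors le sous-arbre entier de $F$ issu de $v^{\ast}$, c'est l'arbre le plus \`a droite de $Lea_{\boldsymbol{v}}(F)$ puisque $v^{\ast}$ est sur la branche la plus \`a droite de $F$ et que $\boldsymbol{v}$ est totalement d\'econnect\'e, et $r_{F}$ en est la feuille la plus \`a droite. C'est le seul point de la preuve demandant un peu de soin.

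On conclut en combinant ces deux ingr\'edients. Appliquer $\tdelta_{\prec}$ (resp.\ $\tdelta_{\succ}$) \`a $F$ revient \`a ne garder que les coupes pour lesquelles $r_{F}$ est dans le premier (resp.\ le second) facteur, et, d'apr\`es l'observation cl\'e, lorsqu'on applique ensuite $\tdelta_{\prec}$ ou $\tdelta_{\succ}$ \`a ce facteur, la s\'election se fait selon que $r_{F}$ tombe dans sa propre couche des feuilles ou dans sa propre couche des racines. En traduisant chaque compos\'ee dans la description \`a trois couches, on obtient que les deux membres de la premi\`ere identit\'e valent $\sum X \otimes Y \otimes Z$, somme sur les d\'ecompositions \`a trois couches v\'erifiant $r_{F} \in X$ ; que les deux membres de la deuxi\`eme valent la m\^eme somme restreinte \`a $r_{F} \in Y$ ; et que les deux membres de la troisi\`eme valent cette somme restreinte \`a $r_{F} \in Z$. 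Comme la bijection sous-jacente \`a la coassociativit\'e est l'identit\'e sur $\overline{\mathcal{B}}^{\otimes 3}$, la condition "$r_{F}$ appartient \`a la $i$-\`eme couche" se transporte sans changement, et les trois \'egalit\'es en d\'ecoulent. L'unique difficult\'e r\'eelle est l'observation cl\'e ci-dessus ; d\'evelopper les compos\'ees et y lire la contrainte portant sur $r_{F}$ rel\`eve du calcul de routine.
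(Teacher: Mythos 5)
Your strategy --- coassociativit\'e de $\tdelta$ plus l'observation que $r_F$ reste la feuille la plus \`a droite de celle des deux for\^ets $Lea_{\boldsymbol{v}}(F)$, $Roo_{\boldsymbol{v}}(F)$ qui la contient --- est exactement celle du texte, et votre observation cl\'e est correctement \'enonc\'ee et prouv\'ee. La r\'eduction fonctionne pour la premi\`ere et la troisi\`eme identit\'e, et pour le membre de gauche de la deuxi\`eme. Mais le membre de droite de la deuxi\`eme identit\'e ne se r\'eduit pas \`a $\sum_{r_F\in F^{(2)}}$ comme vous l'affirmez. En d\'eveloppant $(Id\otimes\tdelta_\succ)\circ\tdelta_\prec(F)$ : le $\tdelta_\prec$ int\'erieur impose $r_F\in Lea_{\boldsymbol{v}}(F)=F^{(1)}$, puis le $\tdelta_\succ$ ext\'erieur s'applique \`a $Roo_{\boldsymbol{v}}(F)=F^{(2)}F^{(3)}$, une for\^et qui ne contient pas $r_F$ ; votre observation cl\'e ne dit donc rien sur sa feuille la plus \`a droite, et la s\'election est gouvern\'ee par $r_{F^{(2)}F^{(3)}}$ et non par $r_F$. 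On obtient la somme sur les triples v\'erifiant $r_F\in F^{(1)}$ et $r_{F^{(2)}F^{(3)}}\in F^{(3)}$, qui est une autre somme. Concr\`etement, pour $F=\tdtroisun{1}{3}{2}$ on calcule $(\tdelta_\succ\otimes Id)\circ\tdelta_\prec(F)=\tdun{1}\otimes\tdun{1}\otimes\tdun{1}$ alors que $(Id\otimes\tdelta_\succ)\circ\tdelta_\prec(F)=0$.

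En fait la deuxi\`eme identit\'e telle qu'imprim\'ee est fausse ; ce que votre m\'ethode d\'emontre r\'eellement (et ce qu'exige l'axiome codendriforme dual de $(x\succ y)\prec z=x\succ(y\prec z)$) est $(\tdelta_\succ\otimes Id)\circ\tdelta_\prec=(Id\otimes\tdelta_\prec)\circ\tdelta_\succ$ : dans ce membre de droite, le $\tdelta_\succ$ int\'erieur place $r_F$ dans $Roo_{\boldsymbol{v}}(F)$, votre observation s'y applique donc, et le $\tdelta_\prec$ ext\'erieur s\'electionne bien $r_F\in F^{(2)}$, en accord avec le membre de gauche. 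La preuve du texte affirme la m\^eme r\'eduction incorrecte pour $(Id\otimes\tdelta_\succ)\circ\tdelta_\prec$ ; vous avez donc h\'erit\'e d'une coquille de l'\'enonc\'e plut\^ot qu'introduit une erreur nouvelle --- mais le \og calcul de routine \fg{} que vous diff\'erez est pr\'ecis\'ement l'endroit o\`u l'argument, tel qu'\'ecrit, ne passe pas.
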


\begin{proof}
Soit $ F $ une forêt non vide de $ \mathcal{B} $. Par coassociativité de $ \tdelta $, on peut poser:
$$ (\tdelta \otimes Id) \circ \tdelta (F) = (Id \otimes \tdelta) \circ \tdelta (F) = \sum F^{(1)} \otimes F^{(2)} \otimes F^{(3)} ,$$
où $ F^{(1)} , F^{(2)} , F^{(3)} $ sont des sous-forêts non vides de $ F $. Alors:
\begin{eqnarray*}
(\tdelta_{\prec} \otimes Id) \circ \tdelta_{\prec} (F) = (Id \otimes \tdelta) \circ \tdelta_{\prec} (F) =  \sum_{r_{F} \in F^{(1)}} F^{(1)} \otimes F^{(2)} \otimes F^{(3)},\\
(\tdelta_{\succ} \otimes Id) \circ \tdelta_{\prec} (F) = (Id \otimes \tdelta_{\succ}) \circ \tdelta_{\prec} (F) = \sum_{r_{F} \in F^{(2)}} F^{(1)} \otimes F^{(2)} \otimes F^{(3)},\\
(\tdelta \otimes Id) \circ \tdelta_{\succ} (F) = (Id \otimes \tdelta_{\succ}) \circ \tdelta_{\succ} (F) = \sum_{r_{F} \in F^{(3)}} F^{(1)} \otimes F^{(2)} \otimes F^{(3)}.
\end{eqnarray*}
\end{proof}

{\bf Notations.} {Soit $ (A, \tdelta_{\prec} , \tdelta_{\succ} )$ une coalgèbre dendriforme.
\begin{enumerate}
\item On notera $ Prim_{tot}(A) = Ker(\tdelta_{\succ}) \cap Ker(\tdelta_{\prec}) $.
\item Nous utiliserons les notations de Sweedler suivantes: pour tout $ a \in A $, $ \tdelta (a) = a' \otimes a'' $, $ \tdelta_{\prec} (a) = a'_{\prec} \otimes a''_{\prec} $ et $ \tdelta_{\succ} (a) = a'_{\succ} \otimes a''_{\succ} $.
\end{enumerate}
}

\begin{prop} \label{3}
\begin{enumerate}
\item Soit $ x,y \in (\mathcal{B})_{+} $. Alors:
\begin{equation}\label{E3}\left\{\begin{array}{rcl}
\tdelta_{\prec} (xy) & = & y \otimes x + x'y \otimes x'' + xy'_{\prec} \otimes y''_{\prec} + y'_{\prec} \otimes xy''_{\prec} + x'y'_{\prec} \otimes x''y''_{\prec},\\
\tdelta_{\succ} (xy) & = & x \otimes y + x' \otimes x''y + xy'_{\succ} \otimes y''_{\succ} + y'_{\succ} \otimes xy''_{\succ} + x'y'_{\succ} \otimes x''y''_{\succ}.
\end{array}\right. \end{equation}
En d'autres termes, $ (\mathcal{B})_{+} $ est une bialgèbre codendriforme.
\item Soit $ x,y \in (\mathcal{B})_{+} $. Alors:
\begin{equation}\label{E4}\left\{\begin{array}{rcl}
\tdelta_{\prec} (x \nwarrow y) & = & y \otimes x + y'_{\prec} \otimes x \nwarrow y''_{\prec} + x'_{\prec} \nwarrow y \otimes x''_{\prec} + x'_{\succ} y \otimes x_{\succ}\\
& \hspace{0.3cm} & + x'_{\succ}y'_{\prec} \otimes x''_{\succ} \nwarrow y''_{\prec},\\
\tdelta_{\succ} (x \nwarrow y) & = & y'_{\succ} \otimes x \nwarrow y''_{\succ} + x'_{\succ} \otimes x''_{\succ} \nwarrow y + x'_{\succ}y'_{\succ} \otimes x''_{\succ} \nwarrow y''_{\succ}.
\end{array}\right. \end{equation}
\end{enumerate}
\end{prop}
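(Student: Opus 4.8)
The plan is to handle the two points by a direct examination of admissible cuts, using for the first the multiplicativity of $ \Delta $ and for the second a combinatorial description of the cuts of $ x\nwarrow y $.

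For the first point I would start from $ \Delta(xy) = \Delta(x)\Delta(y) $ and substitute $ \Delta = (\cdot\otimes 1) + (1\otimes\cdot) + \tdelta $. Expanding the product of the two three-term expressions and removing $ xy\otimes 1 $ and $ 1\otimes xy $ exhibits $ \tdelta(xy) $ as the sum of the seven terms $ x\otimes y $, $ y\otimes x $, $ x'y\otimes x'' $, $ x'\otimes x''y $, $ xy'\otimes y'' $, $ y'\otimes xy'' $, $ x'y'\otimes x''y'' $, where $ \tdelta(x) = x'\otimes x'' $ and $ \tdelta(y) = y'\otimes y'' $. Since $ x $ lies to the left of $ y $ in the concatenation, the rightmost leaf of $ xy $ is $ r_{xy} = r_y $; one then sends each term to $ \tdelta_\prec(xy) $ or $ \tdelta_\succ(xy) $ according to whether the tensor factor containing $ r_y $ is the left (leaves) or the right (roots) one, splitting the three terms built from $ \tdelta(y) = y'\otimes y'' $ further into their $ \tdelta_\prec(y) $ and $ \tdelta_\succ(y) $ parts. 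Regrouping yields (\ref{E3}) immediately; this point is routine.

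For the second point the heart of the matter is to describe the admissible cuts of the forest $ F = x\nwarrow y $, in which the trees of $ y $ are grafted as the children of the rightmost leaf $ r_x $ of $ x $. One has $ V(F) = V(x)\sqcup V(y) $, and any $ \boldsymbol{v}\models V(F) $ splits as $ \boldsymbol{v}_1 = \boldsymbol{v}\cap V(x) $, $ \boldsymbol{v}_2 = \boldsymbol{v}\cap V(y) $, which are admissible cuts of $ x $ and of $ y $. Conversely a pair $ (\boldsymbol{v}_1,\boldsymbol{v}_2) $ of admissible cuts of $ x $ and $ y $ yields an admissible cut of $ F $ precisely when $ \boldsymbol{v}_2 = \emptyset $ or $ r_x\in Roo_{\boldsymbol{v}_1}(x) $: the only way a vertex $ a $ of $ \boldsymbol{v}_1 $ and a vertex $ b $ of $ \boldsymbol{v}_2 $ can be comparable in $ F $ is via the oriented path $ b\twoheadrightarrow r_x\twoheadrightarrow a $, i.e.\ when some $ a\in\boldsymbol{v}_1 $ is an ancestor of $ r_x $. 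The geometric fact to establish is that $ r_x $ remains the rightmost leaf of whichever of $ Lea_{\boldsymbol{v}_1}(x) $ or $ Roo_{\boldsymbol{v}_1}(x) $ it belongs to: since $ r_x $ is the globally rightmost leaf of $ x $ it lies on the ``rightmost child'' path issued from the root, and that whole path survives in any cut that does not touch one of its vertices. Granting this, one reads off the cut of $ F $: if $ r_x\in Lea_{\boldsymbol{v}_1}(x) $ (which forces $ \boldsymbol{v}_2=\emptyset $), then $ Lea_{\boldsymbol{v}}(F) = Lea_{\boldsymbol{v}_1}(x)\nwarrow y $ and $ Roo_{\boldsymbol{v}}(F) = Roo_{\boldsymbol{v}_1}(x) $; if $ r_x\in Roo_{\boldsymbol{v}_1}(x) $, then $ Lea_{\boldsymbol{v}}(F) = Lea_{\boldsymbol{v}_1}(x)\,Lea_{\boldsymbol{v}_2}(y) $ — the branches cut out of $ y $ landing to the right of those cut out of $ x $, again because $ r_x $ is rightmost in $ x $ — and $ Roo_{\boldsymbol{v}}(F) = Roo_{\boldsymbol{v}_1}(x)\nwarrow Roo_{\boldsymbol{v}_2}(y) $.

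To finish I would sum over these cuts. Since $ r_F = r_y $, it lies in $ Lea_{\boldsymbol{v}}(F) $ exactly when $ r_x\in Lea_{\boldsymbol{v}_1}(x) $, or when $ r_x\in Roo_{\boldsymbol{v}_1}(x) $ and $ r_y\in Lea_{\boldsymbol{v}_2}(y) $. Sorting the cuts $ \boldsymbol{v}_1 $ of $ x $ by the position of $ r_x $ (empty cut — contributing $ 1\otimes x $; the proper cuts of $ \tdelta_\prec(x) $ and of $ \tdelta_\succ(x) $; the total cut) and the cuts $ \boldsymbol{v}_2 $ of $ y $ by the position of $ r_y $ (empty cut; the proper cuts of $ \tdelta_\prec(y) $ and of $ \tdelta_\succ(y) $; the total cut — contributing $ y\otimes 1 $), and discarding the global empty and global total cuts of $ F $ (the latter being exactly $ \boldsymbol{v}_2=\emptyset $ together with $ \boldsymbol{v}_1 $ the total cut of $ x $), a methodical collection of the surviving products gives the five terms of $ \tdelta_\prec(x\nwarrow y) $ and the three terms of $ \tdelta_\succ(x\nwarrow y) $ of (\ref{E4}). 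I expect the one real obstacle to be the geometric lemma about $ r_x $ and the induced identities for $ Lea_{\boldsymbol{v}}(F) $ and $ Roo_{\boldsymbol{v}}(F) $ — especially the claim that the branches detached from $ y $ always end up to the right of those detached from $ x $; once these are in place the rest is bookkeeping. As a sanity check, the case $ x=\tdun{1} $, where $ x\nwarrow y = B^-(G_1,\dots,G_n) $ and (\ref{E4}) degenerates, can be matched against the coproduct formula for $ B^- $ stated just before Lemma \ref{comod}.
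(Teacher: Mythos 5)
Your proposal is correct and follows essentially the same route as the paper: a direct case analysis of the admissible cuts of $xy$ and of $x\nwarrow y$ via their restrictions to $x$ and to $y$, sorted according to the position of the rightmost leaf. The only cosmetic differences are that for (\ref{E3}) you obtain the seven terms of $\tdelta(xy)$ from the multiplicativity of $\Delta$ rather than by re-enumerating the five types of cuts, and that you are more explicit than the paper about the ordering fact guaranteeing that the detached branches combine into $x'_{\succ}y'_{\prec}$ in that order.
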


\begin{proof}
Il suffit de prouver ces formules pour $x=F$, $y=G$ des forêts non vides $ \in \mathcal{T} $. Commençons par prouver les premières formules, à savoir $\tdelta_\prec(FG)$ et $\tdelta_\succ(FG)$.
Pour toute coupe admissible $\boldsymbol{v} \mmodels V(FG)$, soit $\boldsymbol{v}'$ la restriction de $\boldsymbol{v}$ à $F$ et $\boldsymbol{v}''$ la restriction de $\boldsymbol{v}$ à $G$. 
Alors $\boldsymbol{v}'\models V(F)$ et $\boldsymbol{v}'' \models V(G)$. Par ailleurs, $\boldsymbol{v}'$ et $\boldsymbol{v}''$ ne sont pas simultanément totales, ni simultanément vides. \\

Pour $\tdelta_\prec(FG)$: soit $\boldsymbol{v} \mmodels V(FG)$, telle que $r_{FG}=r_G$ appartient à $Lea_{\boldsymbol{v}}(FG)$.
Comme $r_G \in Lea_{\boldsymbol{v}''}(G)$, $\boldsymbol{v}''$ est non vide. On a alors cinq possibilités pour $\boldsymbol{v}$:
\begin{itemize}
\item $\boldsymbol{v}'$ est vide et $\boldsymbol{v}''$ est totale: cela donne le terme $G\otimes F$. 
\item $\boldsymbol{v}'$ est non vide et $\boldsymbol{v}''$ est totale: alors $\boldsymbol{v}' \mmodels V(F)$, et cela donne le terme $F'G\otimes F''$.
\item $\boldsymbol{v}'$ est vide et $\boldsymbol{v}''$ est non totale: comme $r_G \in Lea_{\boldsymbol{v}''}(G)$, cela donne le terme $G'_\prec \otimes F G''_\prec$.
\item $\boldsymbol{v}'$ est totale et $\boldsymbol{v}''$ est non totale: comme $r_G \in Lea_{\boldsymbol{v}''}(G)$, cela donne le terme $FG'_\prec \otimes G''_\prec$.
\item $\boldsymbol{v}' \mmodels V(F)$ et $\boldsymbol{v}''$ sont non totales: comme $r_G \in Lea_{\boldsymbol{v}''}(G)$, cela donne le terme $F'G'_\prec \otimes F''G''_\prec$.
\end{itemize}

\hspace{1cm}

Intérressons nous à présent à $\tdelta_\succ(FG)$. Soit $\boldsymbol{v} \mmodels V(FG)$, telle que $r_{FG}=r_G$ appartient à $Roo_{\boldsymbol{v}}(FG)$.
Comme $r_G \in Roo_{\boldsymbol{v}''}(G)$, $\boldsymbol{v}''$ n'est pas totale. Il y a cinq possibilités pour $\boldsymbol{v}$:
 \begin{itemize}
\item $\boldsymbol{v}'$ est totale et $\boldsymbol{v}''$ est vide: cela donne le terme $F\otimes G$. 
\item $\boldsymbol{v}'$ est non totale et $\boldsymbol{v}''$ est vide: alors $\boldsymbol{v}' \mmodels V(F)$, et cela donne le terme $F'\otimes F''G$.
\item $\boldsymbol{v}'$ est totale et $\boldsymbol{v}''$ est non vide: comme $r_G \in Roo_{\boldsymbol{v}''}(G)$, cela donne le terme $FG'_\succ\otimes G''_\succ$.
\item $\boldsymbol{v}'$ est vide et $\boldsymbol{v}''$ est non totale: comme $r_G \in Roo_{\boldsymbol{v}''}(G)$, cela donne le terme $G'_\succ \otimes FG''_\succ$.
\item $\boldsymbol{v}' \mmodels V(F)$ et $\boldsymbol{v}''$ sont non totales: comme $r_G \in Roo_{\boldsymbol{v}''}(G)$, cela donne le terme $F'G'_\succ \otimes F''G''_\succ$.
\end{itemize}

\hspace{1cm}

Pour une coupe admissible $\boldsymbol{v}\mmodels V(F\nwarrow G)$, soit $\boldsymbol{v}'$ la restriction à $\boldsymbol{v}$ de $F$ et soit $\boldsymbol{v}''$ l'unique coupe admissible de $G$ telle que $Lea_{\boldsymbol{v}''}(G)$ est la sous-forêt de $Lea_{\boldsymbol{v}} (F\nwarrow G)$ formée par les sommets qui appartiennent à $V(G)$. Remarquons que, $\boldsymbol{v}$ n'étant pas totale, $\boldsymbol{v}'$ n'est pas totale.\\

Calculons $\tdelta_\prec(F\nwarrow G)$.  Soit $\boldsymbol{v} \mmodels V(F\nwarrow G)$, telle que $r_{F\nwarrow G}=r_G$ appartient à $Lea_{\boldsymbol{v}}(F\nwarrow G)$. Comme $r_G \in Lea_{\boldsymbol{v}''}(G)$, $\boldsymbol{v}''$ est non vide. Il y a quatre possibilités pour $\boldsymbol{v}$:
\begin{itemize}
\item $\boldsymbol{v}'$ est vide et $\boldsymbol{v}''$ est totale: cela donne le terme $G \otimes F$.
\item $\boldsymbol{v}'$ est vide et $\boldsymbol{v}''$ est non totale: alors $\boldsymbol{v}'' \mmodels V(G)$ et $r_G \in Lea_{\boldsymbol{v}''}(G)$, et cela donne le terme $G'_\prec \otimes F \nwarrow G''_\prec$.
\item $\boldsymbol{v}'$ est non vide et $\boldsymbol{v}''$ est totale: on a alors deux sous-cas:
\begin{itemize}
\item $Lea_{\boldsymbol{v}'}(F)$ contient $r_F$: cela donne le terme $F'_\prec \nwarrow G \otimes F''_\prec$.
\item $Roo_{\boldsymbol{v}'}(F)$ contient $r_F$: cela donne le terme $F'_\succ G \otimes F''_\succ$.
\end{itemize}
\item $\boldsymbol{v}'$ est non vide et $\boldsymbol{v}''$ est non totale: alors $r_F$ n'appartient pas à $Lea_{\boldsymbol{v}'}(F)$, $r_G$ appartient à $Lea_{\boldsymbol{v}''}(G)$, et cela donne le terme $F'_\succ G'_\prec \otimes F''_\succ \nwarrow G''_\prec$.
\end{itemize}

\hspace{1cm}

Pour terminer, il reste à calculer $\tdelta_\succ(F\nwarrow G)$.  Soit $\boldsymbol{v} \mmodels V(F\nwarrow G)$, telle que $r_{F\nwarrow G}=r_G$ appartient à $Roo_{\boldsymbol{v}}(F\nwarrow G) $. Comme $r_G \in Roo_{\boldsymbol{v}''}(G)$, $\boldsymbol{v}''$ n'est pas totale. Donc $\boldsymbol{v}'$ ne contient pas $r_F$. Il y a trois possibilités pour $\boldsymbol{v}$:
\begin{itemize}
\item $\boldsymbol{v}'$ est vide: alors $\boldsymbol{v}'' \mmodels V(G)$ et $Roo_{\boldsymbol{v}''} (G)$ contient $r_G$, et cela donne le terme $G'_\succ \otimes F\nwarrow G''_\succ$.
\item $\boldsymbol{v}'$ est non vide et $\boldsymbol{v}''$ est vide: alors $r_F \in Roo_{\boldsymbol{v}'}(F)$ et on obtient le terme $F'_\succ \otimes F''_\succ \nwarrow G$.
\item $\boldsymbol{v}'$ est non vide et $\boldsymbol{v}''$ est non vide:  alors $r_F \in Roo_{\boldsymbol{v}'}(F)$, $r_G \in Roo_{\boldsymbol{v}''}(G)$ et on obtient le terme $F'_\succ G'_\succ \otimes F''_\succ \nwarrow G'_\succ$.
\end{itemize}
\end{proof}

Rappelons la définition suivante suggèrée par les résultats précédents:

\begin{defi} Une bialgèbre dupliciale dendriforme est une famille $(A,\ast,\nwarrow,\tdelta_\prec,\tdelta_\succ)$, où $A$ est un espace vectoriel,
$\ast,\nwarrow:A\otimes A\longrightarrow A$ et $\tdelta_\prec,\tdelta_\succ:A \longrightarrow A \otimes A$, avec les propriétés suivantes:
\begin{enumerate}
\item $(A,\ast,\nwarrow)$ est une algèbre dupliciale (axiomes (\ref{E1})).
\item $(A,\tdelta_\prec,\tdelta_\succ)$ est une coalgèbre dendriforme (axiomes (\ref{E2})).
\item Les compatibilités de la proposition \ref{3} sont satisfaites (axiomes (\ref{E3}) et (\ref{E4})).
\end{enumerate} \end{defi}

Nous avons besoin du théorème de rigidité suivant, démontré dans \cite{Foissy4},

\begin{theo}
Soit $A$ une bialgèbre dupliciale dendriforme. On suppose que $A$ est graduée et connexe, c'est-à-dire que $A_0=(0)$.
Soit $(p_\D)_{d\in \D}$ une base de $Prim_{tot}(A)$ formée par des éléments homogènes, indexés par un ensemble gradué $\D$. Il existe un unique isomorphisme de bialgèbres dupliciales dendriformes graduées:
$$\phi:\left\{\begin{array}{rcl}
(\mathcal{H}_{PR}^\D)_+&\longrightarrow&A\\
\tdun{$d$},\:d\in \D&\longrightarrow&p_d.
\end{array}\right.$$
\end{theo}

On en déduit alors le résultat suivant:

\begin{theo}
Il existe un ensemble gradué $ \mathcal{D} $ tel que $ (\mathcal{B})_{+} $ est isomorphe à $ (\mathcal{H}_{PR}^\D)_+ $ comme bialgèbres dupliciales dendriformes graduées.
\end{theo}

La série formelle de $ \mathcal{D} $ est donnée par:
$$ f_{\D}(x)=\frac{f_{\mathcal{B}}(x)-1}{f_{\mathcal{B}}(x)^2} .$$

Cela donne:

$$\begin{array}{c|c|c|c|c|c|c|c|c}
k&1&2&3&4&5&6&7&8\\
\hline \left|\mathcal{D} \right| &1&1&2&6&22&90&394&1806
\end{array}$$

On retrouve les nombres de Schroeder, correspondants à la séquence A006318 de \cite{Sloane}.\\

{\bf Remarque.} {Comme corollaire de ce théorème, $ \mathcal{B} $ est libre (ce qu'on savait déjà, librement engendré par $ \mathcal{T} $), colibre et auto-duale.}

\subsection{Structure d'algèbre bigreffe de $ \mathcal{B} $}

\subsubsection{Algèbre de greffes à gauche}

Définissons à présent une nouvelle loi de composition interne sur $ (\mathcal{B})_{+} $ qui va permettre de munir $ \mathcal{B} $ d'une structure d'algèbre de greffes à gauche unitaire.\\

\'Etant donnés deux arbres non vides $ T $ et $ G $ de $ \mathcal{T} $, on définit un arbre $ T \succ G $ en greffant par la gauche $ T $ sur la racine de $ G $ et en indexant les sommets comme suit: on conserve l'indexation des sommets de $ T $ et on numérote ensuite les sommets de $ G $ dans leurs ordres de départ mais en décalant leurs indices par le nombre de sommets de $ T $. Considérons maintenant une forêt $ T_{1} \hdots T_{n} $ et un arbre $ G $, avec $ n \geq 1 $ et $ T_{1}, \hdots , T_{n}, G \in \mathcal{T} $ (tous non vides), on définit l'arbre $ (T_{1} \hdots T_{n}) \succ G $ en le posant égal à $ T_{1} \succ ( T_{2} \succ ( \hdots (T_{n} \succ G) \hdots )) $. \'Etant données deux forêts non vides $ T_{1} \hdots T_{n} $ et $ G_{1} \hdots G_{m} $, avec $ n,m \geq 1 $ et $ T_{1}, \hdots ,T_{n} ,G_{1}, \hdots, G_{m} \in \mathcal{T} $, on pose $ (T_{1} \hdots T_{n}) \succ (G_{1} \hdots G_{m}) = ((T_{1} \hdots T_{n}) \succ G_{1} ) G_{2} \hdots G_{m} $. En étendant par linéarité $ \succ $, on définit ainsi une nouvelle opération sur $ (\mathcal{B})_{+} $. On utilisera la convention suivante: si $ T \in (\mathcal{B})_{+} $, $ 1 \succ T = T $ et $ T \succ 1 = 0 $.\\

{\bf Exemples.} Illustrons ci-dessous l'opération $ \succ $:
$$\begin{array}{|rclcl|rclcl|rclcl|}
\hline \tdun{1} &\succ& \tddeux{1}{2} &=& \tdtroisun{2}{3}{1} &\tdun{1}\tdun{2} &\succ& \tdun{1} &=& \tdtroisun{3}{2}{1} &
\tdun{1}&\succ&\tdun{1}\tddeux{2}{3}&=&\tddeux{2}{1} \tddeux{3}{4}\\
\tddeux{2}{1} &\succ& \tddeux{1}{2} \tdun{3} &=&\tdquatredeux{3}{4}{2}{1} \tdun{5} & \tdun{1} \tdun{2} \tdun{3} &\succ& \tdun{1} &=& \tdquatreun{4}{3}{2}{1} &
\tdun{1} &\succ& \tdtroisdeux{1}{3}{2} &=& \tdquatretrois{2}{4}{3}{1} \\
\tdtroisun{2}{3}{1} &\succ& \tdun{1}&=& \tdquatrequatre{4}{2}{3}{1} & \tdun{1} \tddeux{3}{2} &\succ& \tdun{1} &=& \tdquatretrois{4}{3}{2}{1} &
\tdun{1}\tdun{2}&\succ& \tdtroisdeux{1}{3}{2} &=&\tdcinqquatre{3}{5}{4}{2}{1}\\
\hline \end{array}$$

Montrons que $ ( \mathcal{B})_{+} $ est bien stable pour la loi $ \succ $. Il suffit de voir que si $ T $ et $ G $ sont deux arbres $ \in \mathcal{T} $, l'arbre $ T \succ G $ définit précédemment est encore un élément de $ \mathcal{T} $. Pour cela, notons $ G_{1}, \hdots ,G_{n} $ la suite des sous-arbres issus de la racine de $ G $, et $ G_{i,1}, \hdots ,G_{i,m_{i}} $ la suite des sous-arbres issus de la racine de $ G_{i} $, pour tout $ i \in \left\lbrace 1, \hdots, n \right\rbrace $. Il y a alors deux cas:
\begin{enumerate}
\item Si $ G = B^{+}(\hdots B^{+}(\tdun{1}, G_{1,1} , \hdots , G_{1,m_{1}}) \hdots), G_{n,1} , \hdots, G_{n,m_{n}}) $, alors par définition de $ \succ $,
$$ T \succ G = B^{+}(\hdots B^{+}(B^{-}(T), G_{1,1} , \hdots , G_{1,m_{1}}), \hdots ), G_{n,1} , \hdots, G_{n,m_{n}}) $$
et ceci est bien un élément de $ \mathcal{T} $.
\item Si il existe un $ 1 \leq i \leq n $ tel que
$$ G = \overbrace{B^{+}(\hdots B^{+}}^{n-i ~ {\rm fois}}(B^{-}(G_{1} , \hdots , G_{i}), G_{i+1,1} , \hdots , G_{i+1,m_{i+1}}), \hdots ), G_{n,1} , \hdots, G_{n,m_{n}}) ,$$
alors par définition de $ \succ $,
$$ T \succ G = \overbrace{B^{+}(\hdots B^{+}}^{n-i ~ {\rm fois}}(B^{-}(T,G_{1} , \hdots , G_{i}), G_{i+1,1} , \hdots , G_{i+1,m_{i+1}}), \hdots ), G_{n,1} , \hdots, G_{n,m_{n}}) ,$$
et ceci est ici encore un élément de $ \mathcal{T} $.
\end{enumerate}
Ainsi $ (\mathcal{B})_{+} $ est stable pour l'opération $ \succ $.\\

{\bf Remarque.} {Pour toute forêt non vide $ T_{1} \hdots T_{n} \in (\mathcal{B})_{+} $, $ (T_{1} \hdots T_{n}) \succ \tdun{1} = B^{-}(T_{1}, \hdots ,T_{n}) $.}\\

La propriété suivante vient directement de la définition de $ \succ $:

\begin{lemma} \label{gauche}
\'Etant donné $ F,G,H \in (\mathcal{B})_{+} $, 
\begin{eqnarray}
(FG) \succ H & = & F \succ (G \succ H), \label{dipt}\\
(F \succ G) H & = & F \succ (GH) \label{dipt2}.
\end{eqnarray}
\end{lemma}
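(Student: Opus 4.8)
The plan is to reduce both identities to the defining recursions of $\succ$ and unwind them. Since $\succ$ is bilinear and $(\mathcal{B})_+$ admits as a basis the forests $T_1 \cdots T_n$ with $n \geq 1$ and $T_1,\ldots,T_n \in \mathcal{T}$ (freeness of $\mathcal{B}$ over $\mathcal{T}\cup\{1\}$), it suffices to prove (\ref{dipt}) and (\ref{dipt2}) when $F = F_1 \cdots F_p$, $G = G_1 \cdots G_q$ and $H = H_1 \cdots H_r$ are such forests, with all components in $\mathcal{T}$ and $p,q,r \geq 1$. Note that the conventions $1 \succ T = T$ and $T \succ 1 = 0$ never intervene, precisely because we stay inside the augmentation ideal; in particular $G \neq 1$.

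For (\ref{dipt2}) I would simply apply the forest–forest rule on each side: $F \succ (GH) = F \succ (G_1 \cdots G_q H_1 \cdots H_r) = (F \succ G_1)\,G_2 \cdots G_q H_1 \cdots H_r$, while $(F \succ G)H = \bigl((F \succ G_1)\,G_2 \cdots G_q\bigr) H_1 \cdots H_r$, and these coincide; here it is essential that $G \neq 1$, which holds since $G \in (\mathcal{B})_+$. For (\ref{dipt}) I would first strip off the factors $H_2 \cdots H_r$ on both sides with the same rule: $(FG) \succ H = \bigl((FG) \succ H_1\bigr) H_2 \cdots H_r$, and, setting $K := (G_1 \cdots G_q) \succ H_1 \in \mathcal{T}$, one gets $G \succ H = K\, H_2 \cdots H_r$, hence $F \succ (G \succ H) = (F \succ K)\,H_2 \cdots H_r$. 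It then remains to settle the single‑tree identity $(FG) \succ H_1 = F \succ K$. Expanding the left side by the forest–tree rule gives the right‑nested expression $F_1 \succ (F_2 \succ (\cdots (F_p \succ (G_1 \succ (\cdots (G_q \succ H_1)\cdots)))\cdots))$, and expanding first $K$ and then $F \succ K$ by the same rule yields literally the same nested expression; being syntactically identical, the two also carry the same vertex indexing, so (\ref{dipt}) follows.

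The only step that could conceivably be delicate is the bookkeeping of vertex indices in this last reduction, and it causes no difficulty precisely because, after full expansion by the forest–tree rule, both sides are the very same iterated $\succ$ of single trees of $\mathcal{T}$, leaving nothing to reconcile. If a geometric formulation is preferred, one can observe that $T_1 \cdots T_n \succ (G_1 \cdots G_m)$ inserts $T_1,\ldots,T_n$, in this order, as the first (leftmost) children of the root of $G_1$, with the obvious re‑indexing, and leaves $G_2,\ldots,G_m$ untouched; identities (\ref{dipt}) and (\ref{dipt2}) then merely record that this "prepending to the root of the leftmost tree" is compatible, respectively, with concatenation of the prepended forest and with concatenation on the right.
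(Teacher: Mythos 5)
Your proof is correct: the paper itself gives no argument (it declares the lemma evident from the definition of $\succ$), and your unwinding of the three defining rules — tree on tree, forest on tree, forest on forest — is exactly the verification the paper leaves implicit, including the observation that the conventions $1\succ T=T$, $T\succ 1=0$ never intervene inside $(\mathcal{B})_{+}$. The reduction of (\ref{dipt}) to the single right-nested expression $F_1\succ(\cdots(G_q\succ H_1)\cdots)$ on both sides, with the indexing carried along automatically, settles the matter.
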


{\bf Remarque.} {L'opération $ \succ $ n'est pas associative. Par exemple,
$$ \begin{array}{rclcl}
\tdun{1} \succ (\tdun{1} \succ \tdun{1}) & = & \tdun{1} \succ \tddeux{2}{1} & = & \tdtroisun{3}{2}{1},\\
(\tdun{1} \succ \tdun{1}) \succ \tdun{1} & = & \tddeux{2}{1} \succ \tdun{1} & = & \tdtroisdeux{1}{2}{3}.
\end{array} $$
}

\begin{defi}
Une algèbre de greffes à gauche est un espace vectoriel $ A $ muni de deux opérations $ \ast $ et $ \succ $ satisfaisant les deux relations suivantes : $ \forall ~ x,y,z \in A $
\begin{eqnarray*}
(x \ast y) \ast z & = & x \ast (y \ast z),\\
(x \ast y) \succ z & = & x \succ (y \succ z),\\
(x \succ y) \ast z & = & x \succ (y \ast z).
\end{eqnarray*}
Une algèbre de greffes à gauche unitaire $ A $ est un espace vectoriel $ A = \mathbb{K}1 \oplus \overline{A} $ tel que $ \overline{A} $ est une algèbre de greffes à gauche et où on a étendu les deux opérations $ \ast $ et $ \succ $ comme suit:
\begin{equation}\label{tens2}\begin{array}{rcl}
1 \ast a = a \ast 1 = a, {\it ~ pour ~ tout ~} a \in A, \\
1 \succ a = a, ~ a \succ 1 = 0, {\it ~ pour ~ tout ~} a \in \overline{A}.
\end{array}\end{equation}
\end{defi}

Remarquons que $ 1 \succ 1 $ n'est pas défini. Si $ A = \mathbb{K}1 \oplus \overline{A} $ et $ B = \mathbb{K}1 \oplus \overline{B} $ sont deux algèbres de greffes à gauches unitaires, on peut étendre les deux opérations $ \ast $ et $ \succ $ à leur produit tensoriel $ A \otimes B $ en posant:
\begin{equation}\label{tens1}\left\{\begin{array}{rcl}
(a \otimes b) \ast (a' \otimes b') & := & (a \ast a') \otimes (b \ast b'),\\
(a \otimes b) \succ (a' \otimes b') & := & (a \ast a') \otimes (b \succ b'), {\rm ~ si ~} b \otimes b' \neq 1 \otimes 1,\\
(a \otimes 1) \succ (a' \otimes 1) & := & (a \succ a') \otimes 1,
\end{array}\right. \end{equation}
pour $ a,a' \in A $, $ b,b' \in B $.\\

On définira dans la section \ref{greffeadroite} la notion d'algèbre de greffes à droite.\\

Grâce au lemme \ref{gauche}, $ \mathcal{B} $ muni de la concaténation et de $ \succ $ est une algèbre de greffes à gauche unitaire, où $ \overline{\mathcal{B}} = (\mathcal{B})_{+} $ et $ 1 $ est l'arbre vide, de tel sorte que $ \mathcal{B} = \mathbb{K}1 \oplus \overline{\mathcal{B}} $.

\begin{theo} \label{libregauche}
$ (\mathcal{B}_{l})_{+} $ est l'algèbre de greffes à gauche libre engendrée par l'élément $ \tdun{1} $.
\end{theo}

\begin{proof}
Soit $ A $ une algèbre de greffes à gauche et soit $ a \in A $. Il faut montrer qu'il existe un unique morphisme d'algèbres de greffes à gauche $ \phi : (\mathcal{B}_{l})_{+} \rightarrow A $ tel que $ \phi (\tdun{$1$}) = a $. On définit $ \phi (F) $ pour toute forêt non vide $ F \in (\mathcal{B}_{l})_{+} $ inductivement sur le degré de $ F $:
\begin{eqnarray*}
\phi (\tdun{$1$}) & = & a,\\
\phi (T_{1} \hdots T_{k}) & = & \phi (T_{1}) \hdots \phi (T_{k})  ~ {\rm si} ~ k \geq 2,\\
\phi( B^{-}(T_{1}, \hdots ,T_{k})) & = & \phi(T_{1} \hdots T_{k}) \succ a ~ {\rm si} ~ k \geq 1.
\end{eqnarray*}
Comme le produit $ \ast $ dans $ A $ est associatif, $ \phi $ est bien définie. $ \phi $ s'étend par linéarité en une application $ \phi : (\mathcal{B})_{+} \rightarrow A $. Montrons que c'est un morphisme d'algèbres de greffes à gauche. Par le second point, $ \phi(FG)= \phi(F) \phi(G) $ pour toutes forêts $ F,G \in (\mathcal{B}_{l})_{+} $. Considérons deux forêts non vides $ F $ et $ G $. Il faut prouver que $ \phi (F \succ G) = \phi(F) \succ \phi(G) $. On travaille par induction sur le degré $ n $ de $ G $. Si $ n=1 $, $ G = \tdun{1} $, et :
$$ \phi (F \succ G) = \phi (B^{-}(F)) = \phi(F) \succ a = \phi (F) \succ \phi (G) .$$
Supposons le résultat vérifié pour toutes forêts de degré $ < n $. Considérons alors $ G \in (\mathcal{B}_{l})_{+} $ une forêt de degré $ n \geq 2 $ et une forêt non vide $ F=F_{1} \hdots F_{m} \in (\mathcal{B}_{l})_{+} $. Notons $ k $ la longueur de $ G $. Il y a deux cas suivant la longueur $ k $ de $ G $:
\begin{enumerate}
\item Si $ k \geq 2 $, $ G=G_{1} \hdots G_{k} $. Alors
\begin{eqnarray*}
\phi(F \succ G) & = & \phi((F_{1} \hdots F_{m}) \succ (G_{1} \hdots G_{k}))\\
& = & \phi(((F_{1} \hdots F_{m}) \succ G_{1}) G_{2} \hdots G_{k})\\
& = & \phi((F_{1} \hdots F_{m}) \succ G_{1}) \phi(G_{2}) \hdots \phi(G_{k})\\
& = & (\phi(F_{1} \hdots F_{m}) \succ \phi(G_{1})) \phi(G_{2}) \hdots \phi(G_{k})\\
& = & \phi(F) \succ (\phi(G_{1}) \phi(G_{2}) \hdots \phi(G_{k}))\\
& = & \phi(F) \succ \phi(G),
\end{eqnarray*}
en utilisant l'hypothèse de récurrence à la quatrième égalité.
\item Si $ k = 1 $, $ G $ est un arbre de degré $ n \geq 2 $, donc $ G = B^{-}(G_{1}, \hdots ,G_{l}) $, avec $ G_{1}, \hdots ,G_{l} \in (\mathcal{B}_{l})_{+} $ et $ l \geq 1 $. Alors
\begin{eqnarray*}
\phi(F \succ G) & = & \phi((F_{1} \hdots F_{m}) \succ B^{-}(G_{1}, \hdots ,G_{l}))\\
& = & \phi(B^{-}(F_{1}, \hdots ,F_{m},G_{1}, \hdots ,G_{l}))\\
& = & \phi(F_{1} \hdots F_{m} G_{1} \hdots G_{l}) \succ a\\
& = & (\phi(F_{1} \hdots F_{m}) \phi(G_{1} \hdots G_{l})) \succ a\\
& = & \phi(F) \succ (\phi(G_{1} \hdots G_{l}) \succ a)\\
& = & \phi(F) \succ \phi(B^{-}(G_{1}, \hdots ,G_{l}))\\
& = & \phi(F) \succ \phi(G).
\end{eqnarray*}
\end{enumerate}
Ainsi, dans tous les cas, $ \phi(F \succ G) = \phi(F) \succ \phi(G) $. Par le principe de récurrence, cette formule est donc démontrée pour toute forêt $ F, G \in (\mathcal{B}_{l})_{+} $.\\

Soit $ \phi': (\mathcal{B}_{l})_{+} \rightarrow A $ un deuxième morphisme d'algèbres de greffes à gauche tel que $ \phi'(\tdun{$1$}) = a $. Soient $ k \geq 1 $ et $ T_{1}, \hdots ,T_{k} \in \mathcal{G} \cap \mathcal{B}_{l} $. Alors $ \phi'(T_{1} \hdots T_{k}) = \phi'(T_{1}) \hdots \phi'(T_{k}) $. De plus,
\begin{eqnarray*}
\phi'(B^{-}(T_{1}, \hdots ,T_{k})) & = & \phi'((T_{1} \hdots T_{k}) \succ \tdun{1})\\
& = & \phi'(T_{1} \hdots T_{k}) \succ \phi'(\tdun{1})\\
& = & \phi'(T_{1} \hdots T_{k}) \succ a.
\end{eqnarray*}
Donc $ \phi' = \phi $ et ceci termine la démonstration.
\end{proof}

\vspace{0.5cm}

Il existe une relation entre $ \succ $ et le coproduit :

\begin{prop} \label{deltasucc}
Rappelons que: $ \Delta_{\boldsymbol{l}} (F) = \displaystyle\sum_{\boldsymbol{v} \models V(F) {\rm ~ et ~} roo_{\boldsymbol{l}}(F) \in Roo_{\boldsymbol{v}} (F)} Lea_{\boldsymbol{v}} (F) \otimes Roo_{\boldsymbol{v}} (F) $ pour toute forêt non vide $ F \in (\mathcal{B})_{+} $, où $ roo_{\boldsymbol{l}}(F) $ est la racine de l'arbre le plus à gauche de la forêt $ F $. Alors pour tout $ F \in \mathcal{B} $ et $ G \in (\mathcal{B})_{+} $,
\begin{equation}
\Delta_{\boldsymbol{l}} (F \succ G) = \Delta(F) \succ \Delta_{\boldsymbol{l}} (G).
\end{equation}
De plus: $ ( \Delta \otimes Id) \circ \Delta_{\boldsymbol{l}} (G) = ( Id \otimes \Delta_{\boldsymbol{l}}) \circ \Delta_{\boldsymbol{l}} (G). $
\end{prop}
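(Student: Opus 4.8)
The plan is to establish both formulas directly from the combinatorics of admissible cuts, without induction, in the spirit of the proof of Proposition \ref{3}. (Both sides lie in $ \mathcal{B} \otimes \mathcal{B} $, since $ \mathcal{B} $ is a Hopf algebra stable under $ \succ $.) I first record the effect of $ \succ $ on ordered forests. Write $ F = F_{1} \hdots F_{n} $ and $ G = G_{1} \hdots G_{m} $ with $ F_{1}, \hdots ,F_{n}, G_{1}, \hdots ,G_{m} \in \mathcal{T} $; the case $ F = 1 $ is immediate, so assume $ n \geq 1 $. The relation $ (F_{1} \hdots F_{n}) \succ G_{1} = F_{1} \succ ( \hdots (F_{n} \succ G_{1}) \hdots ) $, together with the description of $ T \succ H $ as grafting the root of $ T $ as the leftmost child of the root of $ H $, shows that $ F \succ G = T^{\ast} G_{2} \hdots G_{m} $, where $ T^{\ast} $ is obtained from $ G_{1} $ by inserting $ F_{1}, \hdots ,F_{n} $, in this order, as the leftmost children of the root $ r $ of $ G_{1} $, the order on vertices placing those of $ F $ (in their original order) before those of $ G $. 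In particular $ roo_{\boldsymbol{l}}(F \succ G) = r = roo_{\boldsymbol{l}}(G) $.

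The heart of the argument is a bijection between the admissible cuts $ \boldsymbol{v} \models V(F \succ G) $ with $ r \in Roo_{\boldsymbol{v}}(F \succ G) $ and the pairs $ (\boldsymbol{w}, \boldsymbol{w}') $ with $ \boldsymbol{w} \models V(F) $ arbitrary and $ \boldsymbol{w}' \models V(G) $ such that $ roo_{\boldsymbol{l}}(G) \in Roo_{\boldsymbol{w}'}(G) $. Indeed $ r \in Roo_{\boldsymbol{v}} $ forces $ r \notin \boldsymbol{v} $, hence $ \boldsymbol{v} $ decomposes into arbitrary admissible cuts of the children-subtrees of $ r $ in $ T^{\ast} $ — namely the $ F_{i} $ (whose subtrees in $ T^{\ast} $ are the $ F_{i} $ unchanged) and the original children-subtrees $ D_{1}, \hdots ,D_{p} $ of $ r $ in $ G_{1} $ — together with arbitrary cuts of $ G_{2}, \hdots ,G_{m} $; the pieces on the $ F_{i} $ reassemble into $ \boldsymbol{w} \models V(F) $, and those on $ D_{1}, \hdots ,D_{p}, G_{2}, \hdots ,G_{m} $, with $ r $ kept in the trunk, reassemble into $ \boldsymbol{w}' $. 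Under this bijection I would check, at the level of ordered forests, that $ Lea_{\boldsymbol{v}}(F \succ G) = Lea_{\boldsymbol{w}}(F) \, Lea_{\boldsymbol{w}'}(G) $ and $ Roo_{\boldsymbol{v}}(F \succ G) = Roo_{\boldsymbol{w}}(F) \succ Roo_{\boldsymbol{w}'}(G) $: the first is clear since $ F $-vertices precede $ G $-vertices throughout, and for the second one observes that $ Roo_{\boldsymbol{v}}(T^{\ast}) $ is the tree with root $ r $ whose children-subtrees are the non-empty $ Roo(F_{i}) $ followed by the non-empty $ Roo(D_{j}) $, that is, exactly $ (Roo(F_{1}) \hdots Roo(F_{n})) \succ Roo_{\boldsymbol{w}'}(G_{1}) $, after which relation (\ref{dipt2}) of Lemma \ref{gauche} absorbs the remaining factors $ Roo(G_{2}) \hdots Roo(G_{m}) $. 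Summing over all such $ \boldsymbol{v} $, and using that $ roo_{\boldsymbol{l}}(G) \in Roo_{\boldsymbol{w}'}(G) $ forces $ Roo_{\boldsymbol{w}'}(G) \neq 1 $ — so that only the first of the three rules (\ref{tens1}) defining $ \succ $ on $ \mathcal{B} \otimes \mathcal{B} $ is ever used — one gets exactly $ \Delta(F) \succ \Delta_{\boldsymbol{l}}(G) $. The step requiring the most care is the correct isolation of the leftmost tree $ T^{\ast} $, whose vertices come partly from $ F $ and partly from $ G_{1} $; once that is in place, the relabelling bookkeeping is harmless, because $ \cdot $ and $ \succ $ both concatenate the two orders and ordered forests are taken up to isomorphism.

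For the identity $ (\Delta \otimes Id) \circ \Delta_{\boldsymbol{l}} = (Id \otimes \Delta_{\boldsymbol{l}}) \circ \Delta_{\boldsymbol{l}} $ I would argue as in the proof of the dendriform coalgebra axioms (\ref{E2}). Coassociativity of $ \Delta $ (which holds, $ \mathcal{B} $ being a Hopf algebra) lets one write $ (\Delta \otimes Id) \circ \Delta(G) = (Id \otimes \Delta) \circ \Delta(G) = \sum G^{(1)} \otimes G^{(2)} \otimes G^{(3)} $, a sum over pairs of nested admissible cuts of $ G $; applying $ \Delta $ to the left factor of $ \Delta_{\boldsymbol{l}}(G) $ imposes exactly the condition $ roo_{\boldsymbol{l}}(G) \in G^{(3)} $, so $ (\Delta \otimes Id) \circ \Delta_{\boldsymbol{l}}(G) = \sum_{ roo_{\boldsymbol{l}}(G) \in G^{(3)} } G^{(1)} \otimes G^{(2)} \otimes G^{(3)} $. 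For the other side, the only extra input is that when $ roo_{\boldsymbol{l}}(G) \in Roo_{\boldsymbol{v}}(G) $ the part of $ Roo_{\boldsymbol{v}}(G) $ lying in the leftmost tree of $ G $ is ancestor-closed there, hence a connected subtree with root $ roo_{\boldsymbol{l}}(G) $, and it is the leftmost tree of $ Roo_{\boldsymbol{v}}(G) $; thus $ roo_{\boldsymbol{l}}(Roo_{\boldsymbol{v}}(G)) = roo_{\boldsymbol{l}}(G) $, and applying $ \Delta_{\boldsymbol{l}} $ to the right factor again imposes precisely $ roo_{\boldsymbol{l}}(G) \in G^{(3)} $. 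Hence the two iterated maps coincide. I expect this last point to be routine; the genuine work is the cut analysis underlying the first formula.
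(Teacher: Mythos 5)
Your argument is correct, and it in fact supplies the combinatorial content that the paper's own proof leaves implicit. The paper proves the first identity by expanding $\Delta(F)\succ\Delta_{\boldsymbol{l}}(G)=(F\otimes 1+1\otimes F+F'\otimes F'')\succ(1\otimes G+G'_{\boldsymbol{l}}\otimes G''_{\boldsymbol{l}})$ formally via the rules (\ref{tens1}) and (\ref{tens2}), and then simply asserts that the resulting six terms add up to $\Delta_{\boldsymbol{l}}(F\succ G)$; your bijection between the admissible cuts $\boldsymbol{v}\models V(F\succ G)$ with $roo_{\boldsymbol{l}}(F\succ G)=roo_{\boldsymbol{l}}(G_{1})$ in the trunk and the pairs $(\boldsymbol{w},\boldsymbol{w}')$, together with the identities $Lea_{\boldsymbol{v}}(F\succ G)=Lea_{\boldsymbol{w}}(F)\,Lea_{\boldsymbol{w}'}(G)$ and $Roo_{\boldsymbol{v}}(F\succ G)=Roo_{\boldsymbol{w}}(F)\succ Roo_{\boldsymbol{w}'}(G)$ (the latter using (\ref{dipt2}) to absorb $Roo(G_{2})\hdots Roo(G_{m})$), is precisely the verification of that last step. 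Your observation that $roo_{\boldsymbol{l}}(G)\in Roo_{\boldsymbol{w}'}(G)$ rules out $Roo_{\boldsymbol{w}'}(G)=1$, so that the case $b\otimes b'=1\otimes 1$ of (\ref{tens1}) never occurs, is exactly the point that makes the formal computation legitimate. The second identity is handled the same way in the paper (writing $(\Delta\otimes Id)\circ\Delta(G)=\sum G^{(1)}\otimes G^{(2)}\otimes G^{(3)}$ by coassociativity and observing that both iterated maps retain exactly the terms with $roo_{\boldsymbol{l}}(G)\in G^{(3)}$); your extra remark that $roo_{\boldsymbol{l}}(Roo_{\boldsymbol{v}}(G))=roo_{\boldsymbol{l}}(G)$ whenever $roo_{\boldsymbol{l}}(G)\in Roo_{\boldsymbol{v}}(G)$ is again a needed detail the paper passes over in silence. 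The only quibble is terminological: the rule of (\ref{tens1}) actually invoked is the second line (the first line defines $\ast$, not $\succ$).
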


\begin{proof}
Pour démontrer la première égalité, on utilisera les notations suivantes: si $ a \in \mathcal{B} $, $ \Delta (a) = a \otimes 1 + 1 \otimes a + a' \otimes a'' $, et si $ a \in (\mathcal{B})_{+} $, $ \Delta_{\boldsymbol{l}} (a) = 1 \otimes a + a'_{\boldsymbol{l}} \otimes a''_{\boldsymbol{l}} $. Alors, pour tout $ x \in \mathcal{B} $ et $ y \in (\mathcal{B})_{+} $,
\begin{eqnarray*}
\Delta(x) \succ \Delta_{\boldsymbol{l}} (y) & = & (x \otimes 1 + 1 \otimes x + x' \otimes x'') \succ (1 \otimes y + y'_{\boldsymbol{l}} \otimes y''_{\boldsymbol{l}}) \\
& = & x \otimes (1 \succ y) + xy'_{\boldsymbol{l}} \otimes (1 \succ y''_{\boldsymbol{l}}) + 1 \otimes (x \succ y) + y'_{\boldsymbol{l}} \otimes (x \succ y''_{\boldsymbol{l}})\\
& \hspace{0.5cm} & + x' \otimes (x'' \succ y) + x'y'_{\boldsymbol{l}} \otimes (x'' \succ y''_{\boldsymbol{l}}) \\
& = & x \otimes y + xy'_{\boldsymbol{l}} \otimes y''_{\boldsymbol{l}} + 1 \otimes (x \succ y) + y'_{\boldsymbol{l}} \otimes (x \succ y''_{\boldsymbol{l}}) + x' \otimes (x'' \succ y)\\
& \hspace{0.5cm} & + x'y'_{\boldsymbol{l}} \otimes (x'' \succ y''_{\boldsymbol{l}}) \\
& = & \Delta_{\boldsymbol{l}} (x \succ y),
\end{eqnarray*}
en utilisant (\ref{tens1}) à la deuxième égalité et (\ref{tens2}) à la troisième.\\

Pour la deuxième égalité, on peut poser, pour toute forêt $ x \in (\mathcal{B})_{+} $, par coassociativité de $ \Delta $:
$$ (\Delta \otimes Id) \circ \Delta (x) = (Id \otimes \Delta) \circ \Delta (x) = \sum x^{(1)} \otimes x^{(2)} \otimes x^{(3)} ,$$
où $ x^{(1)} , x^{(2)} , x^{(3)} $ sont des sous-forêts de $ x $. Alors:
\begin{eqnarray*}
(\Delta \otimes Id) \circ \Delta_{\boldsymbol{l}} (x) = (Id \otimes \Delta_{\boldsymbol{l}}) \circ \Delta_{\boldsymbol{l}} (x) = \sum_{roo_{\boldsymbol{l}}(x) \in x^{(3)}} x^{(1)} \otimes x^{(2)} \otimes x^{(3)}.
\end{eqnarray*}
Le résultat suit par linéarité.
\end{proof}

\subsubsection{Algèbre de greffes à droite} \label{greffeadroite}

On peut aussi définir une opération $ \prec : (\mathcal{B})_{+} \times (\mathcal{B})_{+} \rightarrow (\mathcal{B})_{+} $. Cela va permettre de munir $ \mathcal{B} $ d'une structure d'algèbre de greffes à droite unitaire.\\

 \'Etant donnés deux arbres non vides $ T $ et $ G $ appartenant à $ \mathcal{T} $, on définit un arbre $ T \prec G $ en greffant par la droite $ G $ sur la racine de $ T $ et en indexant les sommets comme suit: on conserve l'indexation des sommets de $ T $, puis on numérote les sommets de $ G $ différents de la racine de $ G $ en conservant l'ordre initial, et on termine en numérotant la racine de $ G $ (par $ \left| T \right| + \left| G \right| $). Considérons maintenant un arbre $ T $ et une forêt $ G_{1} \hdots G_{m} $, avec $ T,G_{1}, \hdots ,G_{m} \in \mathcal{T} $, on définit l'arbre $ T \prec (G_{1} \hdots G_{m}) $ en le posant égal à $ ( \hdots ((T \prec G_{1}) \prec G_{2}) \hdots \prec G_{m}) $. \'Etant données deux forêts non vides $ T_{1} \hdots T_{n} $ et $ G_{1} \hdots G_{m} $, avec $ n,m \geq 1 $ et $ T_{1}, \hdots ,T_{n} ,G_{1}, \hdots, G_{m} \in \mathcal{T} $, on pose $ (T_{1} \hdots T_{n}) \prec (G_{1} \hdots G_{m}) = T_{1} \hdots T_{n-1} (T_{n} \prec (G_{1}  \hdots G_{m})) $. En étendant par linéarité $ \prec $, ceci définit une nouvelle opération sur $ (\mathcal{B})_{+} $. Si $ T \in (\mathcal{B})_{+} $, on pose $ T \prec 1 = T $ et $ 1 \prec T = 0 $.\\

{\bf Exemples.} Nous illustrons ci-dessous l'opération $ \prec $:
$$\begin{array}{|rclcl|rclcl|rclcl|}
\hline \tdun{1}\tdun{2} &\prec& \tdun{1} \tdun{2} &=& \tdun{1} \tdtroisun{2}{4}{3} &\tdun{1} \tddeux{3}{2} &\prec& \tdun{1} &=&\tdun{1} \tdtroisun{3}{4}{2} &
\tdun{1} &\prec& \tddeux{1}{2} &=& \tdtroisdeux{1}{3}{2} \\
\tdun{1} \tdun{2} &\prec& \tdtroisun{1}{3}{2} &=&\tdun{1} \tdquatrequatre{2}{5}{4}{3}& \tddeux{1}{2} &\prec& \tddeux{1}{2} &=&\tdquatretrois{1}{4}{3}{2}&
\tdun{1}&\prec& \tdun{1} \tdun{2} \tdun{3} &=&\tdquatreun{1}{4}{3}{2}\\
\tddeux{1}{2} &\prec& \tdun{1} \tddeux{2}{3}&=& \tdcinqquatre{1}{5}{4}{3}{2} & \tddeux{1}{2} &\prec& \tdtroisun{2}{3}{1} &=&\tdcinqsept{1}{5}{4}{3}{2}&
\tddeux{1}{2} &\prec& \tdtroisdeux{2}{3}{1} &=&\tdcinqneuf{1}{5}{4}{3}{2}\\
\hline \end{array}$$

Vérifions que $ (\mathcal{B})_{+} $ est bien stable par $ \prec $. Il suffit de montrer que si $ T $ et $ G $ sont deux arbres non vides appartenant à $ \mathcal{T} $, l'arbre $ T \prec G $ définit précédemment est encore un élément de $ \mathcal{T} $. Si $ G = \tdun{1} $, $ T \prec G = B^{+}(T) $ appartient à $ \mathcal{T} $. Supposons maintenant que $ \left| G \right| \geq 2 $. On note $ G_{1}, \hdots ,G_{n} $ la suite des sous-arbres issus de la racine de $ G $, et $ G_{i,1}, \hdots ,G_{i,m_{i}} $ la suite des sous-arbres issus de la racine de $ G_{i} $ pour tout $ i \in \left\lbrace 1, \hdots, n \right\rbrace $ (si $ G_{i} = \tdun{1} $, alors $ m_{i}=1 $ et $ G_{i,m_{i}} = 1 $). Il y a alors plusieurs cas:
\begin{enumerate}
\item Si $ G = B^{+}( \hdots B^{+}(\tdun{1}, G_{1,1} , \hdots , G_{1,m_{1}}), \hdots ), G_{n,1} , \hdots, G_{n,m_{n}}) $, où $ G_{1,1}, \hdots , G_{n,m_{n}} \in \mathcal{T} $ par construction. Alors, pour tout $ i \in \left\lbrace 1, \hdots, n \right\rbrace $, $ G_{i} = B^{-}(G_{i,1}, \hdots ,G_{i,m_{i}}) \in \mathcal{T} $, et donc
\begin{eqnarray*}
T \prec G = B^{+} (T, G_{1}, \hdots, G_{n})
\end{eqnarray*}
est un élément de $ (\mathcal{B})_{+} $.
\item Si il existe un $ 1 \leq i \leq n $ tel que
$$ G = \overbrace{B^{+}(\hdots B^{+}}^{n-i ~ {\rm fois}}(B^{-}(G_{1} , \hdots , G_{i}), G_{i+1,1} , \hdots , G_{i+1,m_{i+1}}), \hdots ), G_{n,1} , \hdots, G_{n,m_{n}}) ,$$
avec par construction $ G_{1}, \hdots , G_{i}, G_{i+1,1}, \hdots, G_{n,m_{n}} \in \mathcal{T} $. Alors, pour tout $ i+1 \leq j \leq n $, $ G_{j} = B^{-}(G_{j,1}, \hdots ,G_{j,m_{j}}) \in \mathcal{T} $, et donc
\begin{eqnarray*}
T \prec G = B^{+} (T, G_{1}, \hdots, G_{i}, G_{i+1}, \hdots, G_{n})
\end{eqnarray*}
est un élément de $ (\mathcal{B})_{+} $.
\end{enumerate}
Ainsi, dans tout les cas, $ (\mathcal{B})_{+} $ est stable par l'opération $ \prec $.\\

{\bf Remarque.} {Soient $ n \geq 1 $ et $ T_{1} ,\hdots ,T_{n} $ $ n $ arbres non vides $ \in (\mathcal{B})_{+} $. Alors
$$ T_{1} \prec B^{-}(T_{2}, \hdots, T_{n}) = B^{+}(T_{1}, \hdots, T_{n}) .$$}

Le lemme suivant est évident :

\begin{lemma} \label{droite}
\'Etant donné $ F,G,H \in (\mathcal{B})_{+} $,
\begin{eqnarray}
F \prec (GH) & = & (F \prec G) \prec H, \label{dipt3}\\
F (G \prec H) & = & (FG) \prec H \label{dipt4}.
\end{eqnarray}
\end{lemma}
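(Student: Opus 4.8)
The plan is to prove both identities exactly as Lemma~\ref{gauche} was proved, namely by directly unwinding the recursive definition of $\prec$. By bilinearity it suffices to treat the case where $F$, $G$ and $H$ are forests obtained by concatenating elements of $\mathcal{T}$; write $F = T_{1} \hdots T_{n}$, $G = G_{1} \hdots G_{p}$ and $H = H_{1} \hdots H_{q}$ with $n,p,q \geq 1$ and all $T_{i}, G_{j}, H_{k} \in \mathcal{T}$, the cases in which one of $F,G,H$ is the empty forest $1$ being immediate from the conventions $T \prec 1 = T$ and $1 \prec T = 0$. The one structural fact to keep in mind is that, by definition, $\prec$ only ever modifies the \emph{rightmost} tree of its left argument, and inserts the trees of its right argument one after another, from left to right, as the rightmost child of that evolving root.

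For~(\ref{dipt3}): since $F \prec X = T_{1} \hdots T_{n-1}(T_{n} \prec X)$ for every nonempty forest $X$, both sides reduce to the case $F = T$ a single tree, so it remains to check $T \prec (GH) = (T \prec G) \prec H$. Now $GH = G_{1} \hdots G_{p} H_{1} \hdots H_{q}$ as a word of trees, so by the definition of $\prec$ on a forest, $T \prec (GH)$ is obtained by grafting $G_{1}, \hdots, G_{p}, H_{1}, \hdots, H_{q}$ successively onto the (evolving) root; the first $p$ graftings build $T \prec G$, which is again a single tree, and the last $q$ then build $(T \prec G) \prec H$. For~(\ref{dipt4}): by definition $G \prec H = G_{1} \hdots G_{p-1}(G_{p} \prec H)$, so the left-hand side equals $T_{1} \hdots T_{n} G_{1} \hdots G_{p-1}(G_{p} \prec H)$; on the other hand $FG = T_{1} \hdots T_{n} G_{1} \hdots G_{p}$ is a word of trees whose last letter is $G_{p}$, whence $(FG) \prec H = T_{1} \hdots T_{n} G_{1} \hdots G_{p-1}(G_{p} \prec H)$ as well, and the two expressions agree.

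I do not expect any real obstacle: these are the diptère-type compatibilities dual to~(\ref{dipt})--(\ref{dipt2}), and both are forced by the fact that $\prec$ is ``insertion into the rightmost component'', an operation visibly compatible with concatenation on the left. The only point that deserves a line of verification is that the re-indexing prescribed in the definition of $\prec$ --- the vertices coming from the right argument keeping their relative order, and the grafted roots being numbered last --- is carried out identically on the two sides of each identity, so that the resulting ordered forests are literally equal and not merely isomorphic as planar trees; this is routine and parallels the corresponding check implicit in Lemma~\ref{gauche}.
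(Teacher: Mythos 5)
Your verification is correct: both identities follow by unwinding the recursive definition of $\prec$ (which only modifies the rightmost tree of its left argument), exactly as you do, and your remark about checking that the re-indexing agrees on both sides is the only nontrivial point. The paper itself gives no proof — it simply declares the lemma \emph{évident}, as it did for Lemma~\ref{gauche} — so your argument supplies precisely the routine computation the author left implicit.
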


{\bf Remarque.} {Comme pour $ \succ $, $ \prec $ n'est pas associative. Par exemple,
$$ \begin{array}{rclcl}
\tdun{1} \prec (\tdun{1} \prec \tdun{1}) & = & \tdun{1} \prec \tddeux{1}{2} & = & \tdtroisdeux{1}{3}{2},\\
(\tdun{1} \prec \tdun{1}) \prec \tdun{1} & = & \tddeux{1}{2} \prec \tdun{1} & = & \tdtroisun{1}{3}{2}.
\end{array} $$
}

\begin{defi}
Une algèbre de greffes à droite est un espace vectoriel $ A $ muni de deux opérations $ \ast $ et $ \prec $ satisfaisant les deux relations suivantes : $ \forall ~ x,y,z \in A $
\begin{eqnarray*}
(x \ast y) \ast z & = & x \ast (y \ast z),\\
x \prec (y \ast z) & = & (x \prec y) \prec z,\\
x \ast (y \prec z) & = & (x \ast y) \prec z.
\end{eqnarray*}
Une algèbre de greffes à droite unitaire $ A $ est un espace vectoriel $ A = \mathbb{K}1 \oplus \overline{A} $ tel que $ \overline{A} $ est une algèbre de greffes à droite et où on a étendu les deux opérations $ \ast $ et $ \prec $ comme suit:
\begin{equation}\label{tens3}\begin{array}{rcl}
1 \ast a = a \ast 1 = a, {\it ~ pour ~ tout ~} a \in A, \\
a \prec 1 = a, ~ 1 \prec a = 0, {\it ~ pour ~ tout ~} a \in \overline{A}.
\end{array}\end{equation}
\end{defi}

Remarquons que $ 1 \prec 1 $ n'est pas défini. Si $ A = \mathbb{K}1 \oplus \overline{A} $ et $ B = \mathbb{K}1 \oplus \overline{B} $ sont deux algèbres de greffes à droite unitaires, on peut étendre, de la même façon que pour les algèbres de greffes à gauche, les deux opérations $ \ast $ et $ \prec $ au produit tensoriel $ A \otimes B $ en posant:
\begin{equation}\label{tens4}\left\{\begin{array}{rcl}
(a \otimes b) \ast (a' \otimes b') & := & (a \ast a') \otimes (b \ast b'),\\
(a \otimes b) \prec (a' \otimes b') & := & (a \ast a') \otimes (b \prec b'), {\rm ~ si ~} b \otimes b' \neq 1 \otimes 1,\\
(a \otimes 1) \prec (a' \otimes 1) & := & (a \prec a') \otimes 1,
\end{array}\right. \end{equation}
pour $ a,a' \in A $, $ b,b' \in B $.\\

Ainsi, grâce au lemme \ref{droite}, $ \mathcal{B} $ muni de la concaténation et de $ \prec $ est une algèbre de greffes à droite unitaire, où $ \overline{\mathcal{B}} = (\mathcal{B})_{+} $ et $ 1 $ est l'arbre vide de tel sorte que $ \mathcal{B} = \mathbb{K}1 \oplus \overline{\mathcal{B}} $.\\

Notons $ \overline{\mathcal{B}_{r}} $ l'algèbre de greffes à droite engendrée par l'élément $ \tdun{1} $, et $ \mathcal{B}_{r} $ l'algèbre de greffes à droite unitaire égale à $ \mathbb{K} 1 \oplus \overline{\mathcal{B}_{r}} $ de sorte que $ \overline{\mathcal{B}_{r}} = (\mathcal{B}_{r})_{+} $. Alors :

\begin{theo} $ (\mathcal{B}_{r})_{+} $ est libre.
\end{theo}

\begin{proof}
Il faut montrer que si $ T_{1} \hdots T_{m} $ est une forêt non vide appartenant à $ (\mathcal{B}_{r})_{+} $ (avec $ m \geq 2 $) alors $ T_{1}, \hdots, T_{m} \in (\mathcal{B}_{r})_{+} $. Raisonnons par récurrence sur le degré $ n = \left| T_{1} \right| + \hdots + \left| T_{m} \right| $ de la forêt $ T_{1} \hdots T_{m} $, le cas $ n = 1 $ étant trivial. Supposons $ T_{1} \hdots T_{m} \in (\mathcal{B}_{r})_{+} $ de degré $ n \geq 2 $. Par construction de $ \mathcal{B}_{r} $, il existe $ F_{1}, F_{2} \in (\mathcal{B}_{r})_{+} $ tel que $ T_{1} \hdots T_{m} = F_{1} F_{2} $ ou $ T_{1} \hdots T_{m} = F_{1} \prec F_{2} $.
\begin{enumerate}
\item Si $ T_{1} \hdots T_{m} = F_{1} F_{2} $, $ F_{1} $ et $ F_{2} $ étant non vides, il existe $ i \in \{1,\hdots,m-1\} $ tel que $ F_{1} = T_{1} \hdots T_{i} $ et $ F_{2} = T_{i+1} \hdots T_{m} $. Par hypothèse de récurrence, comme $ \left| F_{1} \right| < n $ et $ \left| F_{2} \right| < n $, $ T_{1}, \hdots, T_{i} \in (\mathcal{B}_{r})_{+} $ et $ T_{i+1}, \hdots ,T_{m} \in (\mathcal{B}_{r})_{+} $ ce qui démontre le résultat dans ce cas.
\item Si $ T_{1} \hdots T_{m} = F_{1} \prec F_{2} $, en notant $ G_{1} \hdots G_{k} $ la forêt $ F_{1} $, on a l'égalité $ T_{1} \hdots T_{m-1} T_{m} = G_{1} \hdots G_{k-1} (G_{k} \prec F_{2}) $. Nécessairement, $ k = m $, $ \forall i \in \{1,\hdots,m-1\} $, $ T_{i} = G_{i} $ et $ T_{m} = G_{m} \prec F_{2} $. Par hypothèse de récurrence, comme $ \left| F_{1} \right| < n $, $ G_{1}, \hdots ,G_{m} \in (\mathcal{B}_{r})_{+} $. Donc, $ \forall i \in \{1,\hdots,m-1\} $, $ T_{i} = G_{i} \in (\mathcal{B}_{r})_{+} $. De plus, $ G_{m}, F_{2} \in (\mathcal{B}_{r})_{+} $, donc $ T_{m} = G_{m} \prec F_{2} \in (\mathcal{B}_{r})_{+} $.
\end{enumerate}
Dans tout les cas, $ T_{1}, \hdots, T_{m} \in (\mathcal{B}_{r})_{+} $. On conclut par le principe de récurrence.
\end{proof}

\vspace{0.5cm}

Voici par exemple les arbres appartenant à $ \mathcal{B}_{r} $ de degré $ \leq 4 $:
$$ 1, \tdun{1} , \tddeux{1}{2} , \tdtroisdeux{1}{3}{2}, \tdtroisun{1}{3}{2} , \tdquatreun{1}{4}{3}{2} , \tdquatredeux{1}{4}{3}{2} , \tdquatrequatre{1}{4}{3}{2} , \tdquatretrois{1}{4}{3}{2} , \tdquatrecinq{1}{4}{3}{2} .$$
Remarquons que $ \mathcal{B}_{r} $ n'est ni une algèbre de Hopf ni une sous-algèbre de $ \mathcal{B}^{\infty} $.\\

Il y a une relation pour le coproduit comparable à celle de la proposition \ref{deltasucc}:

\begin{prop} \label{deltaprec}
Posons $ \Delta_{\boldsymbol{r}} (F) = \displaystyle\sum_{\boldsymbol{v} \models V(F) {\rm ~ et ~} roo_{\boldsymbol{r}}(F) \in Roo_{\boldsymbol{v}} (F)} Lea_{\boldsymbol{v}} (F) \otimes Roo_{\boldsymbol{v}} (F) $ pour toute forêt non vide $ F $, où $ roo_{\boldsymbol{r}}(F) $ est la racine de l'arbre le plus à droite de la forêt $ F $. Alors, pour tout $ F \in (\mathcal{B})_{+} $ et $ G \in \mathcal{B} $,
\begin{equation}
\Delta_{\boldsymbol{r}} (F \prec G) = \Delta_{\boldsymbol{r}} (F) \prec \Delta(G).
\end{equation}
De plus: $ ( \Delta \otimes Id) \circ \Delta_{\boldsymbol{r}} (F) = ( Id \otimes \Delta_{\boldsymbol{r}}) \circ \Delta_{\boldsymbol{r}} (G). $
\end{prop}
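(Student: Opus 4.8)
The argument is the right-handed mirror of the proof of Proposition \ref{deltasucc}: one replaces $ \succ $ by $ \prec $, $ \Delta_{\boldsymbol{l}} $ by $ \Delta_{\boldsymbol{r}} $, the leftmost root by the rightmost root, and the conventions (\ref{tens1}), (\ref{tens2}) for left graft algebras by the conventions (\ref{tens4}), (\ref{tens3}) for right graft algebras. By bilinearity it suffices to take $ F $ and $ G $ to be forests. I use the Sweedler-type notation $ \Delta(a) = a \otimes 1 + 1 \otimes a + a' \otimes a'' $ for $ a \in \mathcal{B} $, with $ a', a'' $ non-empty subforests; and I note that for $ a \in (\mathcal{B})_{+} $ the empty cut contributes $ 1 \otimes a $ while the total cut is discarded from $ \Delta_{\boldsymbol{r}} $ (it would place $ roo_{\boldsymbol{r}}(a) $ in the leaf part), so that $ \Delta_{\boldsymbol{r}}(a) = 1 \otimes a + a'_{\boldsymbol{r}} \otimes a''_{\boldsymbol{r}} $ with both $ a'_{\boldsymbol{r}} $ and $ a''_{\boldsymbol{r}} $ non-empty.

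For the first identity I would first expand the right-hand side formally. Using (\ref{tens4}) for the action of $ \prec $ on tensor products together with the unit rules (\ref{tens3}), one finds
$$ \Delta_{\boldsymbol{r}}(F) \prec \Delta(G) = G \otimes F + 1 \otimes (F \prec G) + G' \otimes (F \prec G'') + F'_{\boldsymbol{r}} G \otimes F''_{\boldsymbol{r}} + F'_{\boldsymbol{r}} \otimes (F''_{\boldsymbol{r}} \prec G) + F'_{\boldsymbol{r}} G' \otimes (F''_{\boldsymbol{r}} \prec G'') . $$
Then I would identify this with $ \Delta_{\boldsymbol{r}}(F \prec G) = 1 \otimes (F \prec G) + (F \prec G)'_{\boldsymbol{r}} \otimes (F \prec G)''_{\boldsymbol{r}} $ by classifying the admissible cuts of $ F \prec G $. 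Writing $ F = T_{1} \hdots T_{n} $, the rightmost root of $ F \prec G $ is the root of $ T_{n} $, which is $ roo_{\boldsymbol{r}}(F) $. Given an admissible cut $ \boldsymbol{v} $ of $ F \prec G $ with $ roo_{\boldsymbol{r}}(F) \in Roo_{\boldsymbol{v}}(F \prec G) $, set $ \boldsymbol{v}' = \boldsymbol{v} \cap V(F) $ and $ \boldsymbol{v}'' = \boldsymbol{v} \cap V(G) $; then $ \boldsymbol{v}' $ is an admissible cut of $ F $ not containing the root of $ T_{n} $ (hence empty or proper, and with $ roo_{\boldsymbol{r}}(F) $ on the $ Roo $ side), and $ \boldsymbol{v}'' $ is an arbitrary admissible cut of $ G $. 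Splitting into the cases $ \boldsymbol{v}' = \emptyset $ / $ \boldsymbol{v}' \mmodels V(F) $ combined with $ \boldsymbol{v}'' $ empty / total / proper, one reads off exactly the six terms above (the empty cut of $ F \prec G $ giving $ 1 \otimes (F \prec G) $).

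The second identity is purely formal: by coassociativity of $ \Delta $ write $ (\Delta \otimes Id) \circ \Delta(F) = (Id \otimes \Delta) \circ \Delta(F) = \sum F^{(1)} \otimes F^{(2)} \otimes F^{(3)} $, where the $ F^{(i)} $ are non-empty subforests of $ F $; then both $ (\Delta \otimes Id) \circ \Delta_{\boldsymbol{r}}(F) $ and $ (Id \otimes \Delta_{\boldsymbol{r}}) \circ \Delta_{\boldsymbol{r}}(F) $ equal $ \sum_{roo_{\boldsymbol{r}}(F) \in F^{(3)}} F^{(1)} \otimes F^{(2)} \otimes F^{(3)} $, since in the iterated cut the constraint ``$ roo_{\boldsymbol{r}}(F) $ stays in the last block'' is imposed exactly once, at the final step. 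This is the exact analogue of the corresponding assertion in Proposition \ref{deltasucc}.

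The delicate point is in the first identity, namely the branches of the case analysis in which $ \boldsymbol{v}'' $ is total or proper: one must check that, since the trees of $ G $ are grafted onto $ roo_{\boldsymbol{r}}(F) $ as its rightmost children, the portion of $ G $ that ends up in the leaf forest lies entirely to the right of the leaf forest coming from $ F $, so that $ Lea_{\boldsymbol{v}}(F \prec G) $ is the concatenation $ Lea_{\boldsymbol{v}'}(F) \cdot Lea_{\boldsymbol{v}''}(G) $ (that is, $ F'_{\boldsymbol{r}} G $, resp. $ F'_{\boldsymbol{r}} G' $), matching precisely the product produced by rule (\ref{tens4}); and, dually, that the trunk is $ Roo_{\boldsymbol{v}'}(F) \prec Roo_{\boldsymbol{v}''}(G) $. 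This ordering verification is the right-handed counterpart of the one carried out for $ \nwarrow $ in the proof of Proposition \ref{3}, so while it requires care it is routine; everything else is a line-by-line transcription of Proposition \ref{deltasucc}.
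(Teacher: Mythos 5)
Your proposal is correct and follows essentially the same route as the paper: expand $\Delta_{\boldsymbol{r}}(F)\prec\Delta(G)$ term by term using (\ref{tens4}) and (\ref{tens3}) to obtain the six terms $G\otimes F+1\otimes(F\prec G)+G'\otimes(F\prec G'')+F'_{\boldsymbol{r}}G\otimes F''_{\boldsymbol{r}}+F'_{\boldsymbol{r}}\otimes(F''_{\boldsymbol{r}}\prec G)+F'_{\boldsymbol{r}}G'\otimes(F''_{\boldsymbol{r}}\prec G'')$, identify them with the admissible cuts of $F\prec G$ keeping $roo_{\boldsymbol{r}}$ in the trunk, and prove the coassociativity-type identity exactly as in Proposition \ref{deltasucc}. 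Your explicit classification of the cuts $(\boldsymbol{v}',\boldsymbol{v}'')$ is in fact slightly more detailed than what the paper writes out.
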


\begin{proof}
Comme dans la preuve de la proposition \ref{deltasucc}, on utilise les notations suivantes: si $ a \in \mathcal{B} $, $ \Delta (a) = a \otimes 1 + 1 \otimes a + a' \otimes a'' $, et si $ a \in (\mathcal{B})_{+} $, $ \Delta_{\boldsymbol{r}} (a) = 1 \otimes a + a'_{\boldsymbol{r}} \otimes a''_{\boldsymbol{r}} $. Alors, pour tout $ x \in (\mathcal{B})_{+} $ et $ y \in \mathcal{B} $,
\begin{eqnarray*}
\Delta_{\boldsymbol{r}} (x) \prec \Delta(y) & = & (1 \otimes x + x'_{\boldsymbol{r}} \otimes x''_{\boldsymbol{r}}) \prec (y \otimes 1 + 1 \otimes y + y' \otimes y'') \\
& = & y \otimes (x \prec 1) + 1 \otimes (x \prec y) + y' \otimes (x \prec y'') + x'_{\boldsymbol{r}} y \otimes (x''_{\boldsymbol{r}} \prec 1)\\
& \hspace{0.5cm} & + x'_{\boldsymbol{r}} \otimes (x''_{\boldsymbol{r}} \prec y) + x'_{\boldsymbol{r}} y' \otimes (x''_{\boldsymbol{r}} \prec y'') \\
& = & y \otimes x + 1 \otimes (x \prec y) + y' \otimes (x \prec y'') + x'_{\boldsymbol{r}} y \otimes x''_{\boldsymbol{r}} + x'_{\boldsymbol{r}} \otimes (x''_{\boldsymbol{r}} \prec y)\\
& \hspace{0.5cm} & + x'_{\boldsymbol{r}} y' \otimes (x''_{\boldsymbol{r}} \prec y'') \\
& = & \Delta_{\boldsymbol{r}} (x \prec y),
\end{eqnarray*}
en utilisant (\ref{tens4}) à la deuxième égalité et (\ref{tens3}) à la troisième.\\

Pour la deuxième égalité, la démonstration est identique à celle de la proposition \ref{deltasucc}.
\end{proof}

\subsubsection{Algèbre bigreffe}

{\bf Remarque.} {Pour toute forêt $ F,G,H \in (\mathcal{B})_{+} $,
\begin{equation} \label{bigreffe}
(F \succ G) \prec H = F \succ (G \prec H).
\end{equation}
Cette relation est encore vraie si $ F, G $ ou $ H $ est égal à $ 1 $. Et elle n'a pas de sens si au moins deux des trois éléments $ F,G,H $ sont égaux à $ 1 $, car $ 1 \succ 1 $ et $ 1 \prec 1 $ ne sont pas définis.}\\

Ceci nous amène à introduire la notion d'algèbre bigreffe:

\begin{defi}
Une algèbre bigreffe est un espace vectoriel $ A $ muni de trois opérations $ \ast $, $ \succ $ et $ \prec $ telles que $ (A, \ast , \succ) $ est une algèbre de greffes à gauche, $ (A, \ast , \prec) $ est une algèbre de greffes à droite, et pour tout $ x,y,z \in A $,
\begin{eqnarray*}
(x \succ y) \prec z = x \succ (y \prec z).
\end{eqnarray*}
Une algèbre bigreffe unitaire $ A $ est un espace vectoriel $ A = \mathbb{K}1 \oplus \overline{A} $ tel que $ \overline{A} $ est une algèbre bigreffe, $ (A, \ast , \succ) $ est une algèbre de greffes à gauche unitaire d'unité $ 1 $ et $ (A, \ast , \prec) $ est une algèbre de greffes à droite unitaire d'unité $ 1 $.
\end{defi}

Ainsi, avec (\ref{bigreffe}), $ \mathcal{B} $ munit de la concaténation, de $ \succ $ et de $ \prec $ est une algèbre bigreffe unitaire, en prenant $ \overline{\mathcal{B}} = (\mathcal{B})_{+} $, et pour $ 1 $ l'arbre vide.

\begin{theo} \label{bigreffelibre}
$ (\mathcal{B})_{+} $ est engendrée comme algèbre bigreffe par l'élément $ \tdun{$1$} $.
\end{theo}

\begin{proof}
Il suffit de montrer que tout les arbres appartenant à $ \mathcal{T} $ peuvent être construit à partir de $ \tdun{$1$} $ avec les opérations $ m $, $ \succ $ et $ \prec $. Raisonnons par récurrence sur le degré, le résultat étant évidement vrai au rang $ 1 $. Soit $ T \in \mathcal{T} $ de degré $ n \geq 2 $. Il y a deux cas:
\begin{enumerate}
\item Si $ T = B^{-}(T_{1}, \hdots , T_{k}) $, avec $ T_{1}, \hdots , T_{k} \in \mathcal{T} $ de degré $ < n $ et $ k \geq 1 $. Par hypothèse de récurrence, $ T_{1}, \hdots , T_{k} $ peuvent être construits à partir de $ \tdun{$1$} $ avec les opérations $ m $, $ \succ $ et $ \prec $. Alors $ T = (T_{1} \hdots T_{k}) \succ \tdun{$1$} $ peut aussi être construit dans ce cas à partir de $ \tdun{$1$} $ avec $ m $, $ \succ $ et $ \prec $.
\item Si $ T = B^{+}(T_{1}, \hdots , T_{k}) $, avec $ T_{1}, \hdots , T_{k} \in \mathcal{T} $ de degré $ < n $ et $ k \geq 1 $. Toujours par récurrence, $ T_{1}, \hdots , T_{k} $ peuvent être construits à partir de $ \tdun{$1$} $ avec $ m $, $ \succ $ et $ \prec $. Si $ k = 1 $, $ T = B^{+}(T_{1}) = T_{1} \prec \tdun{$1$} $ et le résultat est démontré. Sinon
$$ T = T_{1} \prec B^{-}(T_{2}, \hdots , T_{k}) = T_{1} \prec ((T_{2} \hdots T_{k}) \succ \tdun{$1$}) $$
et $ T $ peut ici encore être construit à partir de $ \tdun{$1$} $ avec $ m $, $ \succ $ et $ \prec $.
\end{enumerate}
Le principe de récurrence permet de conclure.
\end{proof}
\\

{\bf Remarque.} {
\begin{enumerate}
\item $ (\mathcal{B})_{+} $ n'est pas librement engendrée comme algèbre bigreffe par l'élément $ \tdun{$1$} $. Par exemple,
$$ \tdun{$1$} \prec (\tdun{$1$} \prec \tdun{$1$}) = \tdtroisdeux{1}{3}{2} = \tdun{$1$} \prec (\tdun{$1$} \succ \tdun{$1$}) .$$
\item Une description de l'algèbre bigreffe libre engendrée par un générateur et de l'opérade bigreffe associée seront faites dans un autre article.
\end{enumerate}}

\end{document}